\newtheorem{theorem}{Theorem}[section]
\newtheorem{conjecture}[theorem]{Conjecture}
\newtheorem{lemma}[theorem]{Lemma}
\newtheorem{proposition}[theorem]{Proposition}
\newtheorem{corollary}[theorem]{Corollary}
\newtheorem{question}[theorem]{Question}
\theoremstyle{definition}
\newtheorem{definition}[theorem]{Definition}
\newtheorem{remark}[theorem]{Remark}
\newtheorem{example}[theorem]{Example}
\newtheorem{notation}[theorem]{Notation}
\renewcommand{\Bbb}{\mathbb}
\newcommand{\LK}{\mathcal{L}_{\textup{Dehn}}}
\newcommand{\LS}{\mathcal{L}_{\pi_1}}
\newcommand{\LB}{\mathcal{L}_{b_1}}
\begin{document}

\title{On lens spaces bounding smooth 4-manifolds with $\boldsymbol{b_2=1}$}

\author{Woohyeok Jo}
\address{Department of Mathematical Sciences, Seoul National University, Seoul 08826,
 Republic of Korea}
\email{koko1681@snu.ac.kr}

\author{Jongil Park}
\address{Department of Mathematical Sciences and Research Institute of Mathematics, Seoul National University, Seoul 08826, Republic of Korea}
\email{jipark@snu.ac.kr}

\author{Kyungbae Park}
\address{Department of Mathematics, Kangwon National University, Kangwon 24341,
 Republic of Korea}
\email{kyungbaepark@kangwon.ac.kr}

\thanks{}
\subjclass[2020]{57K30, 57K41, 14J17}

\keywords{lens spaces, Donaldson's diagonalization theorem, rational homology projective planes}
\date{October 30, 2024; revised on November 12, 2024}

\begin{abstract}
    We study which lens spaces can bound smooth 4-manifolds with second Betti number one under various topological conditions. Specifically, we show that there are infinite families of lens spaces that bound compact, simply-connected, smooth 4-manifolds with second Betti number one, yet cannot bound a 4-manifold consisting of a single 0-handle and 2-handle. Additionally, we establish the existence of infinite families of lens spaces that bound compact, smooth 4-manifolds with first Betti number zero and second Betti number one, but cannot bound simply-connected 4-manifolds with second Betti number one. The construction of such 4-manifolds with lens space boundaries is motivated by the study of rational homology projective planes with cyclic quotient singularities.
\end{abstract}

\maketitle

\section{Introduction}
In this paper, all manifolds are assumed to be compact and oriented unless stated otherwise. For relatively prime integers $p>q>0$, the lens space $L(p,q)$ is the $3$-manifold obtained from $S^3$ by $-p/q$-surgery along the unknot. An interesting topic in low-dimensional topology is determining which lens spaces can bound a smooth 4-manifold with specific topological properties.

For $4$-manifolds with minimal Betti numbers, a remarkable result by Lisca \cite{Lisca-2007} provides a complete classification of lens spaces that can bound a rational homology 4-ball (a smooth 4-manifold whose homology groups with rational coefficients are isomorphic to those of the 4-ball, or equivalently, with vanishing first and second Betti numbers, i.e., $b_1=b_2=0$). Interestingly, Lisca's classification results in the confirmation of the \emph{slice-ribbon conjecture}, a well-known open problem in knot theory, for 2-bridge knots. 

As a next step, one may consider lens spaces that bound a smooth 4-manifold with $b_1=0$ and $b_2=1$. (Note that the first and third Betti numbers, $b_1$ and $b_3$, of the 4-manifold can always be reduced to zero without changing the boundary. See \cite[Corollary 5.3.5]{Gompf-Stipsicz-1999}.) The simplest such 4-manifolds, in terms of handle decomposition, are those consisting of a single $0$- and $2$-handle. By reversing the orientation if necessary, we may restrict our attention to $4$-manifolds with positive definite intersection forms, i.e., $b_2=b_2^+=1$.

\begin{question}\label{question:lens_space_realization} Which lens spaces can bound a compact, oriented, smooth $4$-manifold $W$ with $b_2(W)=b_2^+(W)=1$ which is built from a single $0$- and $2$-handle? 
\end{question}

Question \ref{question:lens_space_realization} is equivalent to the \textit{lens space realization problem} \cite{Greene-2013}, which asks which lens spaces can arise from a positive integer surgery along a knot in $S^3$. By definition, a \textit{lens space knot} is a knot in $S^3$ that admits such a surgery. Certain families of lens space knots were listed by Berge \cite{Berge-2018}, and the \textit{Berge conjecture} posits that these constitute the complete list of lens spaces knots. Greene proved that if a lens space is obtained by a positive integer surgery along a knot in $S^3$, then it can also be obtained from a positive integer surgery along one of Berge's knots. This result resolves the lens space realization problem and confirms `half' of the Berge conjecture; see Section \ref{subsec:lens_space_realization} for further details.

The following two generalizations of Question \ref{question:lens_space_realization} are discussed in \cite[Section 1.6]{Greene-2013}.

\begin{question}\label{question:filling} Which lens spaces can bound a compact, oriented, smooth $4$-manifold $W$ with $b_2(W)=b_2^+(W)=1$ and $\pi_1(W)=1$?  
\end{question}

\begin{question}\label{question:filling_2} Which lens spaces can bound a compact, oriented, smooth $4$-manifold $W$ with $b_2(W)=b_2^+(W)=1$ and $b_1(W)=0$?  
\end{question}
We remark that if lens spaces are replaced by connected sums of lens spaces in Question \ref{question:filling}, the question becomes closely related to the Montgomery-Yang problem (see \cite{Kollar-2008, Jo-Park-Park-2024}). We also note that the answer to Question \ref{question:filling} is known in the topological category. For relatively prime integers $p>q>0$, the lens space $L(p,q)$ bounds a compact, oriented, simply-connected, \textit{topological} $4$-manifold with $b_2=b_2^+=1$ if and only if $-q$ is a quadratic residue modulo $p$ \cite{Boyer-1986}.

For convenience, let us define collections of lens spaces as follows. Let $\LK$ denote the collection of all lens spaces that bound a smooth $4$-manifold with $b_2=b_2^+=1$ built from a single $0$- and $2$-handle (Question \ref{question:lens_space_realization}). And let $\LS$ denote the collection of all lens spaces that bound a smooth $4$-manifold with $b_2=b_2^+=1$ and $\pi_1=1$ (Question \ref{question:filling}). Finally, let $\LB$ denote the collection of all lens spaces that bound a smooth $4$-manifold with $b_2=b_2^+=1$ and $b_1=0$ (Question \ref{question:filling_2}). Observe that we have the inclusions $\LK \subset \LS \subset \LB$, and the collection $\LK$ is completely classified by Greene.

As noted in \cite[Section 1.6]{Greene-2013}, the lens space $L(17,2)$ serves as an example of a lens space in $\LS \setminus \LK$, demonstrated as follows: Tange established the existence of a positive definite $2$-handle cobordism from the Brieskorn homology sphere $\Sigma(2,5,7)$ to $L(17,2)$ \cite[Section 1.3]{Tange-2018}. It is well known that $\Sigma(2,5,7)$ bounds a contractible 4-manifold \cite{Akbulut-Kirby-1979, Casson-Harer-1981}. By attaching this contractible 4-manifold to the 2-handle cobordism along $\Sigma(2,5,7)$, he obtained a simply-connected 4-manifold with boundary $L(17,2)$. However, the lens space $L(17,2)$ does not belong to $\LK$ by Greene's classification. Additional candidates expected to lie in $\LS \setminus \LK$ are provided in \cite[Proposition 1.15]{Tange-2018} through similar reasoning. In \cite{Ballinger-2022}, Ballinger also presented an infinite family of lens spaces in $\LS$ and asserted that this family is not contained in $\LK$. His approach involved finding embeddings of linear plumbed $4$-manifolds of length $n-1$ in $\#n\mathbb{CP}^2$; the complement of such an embedding is a smooth $4$-manifold with $b_2=b_2^+=1$, with a lens space boundary, and is simply-connected under certain conditions. We will explore these families further in Section \ref{appendix}.

The goal of this article is to present infinitely many examples of lens spaces in $\LS\setminus \LK$ and $\LB\setminus \LS$, respectively. To construct 4-manifolds with $b_1=0$ and $b_2=b_2^+=1$ bounded by a lens space, we employ a different method from those described above. In fact, as we discuss in Section \ref{appendix}, some of expected families from Tange cannot be obtained using our approach. Our construction is inspired by the study of rational homology projective planes (normal projective complex surfaces whose Betti numbers match those of the complex projective plane $\mathbb{CP}^2$) with quotient singularities in algebraic surface theory. In the works of the authors \cite{Jo-Park-Park-2024, Jo-Park-Park-2024-02}, we utilized the study of topological or smooth 4-manifolds to address classification problems of rational homology projective planes. In this paper, however, we use rational homology projective planes to resolve problems questioned in low-dimensional topology.

Specifically, to show that a lens space $L(p,q)$ belongs to $\LB$ or $\LS$, we explicitly construct a rational homology projective plane with a unique cyclic singularity of type $(p,p-q)$ (see Section \ref{subsec:qhcp2} for details). This singularity admits a neighborhood that is homeomorphic to the cone on the lens space $L(p,p-q)$, and the complement of the cone neighborhood is a smooth 4-manifold with $b_1=0$, $b_2=b_2^+=1$, and boundary $L(p,q)$. The first homology group and the fundamental group of the resulting $4$-manifold can be computed through a relatively simple calculation.

Our first result is to present the following two-parameter infinite family of lens spaces contained in $\LS\setminus \LK$. To show that these lens spaces are obstructed from being in $\LK$, we use Greene's argument on lattice embeddings and the concept of \textit{changemaker vectors} (Definition \ref{def:changemaker}).

\begin{theorem}\label{thm:filling2} For nonnegative integers $m,n\geq 0$, let $p_{m,n}>q_{m,n}>0$ be relatively prime integers determined by the continued fraction
\[
\frac{p_{m,n}}{p_{m,n}-q_{m,n}}=\left[2,2,2,n+2,m,\left[2\right]^m\right].
\]
Then, for each $m\geq 8$ and $n\geq 7$ with $m$ odd, the lens space $L(p_{m,n},q_{m,n})$ bounds a compact, oriented, smooth $4$-manifold with $\pi_1=1$ and $b_2=b_2^+=1$, but does not bound such a $4$-manifold built from a single $0$- and $2$-handle.
\end{theorem}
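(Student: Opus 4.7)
The proof decomposes into two independent parts: showing $L(p_{m,n}, q_{m,n}) \in \LS$, and showing $L(p_{m,n}, q_{m,n}) \notin \LK$. For the existence half, I would follow the strategy announced in the introduction and construct, for each admissible $(m,n)$, a simply-connected $\mathbb{Q}$-homology $\mathbb{CP}^2$ $X_{m,n}$ whose unique singularity is a cyclic quotient of type $(p_{m,n}, p_{m,n}-q_{m,n})$, so that its link is $L(p_{m,n}, p_{m,n}-q_{m,n})$. The natural candidates are obtained by contracting a simply-connected smooth projective rational surface along a negative-definite linear chain of smooth rational curves whose self-intersection sequence is $-2,-2,-2,-(n+2),-m,-2,\ldots,-2$ (with $m$ trailing $-2$'s), matching the given continued fraction. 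Removing an open conical neighborhood of the singular point then produces a compact smooth $4$-manifold $W_{m,n}$ with $\partial W_{m,n} \cong L(p_{m,n}, q_{m,n})$ after reversing the orientation of the boundary lens space. Since $X_{m,n}$ is a rational homology $\mathbb{CP}^2$, Mayer--Vietoris gives $b_1(W_{m,n}) = 0$ and $b_2(W_{m,n}) = 1$; the K\"ahler orientation of $X_{m,n}$ makes the intersection form on $W_{m,n}$ positive definite; and van Kampen yields $\pi_1(W_{m,n}) = 1$ from the simple-connectedness of $X_{m,n}$, since the meridian of the singular point bounds a disk in $X_{m,n}$.

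For the obstruction half, I would apply Greene's solution to the lens space realization problem: if $L(p,q) \in \LK$, then the linear lattice associated to the continued fraction expansion of $p/(p-q)$ must embed isometrically into $\mathbb{Z}^{s+1}$ as the orthogonal complement of a changemaker vector $\sigma = (\sigma_0 \leq \sigma_1 \leq \cdots \leq \sigma_s)$. Assuming for contradiction that such an embedding exists for the continued fraction $[2,2,2,n+2,m,[2]^m]$, each basis vector of the linear lattice is strongly constrained by its self-pairing: the $m+3$ weight-$2$ generators must each map to a difference of two standard basis vectors, the weight-$m$ generator to a signed sum of $m$ distinct standard basis vectors, and the weight-$(n+2)$ generator to a signed sum of $n+2$ distinct standard basis vectors. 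Combined with the tridiagonal adjacency pattern of the dual graph (consecutive generators pair to $\pm 1$, all others to $0$) and the changemaker inequalities on $\sigma$, these constraints admit only finitely many combinatorial configurations up to relabeling of the coordinates of $\mathbb{Z}^{s+1}$. I would enumerate these configurations through the support patterns of the two high-norm generators and show that for $m \geq 8$ odd and $n \geq 7$ every case forces a violation of the changemaker inequality or of one of the orthogonality relations.

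The principal obstacle I anticipate is the combinatorial enumeration in the obstruction step, which must be uniform in both $m$ and $n$. I expect the bookkeeping to bifurcate based on how the support of the weight-$m$ vertex overlaps with that of the weight-$(n+2)$ vertex and with the indices used by the adjacent weight-$2$ generators; the parity hypothesis on $m$ will likely enter when forcing a symmetric splitting of the weight-$m$ support across its two neighbors, while the hypothesis $n \geq 7$ provides enough room for a pigeonhole-type estimate. The existence half, by contrast, should reduce to a single parametric construction once the correct initial rational surface and sequence of blow-ups are identified, making it the more routine of the two halves to execute in detail.
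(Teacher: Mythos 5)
Your proposal correctly identifies the overall two-part structure and the broad tools (a $\mathbb{Q}$-homology $\mathbb{CP}^2$ construction for existence, Greene's changemaker criterion for obstruction), but both halves contain substantive gaps.

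The fatal issue is in the existence half. Van Kampen applied to $X_{m,n} = W_{m,n} \cup_{L} C$ (with $C$ the cone on $L = L(p,p-q)$) gives $\pi_1(X_{m,n}) \cong \pi_1(W_{m,n}) / N$, where $N$ is the normal closure of the image of $\pi_1(L) \cong \mathbb{Z}_p$. Thus $\pi_1(X_{m,n}) = 1$ shows only that $\pi_1(W_{m,n})$ is normally generated by the image of the peripheral group; it certainly does not force $\pi_1(W_{m,n}) = 1$. Indeed, the very same construction applied to the family in the paper's Theorem \ref{thm:filling1} produces $\pi_1(W_{m,n}) \cong \mathbb{Z}_{m+2} \neq 1$ even though the ambient surface is a simply-connected rational surface. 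The paper's actual argument (Proposition \ref{prop:construction2}) is genuinely different: one observes that the smooth locus $S^0_{m,n}$ contains $\Sigma_n \setminus (\text{section} + 2\ \text{fibers}) \cong \mathbb{C}^* \times \mathbb{C}$ as a Zariski-open subset, so $\pi_1(S^0_{m,n})$ is a quotient of $\mathbb{Z}$ and hence abelian, reducing the problem to computing $H_1$ via a Picard-group calculation. That calculation shows $H_1 = 0$ exactly when $m$ is odd and $H_1 = \mathbb{Z}_2$ when $m$ is even; this is where the parity hypothesis enters, not (primarily) in the obstruction step as you suggest.

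The obstruction half also has problems, though less severe. First, Greene's changemaker theorem (Theorem \ref{thm:changemaker}) concerns the lattice $Q_{X(p,q)}$ built from the continued fraction of $p/q$, not $p/(p-q)$; for this family $p/q = [5,[2]^{n-1},3,[2]^{m-3},m+2]$, a different linear chain from the one you propose to embed. Second, your assertion that a basis vector of self-intersection $-m$ (or $-(n+2)$) must map to a signed sum of exactly $m$ (or $n+2$) distinct standard basis vectors is false: a vector of norm $-k$ in $-\mathbb{Z}^N$ may have coefficients of absolute value greater than $1$. This is not a minor bookkeeping point — the paper's embedding analysis in Proposition \ref{prop:changemaker_application} explicitly produces a second family of candidate embeddings in which the image of the weight-$(m+2)$ vertex has coefficients $2d$, $2d$, and $d$ for an integer $d$ with $(4n+5)d^2 = m+1$, and one must separately rule out this configuration as generating a changemaker complement. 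A rigid enumeration under your structural hypothesis would miss this case entirely.
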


Using a similar construction, we also obtained a two-parameter infinite family of lens spaces contained in $\LB\setminus\LS$. For these lens spaces, we obstruct their inclusion in $\LS$ through a detailed lattice embedding argument based on Donaldson's diagonalization theorem (Corollary \ref{cor:orthogonal_complement}).

\begin{theorem}\label{thm:filling1} For nonnegative integers $m,n\geq 0$, let $p_{m,n}>q_{m,n}>0$ be relatively prime integers determined by the continued fraction
\[
    \frac{p_{m,n}}{p_{m,n}-q_{m,n}}=\left[m+2, \left[2\right]^{m+2}, n+2, \left[2\right]^{m+2}, m+2\right].
\]
Then, for each $m\geq 6$ and $n\geq 0$ with $n\neq 2,4,6$, the lens space $L(p_{m,n},q_{m,n})$ bounds a compact, oriented, smooth $4$-manifold with $b_1=0$ and $b_2=b_2^+=1$, but does not bound such a $4$-manifold with $\pi_1=1$. 
\end{theorem}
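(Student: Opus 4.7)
The proof splits into two independent parts: a construction of a smooth filling of $L(p_{m,n},q_{m,n})$ with $b_1=0$ and $b_2=b_2^+=1$, and an obstruction ruling out any simply-connected such filling.

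For the construction, I would follow the strategy explained in the introduction and realize $L(p_{m,n},q_{m,n})$ as the link of a cyclic quotient singularity sitting inside a rational homology projective plane. Concretely, I would start from $\mathbb{CP}^2$ (or an appropriate birational model), perform a sequence of blow-ups along a carefully chosen configuration of curves so that a linear chain of rational curves of self-intersections $-(m+2),-2,\ldots,-2,-(n+2),-2,\ldots,-2,-(m+2)$ appears, and then contract this chain to produce a normal projective surface $Y_{m,n}$ with a single cyclic quotient singularity of type $(p_{m,n},p_{m,n}-q_{m,n})$. That the rational Betti numbers of $Y_{m,n}$ agree with those of $\mathbb{CP}^2$ reduces to an Euler characteristic count on the blown-up surface and the contracted chain. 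Removing an open cone neighborhood of the singular point then yields a compact smooth 4-manifold with $b_1=0$, $b_2=b_2^+=1$, and boundary $L(p_{m,n},q_{m,n})$. Analogous explicit constructions appear in the authors' earlier papers \cite{Jo-Park-Park-2024, Jo-Park-Park-2024-02}, which I would adapt to the present setting.

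For the obstruction, suppose for contradiction that $L(p_{m,n},q_{m,n})$ bounds a simply-connected smooth 4-manifold $W$ with $b_2(W)=b_2^+(W)=1$. Let $P$ denote the negative definite linear plumbing with vertex weights $-(m+2), -2, \ldots, -2, -(n+2), -2, \ldots, -2, -(m+2)$; its boundary is $L(p_{m,n}, p_{m,n}-q_{m,n})$. Gluing $W$ to $P$ along the common boundary produces a closed, smooth, simply-connected 4-manifold $X$ of rank $2m+8$ with $b_2^+(X)=1$. Applying Corollary \ref{cor:orthogonal_complement}, which combines Donaldson's diagonalization theorem with an orthogonal complement argument, then yields an isometric lattice embedding of the plumbing lattice $\Lambda_P$ into a suitable diagonal lattice whose orthogonal complement is rank one.

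The heart of the proof is showing that, for the given range $m\geq 6$ and $n\notin\{2,4,6\}$, no such embedding exists. I would exploit the bamboo structure of the plumbing: consecutive generators $v_i, v_{i+1}$ must share exactly one diagonal coordinate, with sign chosen so that $v_i\cdot v_{i+1}=+1$, while non-adjacent ones must be orthogonal. The two long runs of $(-2)$-weights force the corresponding $v_i$ to be short vectors of the form $\pm(e_j-e_k)$, while the end vectors of weight $-(m+2)$ consume many auxiliary coordinates; the hypothesis $m\geq 6$ provides enough combinatorial room to rule out degenerate embeddings that would collapse the two halves of the bamboo into each other. Propagating the constraints from the two symmetric ends inward toward the central vector of weight $-(n+2)$ should then reduce the question to a short arithmetic compatibility condition on $n$ that fails exactly when $n\notin\{2,4,6\}$. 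The principal obstacle lies in this final combinatorial step: with $2m+7$ plumbing generators to place in the target diagonal lattice and many a priori admissible assignments, a careful, largely inductive case analysis in the spirit of Greene's lattice-embedding techniques and of the companion paper will be needed to eliminate them all.
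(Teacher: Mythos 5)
Your high-level strategy (construct a filling via a $\mathbb{Q}$-homology $\mathbb{CP}^2$ with a single cyclic singularity, then rule out simply-connected fillings by a Donaldson-style lattice obstruction) matches the paper's. However, your obstruction step has a concrete error that invalidates it. You take $P$ to be the negative definite plumbing whose continued fraction is $\bigl[m+2,[2]^{m+2},n+2,[2]^{m+2},m+2\bigr]$, with $\partial P = L(p_{m,n},p_{m,n}-q_{m,n})$, and glue it to a hypothetical simply-connected filling $W$ of $L(p_{m,n},q_{m,n})$. Since $\partial W = L(p_{m,n},q_{m,n}) = -\partial P$, this does produce a closed manifold $X = W\cup_\partial P$, but $W$ has $b_2^+(W)=1$ while $P$ is negative definite, so $X$ is \emph{indefinite} with $b_2^+(X)=1$, $b_2^-(X)=2m+7$. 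Donaldson's diagonalization theorem (and hence Corollary~\ref{cor:orthogonal_complement}) applies only to definite closed $4$-manifolds, so no lattice embedding of $\Lambda_P$ into a diagonal lattice is forthcoming from this gluing. The paper instead uses the \emph{other} plumbing $X(p_{m,n},q_{m,n})$, corresponding to the continued fraction of $p_{m,n}/q_{m,n}$ (which has weights $[2]^m,m+5,[2]^{n-1},m+5,[2]^m$ for $n\ge 1$), reverses the orientation of $W$, and forms $(-W)\cup_\partial X(p_{m,n},q_{m,n})$, which is negative definite of rank $2m+n+2$. This is the configuration to which Donaldson applies and whose embeddings must be analyzed.

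Beyond the choice of plumbing, two further points need tightening. First, you do not specify what the final contradiction is; in the paper the embedding is shown to be essentially unique, and the contradiction is that the generator of the rank-one orthogonal complement has square unequal to $-p_{m,n}$, in conflict with Corollary~\ref{cor:linking_form}. Second, your description of the role of $n\notin\{2,4,6\}$ is backwards in spirit: the condition is not an ``arithmetic compatibility that fails exactly when $n\notin\{2,4,6\}$'' but rather a restriction used in the case analysis of the Diophantine constraints on the coefficients of the images of the two $-(m+5)$-weighted vertices, needed to rule out extraneous solutions when $n$ is one of $2,4,6$. On the construction side, the paper blows up the Hirzebruch surface $\Sigma_n$ (not $\mathbb{CP}^2$) and verifies directly that $\pi_1(W_{m,n})\cong\mathbb{Z}_{m+2}$; this fundamental-group computation is what makes the example genuinely in $\LB\setminus\LS$, and your sketch leaves it entirely implicit.
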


\begin{remark}
    In \cite[Section 1.6]{Greene-2013}, Greene proposed that $L(10,1)$ is a lens space in $\LB\setminus \LS$; however, the argument provided does not establish this. To support the inclusion of $L(10,1)$ in $\LB$, he considered the union of a positive definite single 2-handle cobordism from $L(10,1)$ to the Brieskorn sphere $\Sigma(2,3,7)$ and a rational homology 4-ball bounded by $\Sigma(2,3,7)$. However, the boundary of this union is $-L(10,1)=L(10,9)$, rather than $L(10,1)$.

    Nonetheless, it is known that there exists a rational homology projective plane with a unique singularity of type $A_9$ (i.e., a cyclic singularity of type $(10,9)$) \cite{Hwang-Keum-Ohashi-2015}, which confirms that $L(10,1)$ is indeed in $\LB$.

    To show that $L(10,1)\notin \LS$, Greene applied a result of Kervaire and Milnor \cite{KM-1961}. Alternatively, one can employ Heegaard Floer $d$-invariants to establish that $L(10,1)\notin \LS$; if such a simply connected 4-manifold exists, then it must be a spin smooth 4-manifold with $b_2=b_2^+=1$ and boundary $L(10,1)$, contradicting that the $d$-invariants for the two spin structures of $L(10,1)$ are $1/4$ and $-9/4$; see \cite[Section 2.3.3]{Jo-Park-Park-2024}. 
\end{remark}

\begin{remark} There are many lens spaces that are not contained in $\LB$. For example, for $n>1$, the lens space $L(n,1)$ is contained in $\LB$ only if $n$ can be expressed as a sum of two squares, as shown through an application of Donaldson's diagonalization theorem (Theorem \ref{thm:Donaldson}). It is well known that a positive integer $n>1$ can be written as a sum of two squares if and only if, in its prime factorization, no prime $p$ such that $p\equiv 3 \mod{4}$ appears with odd multiplicity.

In general, determining the minimal second Betti number of definite fillings for a given lens space is a challenging problem (see \cite{AMP-2022}).
\end{remark}

One may also consider the collection $\mathcal{L}_{H_1}$ of lens spaces that bound a smooth $4$-manifold with $b_2=b_2^+=1$ and $H_1=0$. (Here $H_1$ denotes the first homology group with integer coefficients.) It is clear that $\LS \subset \mathcal{L}_{H_1}\subset \LB$. As will be shown below, the inclusion $\mathcal{L}_{H_1}\subset \LB$ is proper. However, determining whether the inclusion $\LS\subset\mathcal{L}_{H_1}$ is proper is expected to be quite daunting, due to the lack of known obstructions for satisfying the $\pi_1=1$ condition when the lens space bounds a smooth 4-manifold with $H_1=0$.

\begin{question}
    Is there a lens space that bounds a compact, oriented, smooth $4$-manifold $W$ with $b_2(W)=b_2^+(W)=1$ and $H_1(W;\mathbb{Z})=0$ but not such a $4$-manifold with $\pi_1=1$?
\end{question}

\subsection*{Acknowledgements} The authors thank all members of the 4-manifold topology group at Seoul National University (SNU) for their invaluable comments throughout this work. We also express our gratitude to Joshua Greene for posing the problem that motivated this work and bringing Ballinger's result to our attention. Jongil Park was supported by the National Research Foundation of Korea (NRF) grant funded by the Korean government (No.2020R1A5A1016126 and RS-2024-00392067). He is also affiliated with the Research Institute of Mathematics at SNU. Kyungbae Park was supported by NRF grant funded by the Korean government (No.2021R1A4A3033098 and No.2022R1F1A1071673).

\section{Preliminaries}\label{sec:preliminaries}
In this section, we briefly review some background knowledge on the main obstruction and construction relevant to this article. For relatively prime integers $p>q>0$, the lens space $L(p,q)$ is the oriented 3-manifold obtained by $-p/q$-surgery along the unknot in $S^3$.  Expand $p/q$ into its uniquely determined Hirzebruch-Jung continued fraction as follows:
\[ 
\frac{p}{q}=[a_1,\dots,a_\ell]:=a_1-\frac{1}{a_2-\displaystyle\frac{1}{\cdots-\displaystyle\frac{1}{a_\ell}}} \ ~~(a_i\geq 2).
\]
It is well-known that $L(p,q)$ is the boundary of the negative definite plumbed 4-manifold $X(p,q)$ constructed from the linear graph in Figure \ref{fig:plumbing_graph_of_X(p,q)} (see \cite[Exercise 5.3.9(b)]{Gompf-Stipsicz-1999}). 
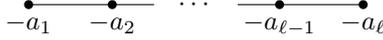
\begin{figure}[!th]
\centering
\begin{tikzpicture}[scale=1.1]
\draw (-2,0) node[circle, fill, inner sep=1.2pt, black]{};
\draw (-1,0) node[circle, fill, inner sep=1.2pt, black]{};
\draw (1,0) node[circle, fill, inner sep=1.2pt, black]{};
\draw (2,0) node[circle, fill, inner sep=1.2pt, black]{};

\draw (-2,0) node[below]{$-a_1$};
\draw (-1,0) node[below]{$-a_2$};
\draw (1,0) node[below]{$-a_{\ell-1}$};
\draw (2,0) node[below]{$-a_\ell$};

\draw (0,0) node{$\cdots$};

\draw (-2,0)--(-1,0) (-1,0)--(-0.5,0) (0.5,0)--(1,0)  (1,0)--(2,0) ;
\end{tikzpicture}
\caption{The plumbing graph of $X(p,q)$.}
\label{fig:plumbing_graph_of_X(p,q)}
\end{figure}

Since $L(p,p-q)$ is homeomorphic to the orientation reversal $-L(p,q)$ of $L(p,q)$, we deduce that $-L(p,q)$ is the boundary of the negative definite 4-manifold $X(p,p-q)$.

The following lemma is well-known, see \cite[Exercise 5.3.13(f),(g)]{Gompf-Stipsicz-1999} for example.

\begin{lemma}\label{lem:linking_form} Let $Y$ be a rational homology $3$-sphere, and $W$ a compact, oriented, topological $4$-manifold with $H_1(W;\Bbb Z)=0$ and $\partial W=Y$. If $A$ is any matrix for the intersection form of $W$, then $H_1(Y;\Bbb Z)$ is isomorphic to the cokernel of $A\colon\Bbb Z^{b_2(W)}\to \Bbb Z^{b_2(W)}$.
\end{lemma}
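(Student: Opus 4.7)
The plan is to extract the short exact sequence $0 \to H_2(W) \to H_2(W)^{*} \to H_1(Y) \to 0$ (with the first map being the intersection form) from the long exact sequence of the pair $(W,Y)$ combined with Poincar\'e--Lefschetz duality, and then identify the first map with $A$.

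First, I would write down the relevant portion of the long exact sequence of the pair $(W,Y)$ in integer coefficients:
\[
H_2(Y;\Bbb Z) \to H_2(W;\Bbb Z) \to H_2(W,Y;\Bbb Z) \to H_1(Y;\Bbb Z) \to H_1(W;\Bbb Z).
\]
The right-hand term vanishes by hypothesis. For the left-hand term, since $Y$ is a rational homology $3$-sphere, $H_1(Y;\Bbb Z)$ is finite torsion, hence so is $H^1(Y;\Bbb Z)\cong \mathrm{Hom}(H_1(Y;\Bbb Z),\Bbb Z)\oplus \mathrm{Ext}(H_0(Y;\Bbb Z),\Bbb Z)=0$, and Poincar\'e duality gives $H_2(Y;\Bbb Z)\cong H^1(Y;\Bbb Z)=0$. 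Therefore the sequence collapses to
\[
0 \to H_2(W;\Bbb Z) \xrightarrow{\;j\;} H_2(W,Y;\Bbb Z) \to H_1(Y;\Bbb Z) \to 0.
\]

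Next, I would apply Poincar\'e--Lefschetz duality $H_2(W,Y;\Bbb Z)\cong H^2(W;\Bbb Z)$, followed by the universal coefficient theorem, which yields
\[
H^2(W;\Bbb Z)\cong \mathrm{Hom}(H_2(W;\Bbb Z),\Bbb Z)\oplus \mathrm{Ext}(H_1(W;\Bbb Z),\Bbb Z).
\]
The Ext summand vanishes by the assumption $H_1(W;\Bbb Z)=0$, and the same assumption together with the exact sequence above forces $H_2(W;\Bbb Z)$ to be torsion-free (it injects into the free group $H_2(W,Y;\Bbb Z)$), so $H_2(W;\Bbb Z)\cong \Bbb Z^{b_2(W)}$. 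In particular both ends of $j$ are free abelian of rank $b_2(W)$.

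The remaining, and really the only substantive, step is to identify the composite
\[
\varphi\colon H_2(W;\Bbb Z)\xrightarrow{j} H_2(W,Y;\Bbb Z)\xrightarrow{\,\mathrm{PD}\,} H^2(W;\Bbb Z)\xrightarrow{\,\mathrm{ev}\,} \mathrm{Hom}(H_2(W;\Bbb Z),\Bbb Z)
\]
with the map induced by the intersection form. Unwinding the definitions, for $\alpha,\beta\in H_2(W;\Bbb Z)$ one has $\varphi(\alpha)(\beta)=\langle \mathrm{PD}(j(\alpha)),\beta\rangle=\alpha\cdot \beta$, which is exactly the intersection pairing. Choosing a basis of $H_2(W;\Bbb Z)\cong \Bbb Z^{b_2(W)}$ and the dual basis of $\mathrm{Hom}(H_2(W;\Bbb Z),\Bbb Z)$, the matrix of $\varphi$ is precisely $A$. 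The short exact sequence then reads
\[
0 \to \Bbb Z^{b_2(W)} \xrightarrow{\;A\;} \Bbb Z^{b_2(W)} \to H_1(Y;\Bbb Z)\to 0,
\]
which identifies $H_1(Y;\Bbb Z)$ with $\mathrm{coker}(A)$, as claimed. The main obstacle is really just the bookkeeping in the last step, making sure the Poincar\'e--Lefschetz and universal coefficient isomorphisms are compatible with the intersection pairing; this is a standard diagram chase and poses no genuine difficulty.
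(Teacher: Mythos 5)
Your argument is correct and is the standard one; note that the paper itself does not prove this lemma but simply cites it to Gompf--Stipsicz, Exercise~5.3.13(f),(g), where essentially the same outline is carried out. You correctly obtain the short exact sequence
\[
0 \to H_2(W;\Bbb Z) \to H_2(W,Y;\Bbb Z) \to H_1(Y;\Bbb Z) \to 0
\]
by killing $H_2(Y)$ and $H_1(W)$ at the two ends, rewrite the middle term via Lefschetz duality and the universal coefficient theorem as $\mathrm{Hom}(H_2(W;\Bbb Z),\Bbb Z)$ (using $H_1(W;\Bbb Z)=0$ to kill the $\mathrm{Ext}$ term), deduce $H_2(W;\Bbb Z)$ is free since it injects into a free group, and identify the resulting map $H_2(W)\to\mathrm{Hom}(H_2(W),\Bbb Z)$ with the adjoint of the intersection pairing, whose matrix in dual bases is $A$. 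The only point worth making explicit, which you flag but compress, is the compatibility of the Lefschetz duality and evaluation isomorphisms with the intersection form; this is the content of the assertion $\varphi(\alpha)(\beta)=\alpha\cdot\beta$ and is indeed standard. One could also phrase the argument using the presentation matrix of the linking form on $H_1(Y;\Bbb Z)$, but that is a cosmetic variant of the same computation, so I have nothing further to compare.
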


In particular, if $Y$ is a lens space, then we have the following corollary.

\begin{corollary}\label{cor:linking_form} Suppose that the lens space $L(p,q)$ bounds a compact, oriented, topological $4$-manifold $W$ with $H_1(W;\mathbb{Z})=0$ and $b_2(W)=b_2^+(W)=1$. Then the intersection form of $W$ is represented by the $1\times 1$ matrix $(p)$.     
\end{corollary}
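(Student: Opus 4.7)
The plan is to apply Lemma \ref{lem:linking_form} directly to the situation at hand, since the hypotheses of that lemma are exactly the ones we have assumed here. Since $b_2(W) = 1$, the intersection form of $W$ is represented by a $1 \times 1$ integer matrix, say $(n)$. I would first use the hypothesis $b_2^+(W) = 1$ to observe that $n > 0$: a positive definite symmetric form on a rank-one lattice is given by a positive integer.

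Next, I would identify $n$ with $p$ by a simple order count. By Lemma \ref{lem:linking_form}, $H_1(L(p,q);\mathbb{Z})$ is isomorphic to the cokernel of the map $\mathbb{Z} \to \mathbb{Z}$ given by multiplication by $n$, which is $\mathbb{Z}/n\mathbb{Z}$. On the other hand, $H_1(L(p,q);\mathbb{Z}) \cong \mathbb{Z}/p\mathbb{Z}$ from the standard surgery description. Comparing orders (both groups are finite cyclic), we conclude $n = p$, and hence the intersection form of $W$ is represented by $(p)$.

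There is no substantial obstacle here; this is a direct corollary of Lemma \ref{lem:linking_form}. The only thing worth stating explicitly is how the positive definiteness hypothesis $b_2^+(W)=1$ rules out the possibility $n = -p$ that would otherwise be consistent with the cokernel calculation.
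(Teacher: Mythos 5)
Your proposal is correct and follows the same line of reasoning as the paper: deduce that the intersection form is $(n)$ with $n>0$ from $b_2=b_2^+=1$, then apply Lemma~\ref{lem:linking_form} to identify $\mathbb{Z}_n \cong H_1(L(p,q);\mathbb{Z}) \cong \mathbb{Z}_p$ and conclude $n=p$. No gaps.
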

\begin{proof} As $b_2(W)=b_2^+(W)=1$, the intersection form of $W$ is represented by $(n)$ for a uniquely determined positive integer $n$. By Lemma \ref{lem:linking_form}, $H_1(\partial Y;\mathbb{Z})=H_1(L(p,q);\mathbb{Z})$ is isomorphic to $\mathbb{Z}_n$, implying that $p=n$. \end{proof}

\subsection{Donaldson's Diagonalization Theorem and Lattice Embedding}
For a compact, oriented $4$-manifold $X$ and its intersection form
\[Q_X\colon H_2(X;\Bbb Z)/\textrm{Tor}\times H_2(X;\Bbb Z)/\textrm{Tor}\to \Bbb Z,\] 
we simply denote the intersection lattice $(H_2(X;\Bbb Z)/\textrm{Tor}, Q_X)$ by $Q_X$. For a positive integer $n$, let $\{e_1,\dots,e_n\}$ be the standard basis for $\Bbb Z^n$. We denote by $-\Bbb Z^n$ the standard negative definite lattice $(\Bbb Z^n, \langle\cdot,\cdot\rangle)$ given by $\langle e_i,e_j\rangle = -\delta_{i,j}$, with $\delta_{i,j}$ being the Kronecker delta. 

Donaldson's diagonalization theorem states that there is a significant constraint on the intersection forms of closed, oriented, smooth, definite $4$-manifolds. 

\begin{theorem}[Donaldson's Diagonalization Theorem, {\cite{Donaldson-1983,Donaldson-1987}}]\label{thm:Donaldson} If the intersection form $Q_X$ of a closed, oriented, smooth $4$-manifold $X$ is negative definite, then $Q_X$ is isomorphic to $-\Bbb Z^n$, where $n=b_2(X)=b_2^-(X)$.     
\end{theorem}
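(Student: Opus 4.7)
My plan is to follow Donaldson's original gauge-theoretic argument. Without loss of generality I would assume $X$ is closed, oriented, smooth, and that $Q_X$ is negative definite of rank $n$, so $b_2^+(X)=0$ and $b_2^-(X)=n$. The goal is then to produce $n$ pairwise orthogonal classes of self-intersection $-1$ in $H^2(X;\mathbb{Z})$, which is equivalent to the claimed isomorphism $Q_X\cong -\mathbb{Z}^n$.

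I would fix a generic Riemannian metric on $X$ together with a principal $SU(2)$-bundle $P\to X$ with $c_2(P)=1$, and consider the moduli space $\mathcal{M}$ of anti-self-dual (ASD) connections on $P$ modulo gauge equivalence. By the Atiyah--Hitchin--Singer index formula, the formal dimension of $\mathcal{M}$ is $8c_2(P)-3(1+b_2^+(X))=5$. For a generic metric, $\mathcal{M}$ is a smooth $5$-manifold away from two carefully controlled loci: the reducible ASD connections, which correspond to orthogonal splittings $P=L\oplus L^{-1}$ and hence to pairs $\{\pm\alpha\}\subset H^2(X;\mathbb{Z})$ with $\alpha\cdot\alpha=-1$; and the non-compact ends of $\mathcal{M}$, where the curvature concentrates at a point of $X$.

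The crucial analytic input — Uhlenbeck's compactness and removable-singularities theorems, combined with Taubes's gluing construction and a Kuranishi model at the reducibles — would give me two local structure results: $\mathcal{M}$ has a collar end diffeomorphic to $X\times(0,\epsilon)$, and near each reducible connection $\mathcal{M}$ looks like an open cone on $\mathbb{CP}^2$. Truncating these ends and cone neighborhoods then produces a compact oriented $5$-dimensional cobordism whose boundary is $X$ together with $r$ copies of $\pm\mathbb{CP}^2$, where $r$ is the number of pairs $\{\pm\alpha\}\subset H^2(X;\mathbb{Z})$ of square $-1$. A signature comparison across this cobordism forces $r\geq n$, and a lattice-theoretic argument then shows that such $\alpha$'s can be arranged as a pairwise orthogonal basis, giving the desired diagonalization.

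The real obstacle is not the signature-and-counting endgame but the gauge-theoretic input itself: establishing that $\mathcal{M}$ is a smooth $5$-manifold with exactly the described collar end and cone singularities requires the full nonlinear analysis of Yang--Mills connections on $4$-manifolds. This analytic core is where the depth of the theorem lives and cannot be bypassed by classical algebraic-topological methods — as witnessed by Freedman's topological $E_8$-manifold, which shows that the conclusion is false without the smoothness hypothesis, so any proof must genuinely engage the PDE side of gauge theory.
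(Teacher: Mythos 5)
The paper states Donaldson's theorem and cites \cite{Donaldson-1983,Donaldson-1987} without giving a proof, so there is no argument in the text to compare yours against; what you have written is a summary of Donaldson's own gauge-theoretic argument, and in broad outline it is faithful to that argument.

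Two points are imprecise enough to be worth flagging. First, the index formula for the expected dimension of the $c_2=1$ moduli space is $8c_2(P)-3\bigl(1-b_1(X)+b_2^+(X)\bigr)$; you dropped the $b_1$ term, so the arithmetic $=5$ is valid only when $b_1(X)=0$. The theorem as stated in the paper places no restriction on $\pi_1(X)$, and Donaldson's 1983 paper handled the simply-connected case; the general case (nontrivial fundamental group, including the possibility of nontrivial flat reducibles and a different formal dimension) is the content of the 1987 paper, which your sketch does not reach. Second, the endgame is compressed. The cobordism together with signature additivity shows that the number $r$ of pairs $\pm\alpha\in H^2(X;\mathbb{Z})$ with $\alpha^2=-1$ satisfies $r\geq n$; but the concluding step relies on the purely arithmetic lemma that a negative definite unimodular lattice of rank $n$ contains \emph{at most} $n$ such pairs, with equality forcing the lattice to be $-\mathbb{Z}^n$ with the pairs giving the standard basis. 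Saying that the $\alpha$'s ``can be arranged as a pairwise orthogonal basis'' elides this lemma, which is where the diagonalization actually comes from; the gauge theory by itself only produces the inequality in the other direction. With those two caveats, your description of the roles of Uhlenbeck compactness, the Taubes collar, the Kuranishi cone on $\mathbb{CP}^2$ at each reducible, and the resulting cobordism is accurate, and your closing remark on the necessity of the smooth structure (Freedman's $E_8$-manifold) is the right contextual point.
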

Recall the following observation, which can be obtained from algebraic topology.

\begin{proposition}[{\cite[Lemma 2.4]{AMP-2022}}]\label{prop:orthogonal_complement} Let $Y$ be an oriented $3$-manifold with $H^1(Y;\mathbb{Z})=0$ which is the boundary of compact, oriented $4$-manifolds $X_1$ and $X_2$ with $H_1(X_1;\Bbb Z)=0$. If $X$ is a closed, oriented $4$-manifold obtained by $X:=X_1\cup_Y (-X_2)$, then the inclusions $X_1,-X_2\hookrightarrow X$ induce an embedding of lattices \[
  \iota\colon Q_{X_1}\oplus (-Q_{X_2}) \to Q_X
\]
such that $\iota(-Q_{X_2})$ is the orthogonal complement of $\iota(Q_{X_1})$ in $Q_X$.     
\end{proposition}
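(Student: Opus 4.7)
The plan is to combine Mayer--Vietoris for the splitting $X=X_1\cup_Y(-X_2)$ with Lefschetz duality on $X_1$. As a preliminary, note that $Y$ is a rational homology $3$-sphere: the hypothesis $H^1(Y;\mathbb{Z})=0$ and the universal coefficient theorem force $H_1(Y;\mathbb{Z})$ to be torsion, and Poincar\'e duality then yields $H_2(Y;\mathbb{Z})\cong H^1(Y;\mathbb{Z})=0$.

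The Mayer--Vietoris sequence, applied to open collar neighborhoods of $X_1$ and $-X_2$, together with $H_2(Y;\mathbb{Z})=0$, provides an injection
\[
H_2(X_1;\mathbb{Z})\oplus H_2(-X_2;\mathbb{Z})\hookrightarrow H_2(X;\mathbb{Z}),
\]
which descends modulo torsion to the desired abelian group homomorphism $\iota$. To see that $\iota$ respects the intersection form, one argues geometrically: each class from $H_2(X_i)$ is represented by a cycle in the interior of $X_i$, so by locality of the cup product its self-pairings are preserved; classes from opposite sides can be represented by cycles in disjoint open subsets, so their cross pairing vanishes. This already gives $\iota(-Q_{X_2})\subseteq\iota(Q_{X_1})^{\perp}$.

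The main obstacle is the reverse inclusion $\iota(Q_{X_1})^{\perp}\subseteq\iota(-Q_{X_2})$. My plan is to use the long exact sequence of the pair $(X,-X_2)$,
\[
H_2(-X_2)\to H_2(X)\xrightarrow{\psi}H_2(X,-X_2)\to H_1(-X_2),
\]
together with the chain of identifications $H_2(X,-X_2)\cong H_2(X_1,Y)\cong H^2(X_1)\cong\mathrm{Hom}(H_2(X_1;\mathbb{Z})/\mathrm{Tor},\mathbb{Z})$, obtained from excision, Lefschetz duality, and the universal coefficient theorem (using $H_1(X_1;\mathbb{Z})=0$); in particular, the target is torsion-free. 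Naturality of the intersection pairing identifies $\psi(\gamma)$ with the functional $\alpha\mapsto\langle\iota(\alpha),\gamma\rangle_X$ on $H_2(X_1;\mathbb{Z})/\mathrm{Tor}$, so any $\gamma\in\iota(Q_{X_1})^{\perp}$ satisfies $\psi(\gamma)=0$ and thus lies in the image of $H_2(-X_2;\mathbb{Z})\to H_2(X;\mathbb{Z})$; passing to the torsion-free quotient gives $\gamma\in\iota(-Q_{X_2})$, as desired.
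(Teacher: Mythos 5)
Your proof is correct, and it follows the standard line of argument for this kind of lattice-embedding lemma (which is also what Aceto--Meier--A. Miller--M. Miller--Park--Stipsicz use in the cited Lemma~2.4). All the steps check out: $H^1(Y;\mathbb{Z})=0$ plus universal coefficients and Poincar\'e duality do give $H_2(Y;\mathbb{Z})=0$, so the Mayer--Vietoris map $H_2(X_1)\oplus H_2(-X_2)\to H_2(X)$ is injective, and injectivity passes to the torsion-free quotients since $\mathrm{Tor}(A\oplus B)=\mathrm{Tor}(A)\oplus\mathrm{Tor}(B)$ and an injection sends non-torsion elements to non-torsion elements. The form is preserved because the inclusions are orientation-preserving codimension-zero embeddings with disjoint interiors, which also gives $\iota(-Q_{X_2})\subseteq\iota(Q_{X_1})^{\perp}$. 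For the reverse inclusion, the chain $H_2(X,-X_2)\cong H_2(X_1,Y)\cong H^2(X_1)\cong\mathrm{Hom}(H_2(X_1)/\mathrm{Tor},\mathbb{Z})$ via excision, Lefschetz duality, and universal coefficients with $H_1(X_1;\mathbb{Z})=0$ is exactly where the $H_1(X_1;\mathbb{Z})=0$ hypothesis enters: it kills the $\mathrm{Ext}$ term, making the target torsion-free. The naturality claim that $\psi(\gamma)$ is the functional $\alpha\mapsto\langle\iota(\alpha),\gamma\rangle_X$ is the standard compatibility of Lefschetz duality on $X_1$ with Poincar\'e duality on the closed manifold $X$ under the inclusion $X_1\hookrightarrow X$; you state it tersely, but it is exactly the right statement and it is true. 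Given that, any $\gamma$ in the orthogonal complement has $\psi(\tilde\gamma)=0$ for any integral lift $\tilde\gamma$, hence $\tilde\gamma$ comes from $H_2(-X_2)$, and passing to the torsion-free quotient gives $\gamma\in\iota(-Q_{X_2})$. The paper itself merely cites the result, so there is no in-text proof to compare against, but your argument is complete and is the expected one.
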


These constraints on the intersection lattices of 4-manifolds have been used to provide conditions for a smooth $4$-manifold with a specified boundary $3$-manifold. For example, the aforementioned results of Lisca \cite{Lisca-2007} and Greene \cite{Greene-2013} are based on analyzing lattice embeddings. In particular, for a smooth 4-manifold $W$ with $H_1(W;\mathbb{Z})=0$, $b_2(W)=b_2^+(W)=1$, and a lens space boundary, we have the following corollary.

\begin{corollary}\label{cor:orthogonal_complement} If $L(p,q)$ bounds a compact, oriented, smooth $4$-manifold $W$ with $H_1(W;\mathbb{Z})=0$ and $b_2(W)=b_2^+(W)=1$, then there exists an embedding $\iota\colon Q_{X(p,q)}\hookrightarrow -\mathbb{Z}^{b_2(X(p,q))+1}$ of lattices such that the generator (which is uniquely determined up to sign) of the orthogonal complement of $\iota\left(Q_{X(p,q)}\right)$ in $-\mathbb{Z}^{b_2(X(p,q))+1}$ has square $-p$.   
\end{corollary}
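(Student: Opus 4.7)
The plan is to construct a closed negative-definite smooth $4$-manifold by gluing $X(p,q)$ and $-W$ along their common boundary, apply Donaldson's diagonalization theorem to pin down its intersection form, and then read off the desired embedding from Proposition~\ref{prop:orthogonal_complement}.

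First, I would form the closed $4$-manifold
\[
Z := X(p,q) \cup_{L(p,q)} (-W),
\]
gluing the negative-definite plumbing $X(p,q)$ (with $\partial X(p,q) = L(p,q)$) to $-W$ (with $\partial(-W) = -L(p,q)$) along the obvious identification of boundaries. Since $W$ has $b_2(W) = b_2^+(W) = 1$, the reversed manifold $-W$ is negative definite of rank one, so $Z$ is a closed, oriented, smooth, negative-definite $4$-manifold with $b_2(Z) = 1 + b_2(X(p,q))$. Theorem~\ref{thm:Donaldson} then gives $Q_Z \cong -\mathbb{Z}^{b_2(X(p,q))+1}$.

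Next, I would apply Proposition~\ref{prop:orthogonal_complement} to this gluing with $Y = L(p,q)$, $X_1 = X(p,q)$, and $X_2 = W$. The hypothesis $H^1(L(p,q);\mathbb{Z}) = 0$ holds because $L(p,q)$ is a rational homology sphere, and $H_1(X(p,q);\mathbb{Z}) = 0$ because the linear plumbing $X(p,q)$ is simply connected, being built from a single $0$-handle and $2$-handles. The proposition then produces an isometric lattice embedding
\[
\iota: Q_{X(p,q)} \oplus (-Q_W) \hookrightarrow Q_Z \cong -\mathbb{Z}^{b_2(X(p,q))+1}
\]
with $\iota(-Q_W)$ equal to the orthogonal complement of $\iota(Q_{X(p,q)})$ in $-\mathbb{Z}^{b_2(X(p,q))+1}$.

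To finish, I would restrict $\iota$ to the first summand to obtain the desired embedding of $Q_{X(p,q)}$ into $-\mathbb{Z}^{b_2(X(p,q))+1}$, whose orthogonal complement is precisely $\iota(-Q_W)$. By Corollary~\ref{cor:linking_form}, $Q_W$ is represented by the $1\times 1$ matrix $(p)$, so $-Q_W$ is the rank-one lattice generated by a single element of square $-p$; because $\iota$ is isometric, its image is a primitive generator of the orthogonal complement with square $-p$, exactly as claimed. The one genuine subtlety is the bookkeeping choice $X_1 = X(p,q)$ and $X_2 = W$ (rather than the reverse): this routes the orthogonal-complement conclusion of Proposition~\ref{prop:orthogonal_complement} onto the rank-one summand $-Q_W$, letting us read off the generator of the complement directly without needing any further primitivity argument.
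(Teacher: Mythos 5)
Your proof is correct and follows essentially the same route as the paper: glue $X(p,q)$ to $-W$ along $L(p,q)$, invoke Donaldson's diagonalization on the resulting closed negative-definite manifold, apply Proposition~\ref{prop:orthogonal_complement}, and identify $Q_W$ as $(p)$ via Corollary~\ref{cor:linking_form}. You spell out a few hypothesis checks (that $L(p,q)$ is a rational homology sphere, that $X(p,q)$ is simply connected) that the paper leaves implicit, but the argument is the same.
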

\begin{proof} Note that the intersection form of $W$ is represented by the matrix $(p)$ by Corollary \ref{cor:linking_form}. Now, consider the closed, oriented, smooth $4$-manifold $X:=(-W)\cup_\partial X(p,q)$, which is negative definite with $b_2(X)=b_2(X(p,q))+1$. By Theorem \ref{thm:Donaldson}, the intersection form $Q_X$ of $X$ is isomorphic to $-\mathbb{Z}^{b_2(X(p,q))+1}$. The result now follows immediately from Proposition \ref{prop:orthogonal_complement}.    
\end{proof}

\subsection{The Lens Space Realization Problem}\label{subsec:lens_space_realization}
In this subsection, we briefly recall Greene's result on the lens space realization problem \cite{Greene-2013} and provide a description of the collection $\LK$, which will be used to demonstrate that certain families of lens spaces are not contained in $\LK$.

\begin{notation}\label{notation:knot_surgery} For a knot $K\subset S^3$ and a positive integer $p\in \mathbb{Z}_{>0}$, let $S^3_p(K)$ denote the 3-manifold obtained from $S^3$ by $p$-surgery along $K$, and let $W_p(K)$ denote the 4-manifold obtained by attaching a $p$-framed $2$-handle to $D^4$ along $K\subset S^3=\partial D^4$. Note that $\partial W_p(K)=S^3_p(K)$.  
\end{notation}

A knot in $S^3$ that admits a lens space surgery is called a \textit{lens space knot}. Several families of lens space knots were discovered by Berge \cite{Berge-2018}, and the following \textit{Berge conjecture} posits that all lens space knots are, in fact, Berge's knots.

\begin{conjecture}[Berge Conjecture, {\cite[Problem 1.78]{Kirby-1995}, \cite[Conjecture 1.1]{Greene-2013}}]\label{conj:Berge} If an integer surgery along a knot $K$ in $S^3$ produces a lens space, then it must arise from Berge’s construction.     
\end{conjecture}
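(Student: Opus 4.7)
The Berge Conjecture is a well-known open problem, so what follows is not the sketch of an imminent proof but a description of the natural strategy and where it breaks down. My plan would build on Greene's resolution of the lens space realization problem, which already establishes that any lens space $L$ arising from integer surgery on a knot $K\subset S^3$ also arises from surgery on \emph{some} Berge knot $K'$ with the same slope. What remains is a rigidity statement at the level of knots: $K$ itself must be a Berge knot, not merely a knot that happens to produce the same lens space surgery.

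The first step is to pin down as much Floer-theoretic data of $K$ as possible. By work of Ozsv\'ath and Szab\'o, a lens space knot is an L-space knot, so its Alexander polynomial has $\pm 1$ coefficients alternating in sign, and the full knot Floer complex $\operatorname{CFK}^\infty(K)$ is determined by the Alexander polynomial alone. Since the Alexander polynomial can be recovered from the pair $(L,p)$ consisting of the resulting lens space and the surgery slope, the knot Floer complex of $K$ must agree with that of whichever Berge knot $K'$ produces $L$ via $p$-surgery. The second step is to upgrade this agreement to an actual identification $K \simeq K'$, exploiting the fact that lens space knots are fibered, that the Seifert genus is determined by $(L,p)$, and that the dual Berge knot sits as a simple closed curve on a genus-one Heegaard surface of $L$.

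The genuine obstacle is this last rigidity step. Knot Floer homology is not a complete knot invariant in general, and there is no known reason why equality of Floer data with a Berge knot should force a knot to be a Berge knot. Overcoming this would require supplementary geometric input: a finer analysis of the induced Heegaard splittings of $L$ showing that the core of the surgery must lie on a genus-one surface in Berge's list; a case split dispatching torus-knot and cable-knot surgeries through the classical results of Moser and Bleiler--Litherland; and a Dehn surgery characterization, presumably via thin position or sutured-manifold techniques, for the remaining hyperbolic lens space knots. Finding such a knot-detecting principle is essentially the unresolved half of the conjecture, and no approach currently available is known to handle it uniformly.
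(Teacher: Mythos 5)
The statement you were asked about is a \emph{conjecture}, not a theorem: the paper records the Berge Conjecture as a famous open problem and offers no proof of it. You are therefore correct to refuse to present a proof and instead describe the state of the art. Your account matches the paper's own discussion in Section~2.2: Greene's Theorem~\ref{thm:Greene} resolves the lens space realization problem and thereby confirms the ``surface-level'' half of the conjecture (the lens spaces and slopes that appear are exactly those on Berge's list), while the genuinely open part is the rigidity statement that the knot $K$ itself must be a Berge knot. Your identification of knot Floer homology's failure to be a complete invariant as the core obstruction, and your mention of the torus/cable cases handled by Moser and Bleiler--Litherland, are standard and accurate framing of why the remaining half is hard. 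In short, there is no gap in your response to fix; there is simply no proof to compare against, and you correctly said so.
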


Given a lens space knot $K\subset S^3$ such that $p$-surgery along $K$ yields the lens space $L(p,q)$, there is a corresponding \textit{dual knot} $K'\subset S^3_p(K)=L(p,q)$, which is the core of the surgery solid torus. Reversing the surgery, a \textit{negative} integer surgery on $L(p,q)$ along $K'$ recovers $S^3$.

Berge's knots are \textit{doubly primitive} knots, which are knots that lie on a Heegaard surface of genus two for $S^3$ and represent a primitive element in the fundamental group of each handlebody. (Conversely, every doubly primitive knot in $S^3$ is a Berge knot \cite[Theorem 1.3]{Greene-2013}.) The dual to a doubly primitive knot is an example of a \textit{simple knot}, of which there is exactly one in each homology class in $L(p,q)$ \cite{Berge-2018}. Thus, the dual to a Berge knot is determined by its homology class. The so-called \textit{Berge list}, summarized in \cite[Section 6.2]{Rasmussen-2007} and \cite[Section 1.2]{Greene-2013}, describes the knots in $L(p,q)$ that are dual to Berge's knots. To describe a dual knot that yields a negative $S^3$ surgery, select a positive integer $k$ and determine the corresponding positive integer $p$ from the list. The value $k \mod p$ represents the homology class of the dual knot in $H_1(L(p,q);\mathbb{Z})\cong \mathbb{Z}_p$, where we have $q \equiv -k^2\mod p$. 

Greene has proved that lens spaces in the Berge list are precisely all lens spaces that can arise from a positive integer surgery along a knot in $S^3$. This result resolves the lens space realization problem and confirms the lens spaces that appear in the Berge conjecture.

\begin{theorem}[{\cite[Theorem 1.3]{Greene-2013}}]\label{thm:Greene} Let $K$ be a knot in $S^3$ and $p$ a positive integer. If $S^3_p(K)$ is a lens space, then there exists a Berge knot $B$ in $S^3$ such that $S^3_p(B)\cong S^3_p(K)$.    
\end{theorem}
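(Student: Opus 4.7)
The plan is to extract a strong lattice-theoretic obstruction from Donaldson's theorem and then match it to Berge's list. Suppose $S^3_p(K) \cong L(p,q)$ for some knot $K$ and some positive integer $p$. The trace $W_p(K)$ is then a smooth $4$-manifold with $H_1=0$, $b_2=b_2^+=1$, intersection form $(p)$, and boundary $L(p,q)$. Gluing $-W_p(K)$ to the negative definite plumbing $X(p,q)$ along their common lens space boundary, applying Theorem \ref{thm:Donaldson}, and invoking Corollary \ref{cor:orthogonal_complement}, one obtains a lattice embedding $\iota\colon Q_{X(p,q)}\hookrightarrow -\mathbb{Z}^{n+1}$ with $n=b_2(X(p,q))$, whose orthogonal complement is generated by a single vector $\sigma$ with $\langle \sigma,\sigma\rangle=-p$.

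The central step is to show that, in a suitable basis, $\sigma$ is a \emph{changemaker}: writing $\sigma=\sum_{i=0}^{n}\sigma_i e_i$ with $0\le\sigma_0\le\sigma_1\le\dots\le\sigma_n$, every integer in $\{0,1,\dots,p-1\}$ is a subset sum $\sum \epsilon_i\sigma_i$, $\epsilon_i\in\{0,1\}$. The tool is the Heegaard Floer correction term ($d$-invariant): since $L(p,q)$ is an $L$-space, its $d$-invariants at every $\mathrm{Spin}^c$-structure can be computed both from $X(p,q)$ and from the positive definite cobordism $W_p(K)$ (via the Ozsv\'ath--Szab\'o bound on $d$-invariants of positive definite cobordisms with $b_2^+=1$), and the two calculations must agree. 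Translating this equality of absolute gradings into the lattice language forces the orthogonal complement vector to have the changemaker property.

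With $\sigma$ a changemaker, the linear structure of $X(p,q)$ (consecutive basis vectors have intersection $+1$ and self-intersection $-a_i\le -2$) imposes rigid compatibility conditions on the coordinates $\sigma_i$. A combinatorial enumeration of such embeddings then produces a finite list of admissible pairs $(p,q)$, parametrized by integers $k$, and one recognizes these as exactly the pairs with $q\equiv -k^2\pmod{p}$ appearing on the Berge list. For each such pair, one verifies directly that the corresponding Berge knot $B$ realizes $S^3_p(B)\cong L(p,q)$, completing the proof.

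The main obstacle is establishing the changemaker property. Donaldson's theorem alone only records the existence of some lattice embedding; the changemaker conclusion requires packaging Heegaard Floer data (the $d$-invariants as absolute Maslov gradings on $\mathrm{Spin}^c$-structures) with the arithmetic of short characteristic covectors in $-\mathbb{Z}^{n+1}$. This is the technical heart of the argument; once it is in place, everything afterwards reduces to lattice combinatorics on linear plumbings and matching against the explicit Berge families.
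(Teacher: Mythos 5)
This theorem is Greene's own Theorem 1.3; the paper cites it as external input and does not reprove it, so there is no in-paper argument to compare against. Your sketch correctly captures the four-step architecture of Greene's actual proof: form the surgery trace $W_p(K)$ and glue to the plumbing $X(p,q)$; apply Donaldson to obtain a lattice embedding with one-dimensional orthogonal complement of square $-p$; show that the generator of that complement is a changemaker using Heegaard Floer input; and then combinatorially classify linear lattices whose complement is a changemaker and match them to Berge's families.

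A few points are imprecise or understated, and one is a genuine misdescription of the key technical tool. First, your restatement of the changemaker condition is off: the subset sums must realize every integer in $\{0,\dots,\sigma_0+\cdots+\sigma_n\}$, not $\{0,\dots,p-1\}$ (note $p=\sum\sigma_i^2$, which is a different quantity). Second, and more seriously, the Heegaard Floer input is not ``the Ozsv\'ath--Szab\'o bound on $d$-invariants of positive definite cobordisms with $b_2^+=1$.'' Greene's derivation of the changemaker condition instead rests on the structure theorem for L-space knots (the Alexander polynomial of a lens space knot has coefficients in $\{-1,0,1\}$, nonzero ones alternating in sign) together with the rational surgery formula expressing the $d$-invariants of $S^3_p(K)$ in terms of the torsion coefficients $t_i(K)$, compared against the $d$-invariants of $L(p,q)$ computed from characteristic covectors of the linear plumbing. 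A mere $d$-invariant inequality for positive definite cobordisms would not pin down the changemaker structure; the equality coming from the surgery formula and the rigidity of L-space knot Floer homology is essential. Third, the phrase ``a combinatorial enumeration of such embeddings then produces a finite list of admissible pairs'' substantially understates what is involved: this is the main body of Greene's paper, an intricate case analysis classifying which linear lattices embed as changemaker complements, and the output is a collection of infinite families (not a finite list) that must then be matched, family by family, to Berge's types. As a high-level roadmap your sketch is faithful, but none of the three nontrivial steps (establishing the changemaker condition, classifying changemaker-complement linear lattices, matching to Berge) is actually carried out, so this should be understood as an outline of the cited external proof rather than a proof.
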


It also follows that the collection $\LK$ consists of those lens spaces $L(p,q)$ such that $(p,q)$ (or $(p,q')$, where $qq'\equiv 1\mod p$) appears in Berge's list. An alternative and useful description of Berge’s list is provided through the following definition.

\begin{definition}[{\cite[Definition 1.5]{Greene-2013}}]\label{def:changemaker} A vector $(\sigma_0,\dots,\sigma_n)\in \mathbb{Z}^{n+1}$ with $1=\sigma_0\leq \sigma_1\leq \cdots \leq \sigma_n$ is called a \textit{changemaker} if, for any integer $k$ with $0\leq k\leq \sigma_0+\cdots+\sigma_n$, there exists a subset $A\subset \{0,\dots,n\}$ such that $\sum_{i\in A}\sigma_i=k$. Equivalently, $\sigma_i\leq \sigma_0+\cdots+\sigma_{i-1}+1$ for each $i=1,\dots,n$. 
\end{definition}

\begin{theorem}[{\cite[Theorem 1.6]{Greene-2013}, \cite[Theorem 3.3]{Greene-2015}}]\label{thm:changemaker} Suppose that $p$-surgery along a knot $K\subset S^3$ produces the lens space $L(p,q)$, where $p$ is a positive integer. Then there exists a full rank lattice embedding \[
\iota\colon Q_{X(p,q)}\oplus \left(-Q_{W_p(K)}\right) \hookrightarrow -\mathbb{Z}^{b_2(X(p,q))+1}
\]
such that the image of a generator of $H_2(-W_p(K);\mathbb{Z})\cong \mathbb{Z}$ is a changemaker with square $-p$.
\end{theorem}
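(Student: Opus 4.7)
The plan is to obtain the lattice embedding by a standard Donaldson-gluing argument, and then promote $\sigma$ to a changemaker by comparing Heegaard Floer correction terms.

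For the embedding, I would form the closed, oriented, smooth 4-manifold
\[
X := X(p,q)\cup_{L(p,q)} \bigl(-W_p(K)\bigr).
\]
Both pieces have vanishing first integral homology, $X(p,q)$ is negative definite, and $-W_p(K)$ has intersection form $(-p)$, so $X$ is negative definite with $b_2(X)=b_2(X(p,q))+1$. Donaldson's theorem (Theorem \ref{thm:Donaldson}) identifies $Q_X$ with $-\mathbb{Z}^{b_2(X(p,q))+1}$, and Proposition \ref{prop:orthogonal_complement} produces an embedding $\iota$ of $Q_{X(p,q)}\oplus(-Q_{W_p(K)})$ into $-\mathbb{Z}^{b_2(X(p,q))+1}$ in which $\iota(-Q_{W_p(K)})$ is the orthogonal complement of $\iota(Q_{X(p,q)})$. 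A rank count makes $\iota$ full rank, and the image $\sigma$ of a generator of $H_2(-W_p(K);\mathbb{Z})\cong\mathbb{Z}$ satisfies $\langle\sigma,\sigma\rangle=-p$ since $-Q_{W_p(K)}=(-p)$.

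The deeper step is showing that $\sigma$ is a changemaker. I would compare the Heegaard Floer correction terms $d(L(p,q),\mathfrak{t})$, ranging over $\mathrm{Spin}^c$ structures $\mathfrak{t}$ on $L(p,q)$, computed in two different ways. On one hand, from the negative definite plumbing $X(p,q)$, the Ozsv\'ath--Szab\'o formula expresses these $d$-invariants in terms of maxima of $\frac{c^2+n}{4}$ over short characteristic covectors $c$ of $Q_{X(p,q)}$. On the other hand, because $L(p,q)$ is an L-space, the knot $K$ must be an L-space knot, and so the $d$-invariants of $S^3_p(K)$ are determined explicitly by the Alexander polynomial of $K$ via the Ozsv\'ath--Szab\'o L-space surgery formula. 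Matching these two families of values across all $p$ residue classes, transported through $\iota$, pins down the coordinates of $\sigma$ in the standard basis of $-\mathbb{Z}^{b_2(X(p,q))+1}$: after reordering and sign adjustment on the ambient basis, the resulting inequalities $|\sigma_i|\leq|\sigma_0|+\cdots+|\sigma_{i-1}|+1$ with $|\sigma_0|=1$ are precisely the changemaker condition of Definition \ref{def:changemaker}.

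The main obstacle is this $d$-invariant comparison. The Donaldson-gluing step and the computation of $\langle\sigma,\sigma\rangle$ are routine, but extracting the changemaker inequalities requires a careful analysis of how the L-space surgery formula interacts with the combinatorics of short characteristic covectors in $-\mathbb{Z}^{b_2(X(p,q))+1}$, and in particular a verification that every residue class in $\mathbb{Z}/p\mathbb{Z}$ contributes the sharp constraint needed to close the induction on the coordinates of $\sigma$. This is the technical heart of Greene's argument in \cite{Greene-2013, Greene-2015}, and it is where essentially all the work of the proof is concentrated.
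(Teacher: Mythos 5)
The paper does not prove Theorem \ref{thm:changemaker}; it is cited verbatim from Greene \cite{Greene-2013, Greene-2015} and used as a black box, so there is no in-paper proof to compare against. That said, your reconstruction does follow Greene's actual strategy. The Donaldson-gluing step---forming $X = X(p,q)\cup_{L(p,q)}(-W_p(K))$, applying Theorem \ref{thm:Donaldson} and Proposition \ref{prop:orthogonal_complement} to obtain the full-rank embedding, and reading off $\sigma^2 = -p$---is complete as written and matches Greene. The step that promotes $\sigma$ to a changemaker is where your argument remains a sketch, and you correctly identify the inputs (sharpness of the plumbing $X(p,q)$, so that $d$-invariants of $L(p,q)$ are realized by short characteristic covectors; the L-space surgery formula expressing those same $d$-invariants through the torsion coefficients of the Alexander polynomial of $K$). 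But the passage from ``matching these two families of values'' to the inequalities $\sigma_i\le\sigma_0+\cdots+\sigma_{i-1}+1$ is not a routine bookkeeping step: it is a genuine combinatorial argument about which covectors achieve the maxima residue-class by residue-class, and it occupies a substantial portion of Greene's paper. Since you explicitly flag this as the technical heart and do not attempt to close it, what you have is a faithful roadmap of Greene's proof rather than a proof of the theorem; the gap is real but honestly disclosed.
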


By Proposition \ref{prop:orthogonal_complement}, $\iota\left(-Q_{W_p(K)}\right)$ and $\iota(Q_{X(p,q)})$ are orthogonal complements to each other in $-\mathbb{Z}^{b_2(X(p,q))+1}$. Hence, Theorem \ref{thm:changemaker} implies that the orthogonal complement of $\iota\left(Q_{X(p,q)}\right)$ in $-\mathbb{Z}^{b_2(X(p,q))+1}$ is generated by a changemaker. Furthermore, $Q_{X(p,q)}$ embeds as the orthogonal complement of a changemaker in $-\mathbb{Z}^{b_2(X(p,q))+1}$. The following theorem shows that the converse also holds, offering another characterization of Berge's list.

\begin{theorem}[{\cite[Theorem 1.7]{Greene-2013}}]\label{thm:changemaker2} At least one of the pairs $(p,q)$, $(p,q')$, where $qq'\equiv 1 \mod p$, appears on Berge's list if and only if $Q_{X(p,q)}$ embeds as the orthogonal complement of a changemaker in $-\mathbb{Z}^{b_2(X(p,q))+1}$.
\end{theorem}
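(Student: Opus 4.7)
The plan is to handle the two implications of the biconditional separately: the forward direction is an almost immediate consequence of Theorem~\ref{thm:changemaker} together with Proposition~\ref{prop:orthogonal_complement}, while the reverse direction is the substantive content and requires a classification of changemaker vectors whose orthogonal complements realize linear plumbing lattices.

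For the forward direction, I would assume that either $(p, q)$ or $(p, q')$ lies on Berge's list, and let $B$ be the corresponding Berge knot; using $L(p, q) = L(p, q')$ as oriented $3$-manifolds, I may take $S^3_p(B) \cong L(p, q)$. Applying Theorem~\ref{thm:changemaker} to $B$ yields a full-rank lattice embedding
\[
  \iota: Q_{X(p, q)} \oplus \bigl(-Q_{W_p(B)}\bigr) \hookrightarrow -\mathbb{Z}^{b_2(X(p,q))+1}
\]
in which the image of a generator of $H_2(-W_p(B); \mathbb{Z}) \cong \mathbb{Z}$ is a changemaker $\sigma$ of square $-p$. Since $W_p(B)$ is built from a single $0$- and $2$-handle and hence satisfies $H_1(W_p(B); \mathbb{Z}) = 0$, Proposition~\ref{prop:orthogonal_complement} applies to the closed negative definite $4$-manifold $X(p, q) \cup_{L(p, q)} (-W_p(B))$ and shows that $\iota(Q_{X(p,q)})$ and $\iota(-Q_{W_p(B)}) = \langle \sigma \rangle$ are mutual orthogonal complements. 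This exhibits $Q_{X(p,q)}$ as the orthogonal complement of the changemaker $\sigma$ in $-\mathbb{Z}^{b_2(X(p,q))+1}$.

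For the reverse direction, I would suppose that $Q_{X(p, q)}$ embeds as the orthogonal complement in $-\mathbb{Z}^{b_2(X(p,q))+1}$ of a changemaker $\sigma$ with $\langle \sigma, \sigma \rangle = -p$, and aim to extract from the embedding enough combinatorial data to identify an entry on Berge's list. Writing $p/q = [a_1, \ldots, a_\ell]$, the diagonal entries $-a_i$ of $Q_{X(p,q)}$ dictate how the standard basis $\{e_1, \ldots, e_{\ell+1}\}$ of $-\mathbb{Z}^{\ell+1}$ is partitioned among the plumbing vertices; together with the shape of $\sigma$, this partition should single out a candidate integer $k$ realizing the homology class of the dual knot in $L(p, q)$, with the congruence $q \equiv -k^2 \pmod{p}$ emerging automatically from the lattice identities.

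I expect the main obstacle to be the combinatorial classification underlying this last step: one must show that every changemaker whose orthogonal complement in $-\mathbb{Z}^N$ has the tridiagonal shape of a linear plumbing lattice (off-diagonal entries $1$, diagonal entries $\leq -2$) arises from one of the Berge families tabulated in \cite[Section 6.2]{Rasmussen-2007}. The natural strategy is induction on the length of $\sigma$, with case analysis driven by equalities among consecutive $\sigma_i$ and by which of the defining inequalities $\sigma_i \leq \sigma_0 + \cdots + \sigma_{i-1} + 1$ are tight; matching the resulting families one-by-one to the Berge list is essentially the bulk of Greene's original argument in \cite{Greene-2013}, and I do not expect a significantly shorter route.
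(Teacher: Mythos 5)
The paper does not prove this statement: it is cited verbatim from Greene \cite[Theorem~1.7]{Greene-2013}, so there is no in-paper proof to compare against. What the paper \emph{does} supply, in the two sentences immediately following Theorem~\ref{thm:changemaker}, is exactly the forward implication you spell out: combine Theorem~\ref{thm:changemaker} with Proposition~\ref{prop:orthogonal_complement} to get the changemaker-orthogonal-complement embedding. Your version of that argument is essentially correct, with one small mislabeling: Proposition~\ref{prop:orthogonal_complement} requires $H_1(X_1;\mathbb{Z})=0$ where $X_1$ is the piece you keep the orientation on, so for $X = X(p,q)\cup_{L(p,q)}(-W_p(B))$ the hypothesis you need to check is $H_1(X(p,q))=0$ (true since $X(p,q)$ is a simply connected plumbing), not $H_1(W_p(B))=0$ as you wrote; this is harmless because both vanish, but it is worth tracking the roles carefully. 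Your handling of the $(p,q')$ case via the orientation-preserving identification $L(p,q)\cong L(p,q')$ is the right move.

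For the reverse implication you have a plan, not a proof. What you write is accurate as a description of the difficulty — classifying changemaker vectors whose orthogonal complement in $-\mathbb{Z}^{N}$ is a linear lattice and matching the resulting families to Berge's tabulation is the bulk of Greene's paper — but nothing in the proposal actually carries out that classification, and the intermediate claim that the embedding ``should single out a candidate integer $k$ with $q\equiv -k^2 \pmod p$'' is more optimistic than what happens in Greene's argument, where the parameter $k$ emerges family-by-family from the classification rather than being read off directly from a single lattice identity. So the proposal has a genuine gap in the reverse direction; it correctly identifies where the work lies but does not do it, and the paper itself simply defers to Greene rather than reproducing it.
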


\subsection{Rational Homology Projective Planes with Unique Cyclic Singularity}\label{subsec:qhcp2}
A normal projective complex surface $S$ whose Betti numbers $b_i(S)$ match those of the complex projective plane $\mathbb{CP}^2$ is called a \textit{rational homology  projective plane} (or a $\mathbb{Q}$-homology $\mathbb{CP}^2$). A quotient singularity $p\in \textup{Sing}(S)$ is called \textit{cyclic} if the reduced exceptional divisor of its minimal resolution consists of rational curves, and it has the weighted dual graph as shown in Figure \ref{fig:cyclic_sing} with $b_i\geq 2$. In this case, $p$ is said to be \textit{cyclic of type $(n,a)$} where $\frac{n}{a}=[b_1,\dots,b_m]$ and $n>a>0$ are relatively prime. In fact, the germ $(S,p)$ is locally analytically isomorphic to $(\mathbb{C}^2/C_{n,a},0)$, where $C_{n,a}\subset \textup{GL}(2,\mathbb{C})$ is a cyclic subgroup generated by the matrix \[
\begin{bmatrix}
    e^{2\pi i/ n} & 0 \\ 0 & e^{2\pi i a/n}
\end{bmatrix}.
\]

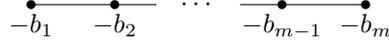
\begin{figure}[!t]
\centering
\begin{tikzpicture}[scale=1.1]
\draw (-2,0) node[circle, fill, inner sep=1.2pt, black]{};
\draw (-1,0) node[circle, fill, inner sep=1.2pt, black]{};
\draw (1,0) node[circle, fill, inner sep=1.2pt, black]{};
\draw (2,0) node[circle, fill, inner sep=1.2pt, black]{};

\draw (-2,0) node[below]{$-b_1$};
\draw (-1,0) node[below]{$-b_2$};
\draw (1,0) node[below]{$-b_{m-1}$};
\draw (2,0) node[below]{$-b_m$};

\draw (0,0) node{$\cdots$};

\draw (-2,0)--(-1,0) (-1,0)--(-0.5,0) (0.5,0)--(1,0)  (1,0)--(2,0) ;
\end{tikzpicture}
\caption{The weighted dual graph of the cyclic singularity of type $(n,a)$}
\label{fig:cyclic_sing}
\end{figure}

Suppose that $S$ is a $\mathbb{Q}$-homology $\mathbb{CP}^2$ with a unique singularity $p\in \textup{Sing}(S)$ which is cyclic of type $(n,a)$. Then the link of $S$ at $p$ is the lens space $L(n,a)$, and $p$ has a neighborhood that is homeomorphic to the cone on $L(n,a)$. Removing this cone neighborhood from $S$ yields a smooth $4$-manifold $S^{0}$ with $b_1=0$ and $b_2=b_2^+=1$, whose boundary is the lens space $L(n,n-a)$. 

Therefore, given relatively prime integers $p>q>0$, one way to construct a smooth $4$-manifold with $b_1=0$, $b_2=b_2^+=1$ and boundary $L(p,q)$ is to build a $\mathbb{Q}$-homology $\mathbb{CP}^2$ with a unique singularity that is cyclic of type $(p,p-q)$. However, such a $\mathbb{Q}$-homology $\mathbb{CP}^2$ does not exist for arbitrary values of $p$ and $q$. A necessary condition for its existence is provided by the following proposition: 

\begin{proposition}[{\cite{Hwang-Keum-2011-1, Hwang-Keum-2011-2}}]\label{prop:D_square} Let $S$ be a $\mathbb{Q}$-homology $\mathbb{CP}^2$ with a unique singularity that is cyclic of type $(n,a)$. Then \[
D:=n\cdot \left(9-3m+\sum_{i=1}^m b_i -2 +\frac{a+a'+2}{n}\right)
\]
must be a square number (including zero), where $\displaystyle\frac{n}{a}=[b_1,\dots,b_m]$ with $b_i\geq 2$ and $a'$ is the unique inverse of $a$ modulo $n$ such that $0<a'<n$. 
\end{proposition}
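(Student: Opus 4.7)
My plan is to compute $K_S^2$ by applying the Noether formula on the minimal resolution of the singularity, and then to show $nK_S^2$ is a perfect square using the unimodular intersection form on $H^2$ of the resolution.

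Let $\pi\colon \tilde S \to S$ be the minimal resolution, with exceptional chain $E_1, \ldots, E_m$ of $(-b_i)$-curves. Since cyclic quotient singularities are rational, $\chi(\mathcal{O}_{\tilde S}) = \chi(\mathcal{O}_S) = 1$ (using $p_g = q = 0$, which holds for every $\mathbb{Q}$-homology $\mathbb{CP}^2$), and $e(\tilde S) = e(S) + m = m+3$. Noether's formula therefore gives $K_{\tilde S}^2 = 9 - m$. Writing $K_{\tilde S} = \pi^* K_S + \Delta$ with discrepancy $\Delta = \sum d_i E_i$ (uniquely determined by the adjunction identities $\Delta \cdot E_i = K_{\tilde S}\cdot E_i = b_i - 2$ together with $\pi^* K_S \cdot E_i = 0$), one obtains
\[
K_S^2 \;=\; K_{\tilde S}^2 - \Delta^2 \;=\; 9 - m - \Delta^2.
\]

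The essential input is the classical Hirzebruch--Jung identity
\[
\Delta^2 \;=\; 2m + 2 - \sum_{i=1}^m b_i - \frac{a + a' + 2}{n},
\]
which can be proved by induction on the length of the continued fraction $[b_1, \ldots, b_m]$, or directly by inverting the tridiagonal intersection matrix of the exceptional chain---whose extremal inverse diagonal entries equal $-a/n$ and $-a'/n$---and evaluating $\Delta^2$ through the defining relations $\Delta \cdot E_i = b_i - 2$. Substituting this identity yields $K_S^2 = D/n$ exactly.

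Finally, I would argue that $D = nK_S^2$ is a non-negative integer square using the unimodular lattice $\Lambda := H^2(\tilde S, \mathbb{Z})/\textrm{Tor}$, which has signature $(1, m)$ since $p_g(\tilde S) = 0$. The sublattice $E := \langle E_1, \ldots, E_m\rangle$ is primitive of discriminant $(-1)^m n$, and its orthogonal complement $F = \mathbb{Z}V$ is a primitive positive-definite rank-one sublattice with $V^2 = v > 0$. Unimodularity forces $nv = s^2$ where $s := [\Lambda : E \oplus F]$, and the natural injection $\Lambda/(E \oplus F) \hookrightarrow E^*/E$ shows $s \mid n$. Since $F$ is primitive in the unimodular $\Lambda$, the orthogonal projection $p_F \colon \Lambda \to F \otimes \mathbb{Q}$ has image exactly $F^* = \tfrac{1}{v}\mathbb{Z}V$, and $p_F(K_{\tilde S}) = \pi^* K_S$; hence $\pi^* K_S = (j/v)V$ for some $j \in \mathbb{Z}$ and $K_S^2 = j^2/v$. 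Writing $n = sk$ gives $v = s/k$, so
\[
D \;=\; nK_S^2 \;=\; (kj)^2,
\]
a non-negative perfect square. The main technical hurdle I foresee is the closed-form identity for $\Delta^2$, a nontrivial Hirzebruch--Jung computation; the Noether and lattice-arithmetic steps are otherwise routine.
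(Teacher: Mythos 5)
The paper does not actually prove this proposition---it imports it with a citation to Hwang--Keum---so there is no paper proof to compare against; I will assess your argument on its own terms.

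Your argument is correct and is the natural one. The Noether step is fine: $b_1(\tilde S)=b_1(S)=0$ since the exceptional fiber is a tree of rational curves, and the negative-definite exceptional sublattice forces $b_2^+(\tilde S)=1$, hence $q=p_g=0$, giving $K_{\tilde S}^2=12-e(\tilde S)=9-m$. The Hirzebruch--Jung discrepancy identity you quote is the standard one (it checks out, e.g.\ against the length-one case $[n]$, where $\Delta^2=-(n-2)^2/n=4-n-4/n$), and substituting gives exactly $D=nK_S^2$. The lattice argument then closes it: $\pi^*K_S$ lies in $F\otimes\mathbb{Q}$, the projection of the integral class $K_{\tilde S}$ onto $F\otimes\mathbb{Q}$ lands in $F^*$ because $F$ is a primitive sublattice of the unimodular $\Lambda$, and the discriminant bookkeeping makes $nK_S^2$ a perfect square.

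One small correction of bookkeeping rather than substance: you assert that $E=\langle E_1,\dots,E_m\rangle$ is \emph{primitive} in $\Lambda$. This is neither needed nor always true---for instance, in the construction of Proposition \ref{prop:construction1}, $\operatorname{Pic}(\tilde S_{m,n})/\langle D_0,D_i,D_i'\rangle\cong H^2(S^0_{m,n};\mathbb{Z})$ has $\mathbb{Z}_{m+2}$-torsion, so the exceptional sublattice is non-primitive there. Fortunately your argument never uses primitivity of $E$: the orthogonal complement $F$ is primitive automatically, the injection $\Lambda/(E\oplus F)\hookrightarrow E^*/E$ and the discriminant relation $|\mathrm{disc}\,E|\,|\mathrm{disc}\,F|=[\Lambda:E\oplus F]^2$ hold for any finite-index orthogonal decomposition of a unimodular lattice, and the projection onto $F^*$ only needs $F$ (not $E$) to be primitive. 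So $nv=s^2$, $s\mid n$, and $D=(kj)^2$ go through unchanged; just drop the word ``primitive'' for $E$.
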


There are various known $\mathbb{Q}$-homology $\mathbb{CP}^2$'s with unique cyclic singularities. In particular, such $\mathbb{Q}$-homology $\mathbb{CP}^2$'s whose canonical divisors are anti-ample have been classified \cite{Kojima-1999}. For cases where the canonical divisor is not anti-ample, see \cite[Theorem 1.5]{Hwang-Keum-2012} for example.

A standard method for constructing a $\mathbb{Q}$-homology $\mathbb{CP}^2$ with a cyclic quotient singularity, which we will use, is as follows: Start with a configuration of rational curves in the complex projective plane $\mathbb{CP}^2$ or in the Hirzebruch surface $\Sigma_n$ for some degree $n\geq 0$. Then, blow up successively at the intersection points to obtain a linear chain of rational curves, noting that the length and self-intersection numbers are determined by the desired singularity. The length of this linear chain must be one less than the second Betti number $b_2$ of the resulting surface. Finally, by contracting this linear chain, we obtain a rational homology projective plane with the desired cyclic singularity.

\section{Lens Spaces in $\LS\setminus\LK$}\label{sec:filling2}
In this section, we present a family of lens spaces in $\LS\setminus\LK$, i.e., those that can bound a compact, simply-connected, smooth 4-manifold with $b_2=b_2^+=1$ but do not bound such a $4$-manifold obtained from a single $0$- and $2$-handle. 

For integers $m\geq 3$ and $n\geq 1$, define \[
    p_{m,n}:=4m^2n+5m^2-4m-4\quad\text{and}\quad q_{m,n}:=m^2n+m^2-m-1.\]
We have \[
\frac{p_{m,n}}{q_{m,n}}=\left[ 5,\left[2\right]^{n-1},3,\left[2\right]^{m-3},m+2 \right],\]
where $\left[2\right]^k$ denotes a sequence of $k$ repetitions of the entry $2$, i.e., $[\underbrace{2,\dots,2}_{k}]$. 

We first show that $L(p_{m,n},q_{m,n})\in\LS$ for each $m\geq 3$ and $n\geq 1$ with $m$ odd, by constructing the desired $\mathbb{Q}$-homology $\mathbb{CP}^2$'s.

\begin{proposition}\label{prop:construction2} For each $m\geq 3$ and $n\geq 1$ with $m$ odd, the lens space $L(p_{m,n},q_{m,n})$ bounds a compact, oriented, simply-connected, smooth $4$-manifold $W_{m,n}$ with $b_2(W_{m,n})=b_2^+(W_{m,n})=1$.     
\end{proposition}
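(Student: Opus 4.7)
My plan is to realize $W_{m,n}$ as the complement of a cone neighborhood of a cyclic singularity inside an explicitly constructed $\mathbb{Q}$-homology $\mathbb{CP}^2$, following the general scheme of Subsection \ref{subsec:qhcp2}. Since $L(p_{m,n},p_{m,n}-q_{m,n})$ is orientation-reversing homeomorphic to $L(p_{m,n},q_{m,n})$, it suffices to build a $\mathbb{Q}$-homology $\mathbb{CP}^2$ $S_{m,n}$ whose unique singularity is cyclic of type $(p_{m,n},\,p_{m,n}-q_{m,n})$; removing the cone neighborhood then automatically yields a smooth $4$-manifold with the correct boundary, $b_1=0$, and $b_2=b_2^+=1$. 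The only property requiring further argument beyond the construction itself is simple-connectivity.

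First, I would describe a small arrangement of rational curves on $\mathbb{CP}^2$ (or on a Hirzebruch surface $\Sigma_d$ for some suitable $d$), together with a prescribed sequence of blow-ups at specified intersection points. The aim is for the total transform to contain a linear chain $E$ of smooth rational curves whose weighted dual graph realizes the Hirzebruch--Jung continued fraction
\[
\frac{p_{m,n}}{p_{m,n}-q_{m,n}}=\bigl[\,2,\,2,\,2,\,n+2,\,m,\,[2]^{m}\,\bigr],
\]
i.e., the minimal resolution graph of the singularity of type $(p_{m,n},\,p_{m,n}-q_{m,n})$. The number of blow-ups would be chosen so that the final smooth projective surface $\widetilde{S}_{m,n}$ has $b_2=m+6$, exactly one more than the length of $E$. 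Blowing down $E$ then produces a normal projective surface $S_{m,n}$ with $b_2(S_{m,n})=1$ and the desired unique cyclic singularity; since $\widetilde{S}_{m,n}$ is simply-connected and contracting a chain of rational curves does not introduce rational $H_1$, a quick Mayer--Vietoris check shows $H_1(S_{m,n};\mathbb{Q})=0$, confirming that $S_{m,n}$ is a $\mathbb{Q}$-homology $\mathbb{CP}^2$.

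Setting $W_{m,n}:=\widetilde{S}_{m,n}\setminus\nu(E)$, where $\nu(E)$ is a plumbed regular neighborhood of $E$, the boundary identification is immediate and Corollary \ref{cor:linking_form} gives $b_2=b_2^+=1$ with intersection form $(p_{m,n})$. The remaining---and in my view the only genuinely delicate---step is to verify $\pi_1(W_{m,n})=1$. Applying Van Kampen to the decomposition $\widetilde{S}_{m,n}=W_{m,n}\cup\nu(E)$, and using $\pi_1(\widetilde{S}_{m,n})=1=\pi_1(\nu(E))$, the problem reduces to showing that the meridian generator of $\pi_1(\partial\nu(E))\cong\mathbb{Z}/p_{m,n}$ becomes nullhomotopic in $W_{m,n}$. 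The natural way to witness this is to exhibit an auxiliary rational curve in the configuration whose proper transform lies entirely in $W_{m,n}$ and meets the chain transversely in a single point before contraction; such a curve provides the desired meridional disk. I expect the hypothesis ``$m$ odd'' to enter precisely here as the arithmetic condition under which such an auxiliary curve can be placed in the configuration, and pinning down its existence will be the main obstacle.
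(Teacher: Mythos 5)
Your overall framework matches the paper's: construct a $\mathbb{Q}$-homology $\mathbb{CP}^2$ with a unique cyclic singularity of type $(p_{m,n},p_{m,n}-q_{m,n})$ and take $W_{m,n}$ to be the complement of the cone. You also correctly pinpoint the genuinely delicate step, namely that the singular surface's smooth locus must be shown simply-connected. But the proposal has two gaps, one of execution and one of method.

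First, the execution gap: you never actually exhibit the configuration. The paper produces it concretely by starting with the union of the zero section and two fibers in $\Sigma_n$, then blowing up $m+4$ times (Figure~\ref{fig:enter-label3}); the resulting chain $D$ has length $m+5$, one less than $b_2(\tilde S_{m,n})=m+6$, exactly as you predict. You state these numerics but leave the construction, and hence the whole proof, unwritten.

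Second, the Van Kampen route you sketch is weaker than you realize, and the hypothesis ``$m$ odd'' does not enter where you expect. Exhibiting an auxiliary curve $C$ meeting the chain transversely once only produces a disk bounded by the meridian $\mu_{D_i}$ of the particular component $C$ hits. In $\pi_1(\partial\nu(E))\cong\mathbb{Z}_{p_{m,n}}$ the meridians $\mu_{D_i}$ are specific powers of a fixed generator, so killing one of them kills only the cyclic subgroup it generates; that is $\mathbb{Z}_{p_{m,n}}$ only if $C$ lands at the right end of the chain, which in the paper's construction it does not (the two $(-1)$-curves $E_1,E_2$ meet interior vertices $D_2'$ and $D_2$). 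The paper circumvents all this by observing that the smooth locus $S_{m,n}^0$ contains $\Sigma_n\setminus(\text{section}+2\text{ fibers})\cong\mathbb{C}^*\times\mathbb{C}$ as a Zariski-open subset, so $\pi_1(S_{m,n}^0)$ is a quotient of $\mathbb{Z}$ and in particular abelian; the problem then reduces to computing $H_1$, which is done via the Miyanishi--Zhang exact sequence $0\to H_2(D;\mathbb{Z})\to\operatorname{Pic}(\tilde S_{m,n})\to H^2(S_{m,n}^0;\mathbb{Z})\to 0$. In that computation the two linear equivalences of a fiber $F$ yield the relation $F=2E_1=mE_2$ in the quotient, and it is $\gcd(2,m)=1$ that forces the torsion to vanish; this, and not the bare existence of an auxiliary curve, is precisely where ``$m$ odd'' enters. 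Your plan could in principle be pushed through, but you would still need the abelianity observation (or some substitute for it) to control $\pi_1(W_{m,n})$ itself rather than merely the image of $\pi_1(\partial\nu(E))$, and you would need to track which meridians the auxiliary curves produce and what subgroup they generate.
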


\begin{proof} We show that there exists a $\mathbb{Q}$-homology $\mathbb{CP}^2$ having a unique cyclic singularity of type $(p_{m,n},p_{m,n}-q_{m,n})$ (Section \ref{subsec:qhcp2}). Note that \[
p_{m,n}-q_{m,n}=3m^2n+4m^2-3m-3
\]
and that \[
\frac{p_{m,n}}{p_{m,n}-q_{m,n}}=\left[2,2,2,n+2,m,\left[2\right]^m\right].
\]

Consider a configuration of the union of the zero section and two fibers in the Hirzebruch surface $\Sigma_n$ of degree $n$, as shown in Figure \ref{fig:enter-label3} (a). Blowing-up each of the two marked intersection points twice results in the configuration of rational curves depicted in Figure \ref{fig:enter-label3} (b). Next, blow up the marked intersection point $m-2$ times, followed by a blow-up at each of the two final $(-1)$-curves. This yields the configuration of rational curves shown in Figure \ref{fig:enter-label3} (c).

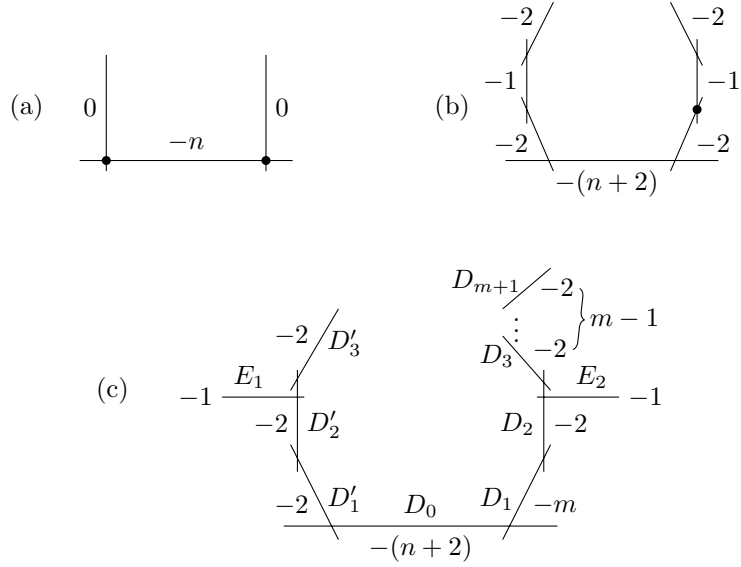
\begin{figure}[h]
\centering
\begin{tikzpicture}[scale=.7]
    \node at (-3,1) {(a)};
    \draw (-2,0)--(2,0) (1.5,-0.2)--(1.5,2)  (-1.5,-0.2)--(-1.5,2) ;
    \draw (0,0) node[above]{$-n$};
    \draw (-1.5,1) node[left]{$0$};
    \draw (1.5,1) node[right]{$0$};
    \draw (1.5,0) node[circle, fill, inner sep=1.2pt, black]{};
    \draw (-1.5,0) node[circle, fill, inner sep=1.2pt, black]{};

    \begin{scope}[shift={(2,0)}]
    \node at (3,1) {(b)};
    \draw (4,0)--(8,0) (7.1,-0.2)--(7.7,1.2) (4.3,1.2)--(4.9,-0.2) (7.7,1.8)--(7.1,3) (4.3,1.8)--(4.9,3) (4.4,0.7)--(4.4,2.3) (7.6,0.7)--(7.6,2.3);
    \draw (5.9,0) node[below]{$-(n+2)$};
    \draw (4.6,0.3) node[left]{$-2$};
    \draw (7.4,0.3) node[right]{$-2$};
    \draw (4.4,1.5) node[left]{$-1$};
    \draw (7.6,1.5) node[right]{$-1$};
    \draw (4.7,2.7) node[left]{$-2$};
    \draw (7.3,2.7) node[right]{$-2$};
    \draw (7.6,0.9666667) node[circle, fill, inner sep=1.2pt, black]{};
    \end{scope}
\end{tikzpicture}
\vspace{20pt}

\begin{tikzpicture}[scale=.9]
    \node at (-5.5,2) {(c)};
        \draw (-3,0)--(1,0) (0.2,-0.2)--(0.9,1.2) (-2.2,-0.2)--(-2.9,1.2) (-2.8,0.8)--(-2.8,2.3) (-2.9,2)--(-2.2,3.2) (0.8,0.8)--(0.8,2.3) (0.9,2)--(0.2,2.8) (0.2,3.2)--(0.9,3.8) (-2.7,1.9)--(-3.9,1.9) (0.7,1.9)--(1.9,1.9);
        \draw (0.4,3) node{$\vdots$};
        \draw (-2.5,0.3) node[left]{$-2$};
\draw (-1,0) node[below]{$-(n+2)$};
\draw (1.9,1.9) node[right]{$-1$};
\draw (-3.9,1.9) node[left]{$-1$};
\draw (0.5,0.3) node[right]{$-m$};
\draw (0.8,1.5) node[right]{$-2$};
\draw (-2.8,1.5) node[left]{$-2$};
\draw (-2.5,2.8) node[left]{$-2$};
\draw (0.6,3.5) node[right]{$-2$};
\draw (0.5,2.6) node[right]{$-2$};

\draw (-1,0) node[above]{$D_0$};
\draw (-2.5,0.4) node[right]{$D_1'$};
\draw (-2.8,1.5) node[right]{$D_2'$};
\draw (-3.5,1.9) node[above]{$E_1$};
\draw (-2.5,2.7) node[right]{$D_3'$};
\draw (0.5,0.4) node[left]{$D_1$};
\draw (0.8,1.5) node[left]{$D_2$};
\draw (1.5,1.9) node[above]{$E_2$};
\draw (0.6,3.6) node[left]{$D_{m+1}$};
\draw (0.5,2.5) node[left]{$D_3$};
\draw [decorate,decoration={brace,amplitude=3pt},xshift=5pt]
	(1.15,3.5) -- (1.05,2.6) node [black,midway,xshift=18pt,yshift=0pt] 
	{$m-1$};
    \end{tikzpicture}
    \caption{Configurations over $m+4$ blow-ups from Hirzebruch surface $\Sigma_n$.}
    \label{fig:enter-label3}
\end{figure}

Let $\tilde{S}_{m,n}$ denote the resulting surface, obtained by blowing up in total $m+4$ times from $\Sigma_n$. Label the rational curves as shown in Figure \ref{fig:enter-label3} (c), and define \[
D:=(D_1'+D_2'+D_3')+D_0+(D_1+\cdots+D_{m+1}).\] 
Let $\pi\colon \tilde{S}_{m,n}\to S_{m,n}$ denote the contraction of $D$. Then $S_{m,n}$ is a $\mathbb{Q}$-homology $\mathbb{CP}^2$ with a unique cyclic singularity of type $(p_{m,n},p_{m,n}-q_{m,n})$, and $\tilde{S}_{m,n}$ is its minimal resolution. Let $W_{m,n}$ denote the complement in $S_{m,n}$ of the cone neighborhood of the singularity. Our task reduces to computing $\pi_1(W_{m,n})$, or equivalently, $\pi_1(S_{m,n}^0)$, where $S_{m,n}^0=S_{m,n}\setminus\textup{Sing}(S_{m,n})=\tilde{S}_{m,n}\setminus D$ is the smooth locus of $S_{m,n}$. 

It follows from the construction that $S_{m,n}^0$ contains $\Sigma_n\setminus (\textup{section}+\textup{2 fibers})\cong \mathbb{C}^*\times \mathbb{C}$ as a Zariski open subset, where $\mathbb{C}^*:=\mathbb{C}\setminus\{0\}$. Therefore, $\pi_1(S_{m,n}^0)$ is a quotient of $\pi_1(\mathbb{C}^*\times \mathbb{C})\cong \mathbb{Z}$, and, in particular, $\pi_1(S_{m,n}^0)$ is abelian. Thus, it suffices to compute $H_1(S_{m,n}^0;\mathbb{Z})$. Note that there is a short exact sequence \cite[Lemma 2(2)]{Miyanishi-Zhang-1988}: \[
    0\to H_2(D;\mathbb{Z})\to \textup{Pic}(\tilde{S}_{m,n})\to H^2(S_{m,n}^0;\mathbb{Z})\to 0.
\]

Let $\Phi\colon \tilde{S}_{m,n}\to \mathbb{CP}^1$ denote the vertical $\mathbb{CP}^1$-fibration given in the configuration. Let $F$ be a fiber of $\Phi$. Then the following linear equivalences hold: \[
F\sim D_3'+D_1'+2D_2'+2E_1 \sim D_{m+1}+D_1+2D_m+3D_{m-1}+\cdots+mD_2+mE_2.
\]
Note that $\textup{Pic}(\tilde{S}_{m,n})$ is a free abelian group of rank $m+6$ with a basis: \[
\{D_0, F, D_1',D_2',E_1,D_1,\dots,D_m,E_2 \}.
\]
In the quotient $\textup{Pic}(\tilde{S}_{m,n})/\langle D_0,D_1,\dots,D_{m+1},D_1',D_2',D_3'\rangle$, we have the relation $F=2E_1=mE_2$. Therefore, \[
H^2(S^0_{m,n};\mathbb{Z})=\begin{cases}
    \mathbb{Z} & \textrm{if $m$ is odd}, \\
    \mathbb{Z}\oplus \mathbb{Z}_2 & \textrm{if $m$ is even}. \end{cases}
\]
It follows that  \[
\pi_1(S^0_{m,n})\cong H_1(S^0_{m,n};\mathbb{Z})=\begin{cases}
    0 & \textrm{if $m$ is odd}, \\
    \mathbb{Z}_2 & \textrm{if $m$ is even}.
\end{cases}
\]  
In particular, $W_{m,n}$ is simply-connected when $m$ is odd, as desired. 
\end{proof}

\begin{remark} Observe that $p_{m,n}$ is odd when $m$ is odd. The simple connectivity of $W_{m,n}$ for odd $m$ can be also established using the topological argument provided in the proof of \cite[Theorem 3]{Lee-Park-2007} (see the proof of Proposition \ref{prop:Ballinger_construction}). \end{remark}

Next, we analyze the lattice embeddings and apply the changemaker criterion (Theorem \ref{thm:changemaker}) to show that $L(p_{m,n},q_{m,n})\notin \LS$ under a mild condition on $(m,n)$, thereby proving Theorem \ref{thm:filling2}.

\begin{proposition}\label{prop:changemaker_application} For each $m\geq 8$ and $n\geq 7$ with $m$ odd, the lens space $L(p_{m,n},q_{m,n})$ does not bound a compact, oriented, smooth $4$-manifold with $b_2=b_2^+=1$ built from a single $0$- and $2$-handle.
\end{proposition}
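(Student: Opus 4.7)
The plan is to apply Greene's changemaker criterion (Theorem~\ref{thm:changemaker}) and derive a contradiction. A compact smooth $4$-manifold with $b_2=b_2^+=1$ built from a single $0$- and $2$-handle is precisely the trace $W_p(K)$ of a positive integer surgery on some knot $K\subset S^3$. Hence, if $L(p_{m,n},q_{m,n})\in \LK$, then Theorem~\ref{thm:changemaker} yields a full-rank embedding $\iota: Q_{X(p_{m,n},q_{m,n})}\hookrightarrow -\mathbb{Z}^{n+m+1}$ whose orthogonal complement is generated by a changemaker $\sigma=(\sigma_0,\dots,\sigma_{n+m})$ with $\sum_i\sigma_i^2=p_{m,n}$. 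The goal is to show that no such embedding can exist.

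Let $v_1,\dots,v_{n+m}$ denote the plumbing basis dictated by the continued fraction $[5,[2]^{n-1},3,[2]^{m-3},m+2]$. Under $\iota$, each $v_i$ becomes an integer combination of $e_0,\dots,e_{n+m}$ whose support has size at most $a_i=-v_i\cdot v_i$, and the relations $v_i\cdot v_{i+1}=1$, $v_i\cdot v_j=0$ for $|i-j|>1$, and $\sigma\cdot v_i=0$ must all hold. Each $(-2)$-vertex satisfies $v_i = \pm e_j \pm e_k$, and the chain relations force the two long $(-2)$-runs to ``telescope'' canonically along a sequence of basis indices, so these subchains are rigid up to an initial choice of endpoints. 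I would then enumerate the finitely many admissible support patterns at the three anomalous vertices $v_1$ (weight $-5$), $v_{n+1}$ (weight $-3$), and $v_{n+m}$ (weight $-(m+2)$), and propagate each pattern across its adjacent $(-2)$-chain to extract the forced coordinates of $\sigma$.

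The main obstacle will be the combinatorial case analysis: for each candidate configuration, one must show that either the orthogonality relations $\sigma\cdot v_i = 0$ and the changemaker inequality $\sigma_i \leq \sigma_0+\cdots+\sigma_{i-1}+1$ are jointly inconsistent, or that the resulting $\sigma$ fails the norm equation $\sum_i\sigma_i^2 = 4m^2n+5m^2-4m-4$. The hypotheses $m\geq 8$ and $n\geq 7$ should ensure that the $(-2)$-chains are long enough that the constraints imposed at opposite ends of each chain propagate all the way through without collapse, which is essential for the contradiction to hold uniformly in the family. I expect to lean on the structural reduction lemmas of \cite{Greene-2013} and the changemaker analyses of \cite{Greene-2015} to cut the case count down to a short explicit verification.
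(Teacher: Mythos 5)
Your plan correctly identifies the paper's method: apply Greene's changemaker criterion, standardize the two long $(-2)$-chains, and propagate constraints from the three exceptional vertices. This is exactly what the paper does. Two points, however, deserve attention.

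First, there is an off-by-one error in the ambient rank. The continued fraction $[5,[2]^{n-1},3,[2]^{m-3},m+2]$ has $n+m-1$ entries, so $b_2(X(p_{m,n},q_{m,n}))=n+m-1$ and the embedding target is $-\mathbb{Z}^{m+n}$, not $-\mathbb{Z}^{m+n+1}$.

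Second, and more substantively, the whole content of the argument lies in the case analysis you defer, and it is worth knowing what you will actually find there. Embeddings $Q_{X(p_{m,n},q_{m,n})}\hookrightarrow -\mathbb{Z}^{m+n}$ do exist, and in fact the orthogonal complement generator always has the correct square $-p_{m,n}$, so your anticipated norm-equation obstruction never fires. The only obstruction is the changemaker condition itself: in the generic case the generator's coefficients all have absolute value at least $2$ (using $m$ odd and $m\geq 8$), so no signed permutation can bring it to a vector with $\sigma_0=1$. There is also a subtle second branch your plan does not foresee: whenever the Diophantine equation $(4n+5)d^2=m+1$ admits a nonzero integer solution $d$, a second admissible form of $\iota(w_3)$ produces a second embedding, whose orthogonal complement generator must be checked separately; the argument there crucially uses $d\neq\pm1$, forced by $m$ odd. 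The hypotheses $m\geq 8$ and $n\geq 7$ are indeed used to collapse the coefficients of $\iota(w_1)$ and $\iota(w_2)$ along the long $(-2)$-chains, as you anticipate. In short, your outline matches the paper's strategy, but a complete proof requires carrying out this explicit, nontrivial enumeration and the exceptional Diophantine branch, rather than citing general structural reductions.
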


\begin{proof} We show that the lattice $Q_{X(p_{m,n},q_{m,n})}$ cannot embed as the orthogonal complement of a changemaker with square $-p_{m,n}$ in $-\mathbb{Z}^{b_2(X(p_{m,n},q_{m,n}))+1}=-\mathbb{Z}^{m+n}$ (Theorem \ref{thm:changemaker}). Suppose that $\iota\colon Q_{X(p_{m,n},q_{m,n})} \hookrightarrow -\mathbb{Z}^{m+n}$ is an embedding. Let $w_1,u_1,\dots,u_{n-1},w_2,v_1,\dots,v_{m-3},w_3 \in H_2(X(p_{m,n},q_{m,n});\mathbb{Z})$ denote the homology classes of the spheres corresponding to each vertex in the plumbing graph, as illustrated in Figure \ref{fig:plumbing_graph_of_X(p_m,n,q_m,n)2}.

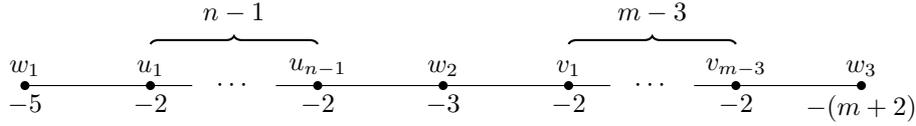
\begin{figure}[h]
\centering
\begin{tikzpicture}[scale=1.1]
\draw (-1,0) node[circle, fill, inner sep=1.2pt, black]{};
\draw (1,0) node[circle, fill, inner sep=1.2pt, black]{};
\draw (2.5,0) node[circle, fill, inner sep=1.2pt, black]{};
\draw (4,0) node[circle, fill, inner sep=1.2pt, black]{};
\draw (6,0) node[circle, fill, inner sep=1.2pt, black]{};
\draw (-2.5,0) node[circle, fill, inner sep=1.2pt, black]{};
\draw (7.5,0) node[circle, fill, inner sep=1.2pt, black]{};

\draw (-1,0) node[below]{$-2$};
\draw (1,0) node[below]{$-2$};
\draw (2.5,0) node[below]{$-3$};
\draw (4,0) node[below]{$-2$};
\draw (6,0) node[below]{$-2$};
\draw (-2.5,0) node[below]{$-5$};
\draw (7.5,0) node[below]{$-(m+2)$};

\draw (-1,0) node[above]{$u_1$};
\draw (1,0) node[above]{$u_{n-1}$};
\draw (2.5,0) node[above]{$w_2$};
\draw (4,0) node[above]{$v_1$};
\draw (6,0) node[above]{$v_{m-3}$};
\draw (-2.5,0) node[above]{$w_1$};
\draw (7.5,0) node[above]{$w_3$};

\draw (0,0) node{$\cdots$};
\draw (5,0) node{$\cdots$};

\draw (-2.5,0)--(-0.5,0) (0.5,0)--(1,0)  (1,0)--(4.5,0) (5.5,0)--(7.5,0) ;
\draw [thick,decorate,decoration={brace,amplitude=3pt},xshift=0pt]
	(-1,0.5) -- (1,0.5) node [black,midway,xshift=0pt,yshift=12pt] 
	{$n-1$};
 \draw [thick,decorate,decoration={brace,amplitude=3pt},xshift=0pt]
	(4,0.5) -- (6,0.5) node [black,midway,xshift=0pt,yshift=12pt] 
	{$m-3$};
\end{tikzpicture}
\caption{The plumbing graph of $X(p_{m,n},q_{m,n})$}
\label{fig:plumbing_graph_of_X(p_m,n,q_m,n)2}
\end{figure}

Since $\iota(u_{1})^2=-2$,  we have $\iota(u_{1})=\pm e_i\pm e_j$ for some $1\leq i<j\leq m+n$. After a change of basis (i.e., an automorphism in $\textup{Aut}(-\mathbb{Z}^{m+n})=\textup{GL}(m+n,\mathbb{Z})\cap \textup{O}(m+n)$), we may assume $\iota(u_{1})=e_1-e_2$. Next, since $\iota(u_2)^2=-2$, we also have $\iota(u_{2})=\pm e_i\pm e_j$ for some $1\leq i<j\leq m+n$. Given that $\iota(u_1)\cdot \iota(u_2)=1$, it follows that $|\{1,2\}\cap \{i,j\}|=1$, implying that $i\leq 2$ and $j>2$. Thus, after another change of basis, we may assume $\iota(u_2)=e_2-e_3$. Next, write $\iota(u_3)=\pm e_i\pm e_j$ with $1\leq i<j\leq m+n$. Since $\iota(u_2)\cdot \iota(u_3)=1$, we must have $|\{2,3\}\cap \{i,j\}|=1$. If $\{2,3\}\cap \{i,j\}=\{2\}$, then $\iota(u_1)\cdot \iota(u_3)=0$ implies $\iota(u_3)=-e_1-e_2$. However, this leads to a contradiction since: \[
1=\iota(w_1)\cdot \iota(u_1)=\iota(w_1)\cdot (e_1-e_2)\equiv \iota(w_1)\cdot (-e_1-e_2) = \iota(w_1)\cdot \iota(u_3)=0\mod 2,\]
Thus, we must have $\{2,3\}\cap \{i,j\}=\{3\}$, meaning $i=3$ and $\iota(u_3)=e_3\pm e_j$. After another basis change, we may assume that $\iota(u_3)=e_3-e_4$. Continuing this process, we can assume that $\iota(u_\ell)=e_\ell-e_{\ell+1}$ for each $\ell=1,\dots,n-1$. It follows that $\iota(v_i)\cdot e_j=0$ for all $i=1,\dots,m-3$ and $j=1,\dots,n$. Using a similar argument as above, we can assume $\iota(v_\ell)=e_{n+\ell}-e_{n+1+\ell}$ for each $\ell=1,\dots,m-3$. 

Next, consider $\iota(w_2)$. Write $\iota(w_2)=\sum_{i=1}^{m+n} a_ie_i$ with $\sum_ia_i^2=3$. Considering $w_2\cdot u_\ell$'s and $w_2\cdot v_\ell$'s, we obtain $a_1=\cdots=a_{n-1}=a_n-1$ and $a_{n+1}+1=a_{n+2}=\cdots=a_{n+m-2}$. Therefore, if $n\geq 5$ and $m\geq 7$, we must have $a_1=\cdots=a_{n-1}=0$ and $a_{n+2}=\cdots=a_{n+m-2}=0$. Thus, $\iota(w_2)$ takes the form: $\iota(w_2)=e_n-e_{n+1}+a_{m+n-1}e_{m+n-1}+a_{m+n}e_{m+n}$ with $a_{m+n-1}^2+a_{m+n}^2=1$. After a change of basis, we may assume that $\iota(w_2)=e_n-e_{n+1}+e_{m+n-1}$.

Now, write $\iota(w_1)=\sum_{i=1}^{m+n} b_ie_i$ with $\sum_i b_i^2=5$. From the given intersection relations, we have $b_1+1=b_2=\cdots=b_n$ and $b_{n+1}=\cdots=b_{n+m-2}$. Thus, if $n\geq 7$ and $m\geq 8$, we must have $b_2=\cdots=b_n=0$ and $b_{n+1}=\cdots=b_{n+m-2}=0$. Therefore, $\iota(w_1)=-e_1+b_{n+m-1}e_{n+m-1}+b_{n+m}e_{n+m}$ with the condition $b_{n+m-1}^2+b_{n+m}^2=4$. Since $w_1\cdot w_2=0$, it follows that $b_{n+m-1}=0$ and $b_{n+m}=\pm 2$. We may assume that $\iota(w_1)=-e_1+2e_{n+m}$.

Finally, write $\iota(w_3)=\sum_{i=1}^{m+n}c_ie_i$ with $\sum_i c_i^2=m+2$. The coefficients $c_i$ must satisfy the following relations: 
$a:=c_1=\cdots=c_n, b:=c_{n+1}=\cdots=c_{n+m-3}, c_{n+m-2}=b+1, c_{n+m-1}=b-a,\text{ and } d:=c_{n+m}=a/2.$ 
Thus, we have \[
m+2=\sum_{i=1}^{m+n}c_i^2 = 4nd^2+(m-3)b^2+(b+1)^2+(b-2d)^2+d^2 \]
Since the right-hand side is $\geq (m-3)b^2\geq 4(m-3)>m+2$ if $m\geq 5$, we must have $b=\pm 1$ or $b=0$. If $b=-1$, then $(4n+5)d^2+4d=4$, which is clearly impossible. If $b=1$, then $d=0$ and we have: \[
\iota(w_3)=(e_{n+1}+\cdots+e_{n+m-3})+2e_{n+m-2}+e_{n+m-1}.\]
This gives an embedding $Q_{X(p_{m,n},q_{m,n})}\hookrightarrow -\mathbb{Z}^{m+n}$ as depicted in Figure \ref{fig:embedding_of_X(p_m,n,q_m,n)2} (a). Assuming that $m$ is odd, the orthogonal complement is generated by the vector:\[
2m(e_1+\cdots+e_n)+2(e_{n+1}+\cdots+e_{n+m-2})-2(m-1)e_{n+m-1}+me_{n+m}.\] 
This vector cannot be mapped to a changemaker under any automorphism of $-\mathbb{Z}^{m+n}$. 

If $b=0$, then $(4n+5)d^2=m+1$. Assuming that such an (nonzero) integer $d$ exists, we have: \[
\iota(w_3)=2d(e_1+\cdots+e_n)+e_{n+m-2}-2de_{n+m-1}+de_{n+m},
\]
and this gives an embedding $Q_{X(p_{m,n},q_{m,n})}\hookrightarrow -\mathbb{Z}^{m+n}$ as depicted in Figure \ref{fig:embedding_of_X(p_m,n,q_m,n)2} (b). 

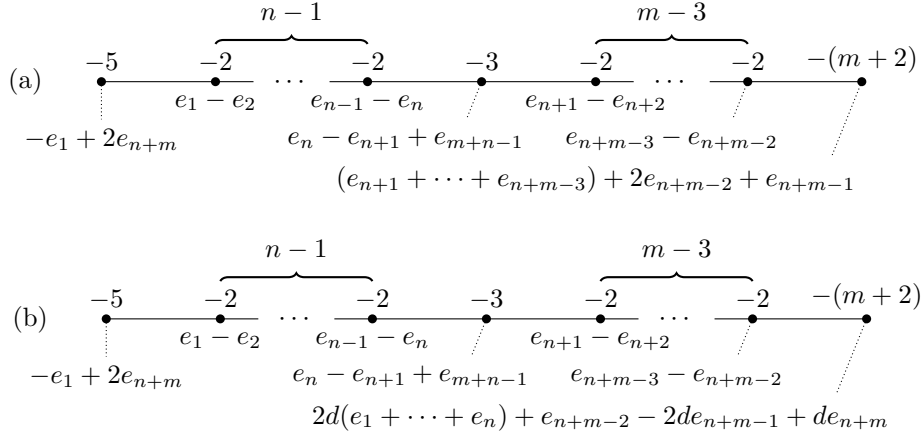
\begin{figure}[ht]
\centering
\begin{tikzpicture}[scale=1]
\node at (-3.5, 0) {(a)};
\draw (-1,0) node[circle, fill, inner sep=1.2pt, black]{};
\draw (1,0) node[circle, fill, inner sep=1.2pt, black]{};
\draw (2.5,0) node[circle, fill, inner sep=1.2pt, black]{};
\draw (4,0) node[circle, fill, inner sep=1.2pt, black]{};
\draw (6,0) node[circle, fill, inner sep=1.2pt, black]{};
\draw (-2.5,0) node[circle, fill, inner sep=1.2pt, black]{};
\draw (7.5,0) node[circle, fill, inner sep=1.2pt, black]{};

\draw (-1,0) node[above]{$-2$};
\draw (1,0) node[above]{$-2$};
\draw (2.5,0) node[above]{$-3$};
\draw (4,0) node[above]{$-2$};
\draw (6,0) node[above]{$-2$};
\draw (-2.5,0) node[above]{$-5$};
\draw (7.5,0) node[above]{$-(m+2)$};

\draw (-2.5,-0.5) node[below]{$-e_1+2e_{n+m}$};
\draw (-1,0) node[below]{$e_1-e_2$};
\draw (1,0) node[below]{$e_{n-1}-e_n$};
\draw (1.5,-0.5) node[below]{$e_n-e_{n+1}+e_{m+n-1}$};
\draw (4,0) node[below]{$e_{n+1}-e_{n+2}$};
\draw (5,-0.5) node[below]{$e_{n+m-3}-e_{n+m-2}$};
\draw (4,-1) node[below]{$(e_{n+1}+\cdots+e_{n+m-3})+2e_{n+m-2}+e_{n+m-1}$};

\draw (0,0) node{$\cdots$};
\draw (5,0) node{$\cdots$};

\draw [thick,decorate,decoration={brace,amplitude=3pt},xshift=0pt]
	(-1,0.5) -- (1,0.5) node [black,midway,xshift=0pt,yshift=12pt] 
	{$n-1$};
 \draw [thick,decorate,decoration={brace,amplitude=3pt},xshift=0pt]
	(4,0.5) -- (6,0.5) node [black,midway,xshift=0pt,yshift=12pt] 
	{$m-3$};

\draw[densely dotted] (-2.5,0)--(-2.5,-0.5) (2.5,0)--(2.3,-0.5) (6,0)--(5.8,-0.5) (7.5,0)--(7.1,-1);

\draw (-2.5,0)--(-0.5,0) (0.5,0)--(1,0)  (1,0)--(4.5,0) (5.5,0)--(7.5,0) ;
\end{tikzpicture}
\vspace{10pt}

\begin{tikzpicture}[scale=1]
\node at (-3.5, 0) {(b)};
\draw (-1,0) node[circle, fill, inner sep=1.2pt, black]{};
\draw (1,0) node[circle, fill, inner sep=1.2pt, black]{};
\draw (2.5,0) node[circle, fill, inner sep=1.2pt, black]{};
\draw (4,0) node[circle, fill, inner sep=1.2pt, black]{};
\draw (6,0) node[circle, fill, inner sep=1.2pt, black]{};
\draw (-2.5,0) node[circle, fill, inner sep=1.2pt, black]{};
\draw (7.5,0) node[circle, fill, inner sep=1.2pt, black]{};

\draw (-1,0) node[above]{$-2$};
\draw (1,0) node[above]{$-2$};
\draw (2.5,0) node[above]{$-3$};
\draw (4,0) node[above]{$-2$};
\draw (6,0) node[above]{$-2$};
\draw (-2.5,0) node[above]{$-5$};
\draw (7.5,0) node[above]{$-(m+2)$};

\draw (-2.5,-0.5) node[below]{$-e_1+2e_{n+m}$};
\draw (-1,0) node[below]{$e_1-e_2$};
\draw (1,0) node[below]{$e_{n-1}-e_n$};
\draw (1.5,-0.5) node[below]{$e_n-e_{n+1}+e_{m+n-1}$};
\draw (4,0) node[below]{$e_{n+1}-e_{n+2}$};
\draw (5,-0.5) node[below]{$e_{n+m-3}-e_{n+m-2}$};
\draw (4,-1) node[below]{$2d(e_1+\cdots+e_n)+e_{n+m-2}-2de_{n+m-1}+de_{n+m}$};

\draw (0,0) node{$\cdots$};
\draw (5,0) node{$\cdots$};

\draw [thick,decorate,decoration={brace,amplitude=3pt},xshift=0pt]
	(-1,0.5) -- (1,0.5) node [black,midway,xshift=0pt,yshift=12pt] 
	{$n-1$};
 \draw [thick,decorate,decoration={brace,amplitude=3pt},xshift=0pt]
	(4,0.5) -- (6,0.5) node [black,midway,xshift=0pt,yshift=12pt] 
	{$m-3$};

\draw[densely dotted] (-2.5,0)--(-2.5,-0.5) (2.5,0)--(2.3,-0.5) (6,0)--(5.8,-0.5) (7.5,0)--(7.1,-1);

\draw (-2.5,0)--(-0.5,0) (0.5,0)--(1,0)  (1,0)--(4.5,0) (5.5,0)--(7.5,0) ;
\end{tikzpicture}
\caption{Embeddings of $Q_{X(p_{m,n},q_{m,n})}$ into $-\mathbb{Z}^{m+n}$; (a) for general case, (b) for case where $(4n+5)d^2=m+1$ has an integer solution $d$ }
\label{fig:embedding_of_X(p_m,n,q_m,n)2}
\end{figure}

Observe that the vector 
$\mathbf{v}:=2(2d-1)(e_1+\cdots+e_n)+(4n+5)d(e_{n+1}+\cdots+e_{n+m-2})+((4n+1)d+2)e_{n+m-1}+(2d-1)e_{n+m}$
is contained in the orthogonal complement, and that $\mathbf{v}^2=-p_{m,n}$. Also note that $d\neq \pm 1$ as we are assuming that $m$ is odd. Therefore, every coefficient of $\mathbf{v}$ has absolute value $>1$, so $\mathbf{v}$ cannot be mapped to a changemaker under any automorphism of $-\mathbb{Z}^{m+n}$. We conclude that the lattice $Q_{X(p_{m,n},q_{m,n})}$ cannot embed in $-\mathbb{Z}^{m+n}$ as the orthogonal complement of a changemaker with square $-p_{m,n}$, even in the case where $(4n+5)d^2=m+1$ has an integer solution $d$. 
\end{proof}

For the following family of lens space known to be in $\LK$, we can also construct simply-connected, smooth 4-manifolds with $b_2=b_2^+=1$ bounded by them, using $\mathbb{Q}$-homology $\mathbb{CP}^2$'s with corresponding cyclic singularities.

\begin{example}\label{example1}
For integers $m\geq 2$ and $n\geq 1$, define two relative prime integers $p_{m,n}$ and $q_{m,n}$ by
\[
p_{m,n}:=m^2n+m-1 \quad \textup{and} \quad q_{m,n}:=m^2n-m^2+m-1,
\]
so that we have \[
\frac{p_{m,n}}{q_{m,n}}=\left[ \left[2\right]^{n-1},m+3,\left[2\right]^{m-2} \right],
\]
and Figure \ref{fig:embedding_example} shows a lattice embedding $Q_{X(p_{m,n},q_{m,n})}\hookrightarrow -\mathbb{Z}^{m+n-1}$. The orthogonal complement is generated by the vector $(e_1+\cdots+e_{m-1})+m(e_m+\cdots+e_{m+n-1})$, which is clearly a changemaker. 

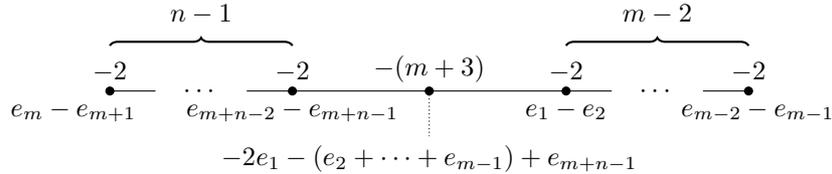
\begin{figure}[h]
\centering
\begin{tikzpicture}[scale=1.2]
    \draw (-1,0) node[circle, fill, inner sep=1.2pt, black]{};
    \draw (1,0) node[circle, fill, inner sep=1.2pt, black]{};
    \draw (2.5,0) node[circle, fill, inner sep=1.2pt, black]{};
    \draw (4,0) node[circle, fill, inner sep=1.2pt, black]{};
    \draw (6,0) node[circle, fill, inner sep=1.2pt, black]{};
    
    \draw (-1,0) node[above]{$-2$};
    \draw (1,0) node[above]{$-2$};
    \draw (2.5,0) node[above]{$-(m+3)$};
    \draw (4,0) node[above]{$-2$};
    \draw (6,0) node[above]{$-2$};
    
    \draw (-1.4,0) node[below]{$e_m-e_{m+1}$};
    \draw (1,0) node[below]{$e_{m+n-2}-e_{m+n-1}$};
    \draw (2.5,-0.5) node[below]{$-2e_1-(e_2+\cdots+e_{m-1})+e_{m+n-1}$};
    \draw (4,0) node[below]{$e_1-e_2$};
    \draw (6.1,0) node[below]{$e_{m-2}-e_{m-1}$};
    
    \draw (0,0) node{$\cdots$};
    \draw (5,0) node{$\cdots$};
    \draw[densely dotted] (2.5,0)--(2.5,-0.5);
    \draw (-1,0)--(-0.5,0) (0.5,0)--(1,0)  (1,0)--(4.5,0) (5.5,0)--(6,0) ;
    
    \draw [thick,decorate,decoration={brace,amplitude=3pt},xshift=0pt]
    	(-1,0.5) -- (1,0.5) node [black,midway,xshift=0pt,yshift=12pt] 
    	{$n-1$};
     \draw [thick,decorate,decoration={brace,amplitude=3pt},xshift=0pt]
    	(4,0.5) -- (6,0.5) node [black,midway,xshift=0pt,yshift=12pt] 
    	{$m-2$};
\end{tikzpicture}
\caption{An embedding of $Q_{X(p_{m,n},q_{m,n})}$ into $-\mathbb{Z}^{m+n-1}$ }
\label{fig:embedding_example}
\end{figure}

It follows from Theorem \ref{thm:changemaker2} that $(p_{m,n},q_{m,n})$ (or $(p_{m,n},q_{m,n}')$ where $q_{m,n}q'_{m,n}\equiv 1\mod p_{m,n}$) is contained in Berge's list. Hence, the lens space $L(p_{m,n},q_{m,n})$ is contained in $\LK$. In fact, by taking $k=m$ and $i=mn+1$, we see that $(p_{m,n},q_{m,n})$ is belongs to Berge's type I$_-$ \cite[Section 1.2]{Greene-2013}. Therefore, there exists a knot $K_{m,n}$ in $S^3$ such that $S^3_{p_{m,n}}(K_{m,n})\cong L(p_{m,n},q_{m,n})$. 

On the other hand, a smooth $4$-manifold $W_{m,n}$ with $\pi_1=1$, $b_2=b_2^+=1$, and boundary $L(p_{m,n},q_{m,n})$ can be obtained from a $\mathbb{Q}$-homology $\mathbb{CP}^2$ construction, as follows. Note that $p_{m,n}-q_{m,n}=m^2$, and that \[
\frac{p_{m,n}}{p_{m,n}-q_{m,n}}=\left[ n+1, \left[2\right]^m,m\right].\]

Start with a configuration given by the union of the zero section and a fiber in the Hirzebruch surface $\Sigma_n$ of degree $n$, as shown in Figure \ref{fig:example2} (a). By blowing up the marked intersection point twice, we obtain a configuration of rational curves depicted in Figure \ref{fig:example2} (b). Next, blow up the marked intersection point $m-2$ times, followed by a single blow-up at the final $(-1)$-curve, so that we have a configuration of rational curves shown in Figure \ref{fig:example2} (c). 

\begin{figure}[h]
    \centering
\begin{tikzpicture}[scale=0.8]
    \node at (0.5,-0.8) {(a)};
    \draw (-1.2,0)--(2,0) (1.5,-0.2)--(1.5,2);
    \draw (0,0) node[above]{$-n$};
    \draw (1.5,1) node[right]{$0$};
    \draw (1.5,0) node[circle, fill, inner sep=1.2pt, black]{};
\end{tikzpicture}\qquad
\begin{tikzpicture}[scale=0.9]
    \node at (6.5,-0.8) {(b)};
    \draw (4.5,0)--(8,0) (7.1,-0.2)--(7.7,1.2)  (7.7,1.8)--(7.1,3) (7.6,0.7)--(7.6,2.3);
    \draw (5.9,0) node[above]{$-(n+1)$};
    \draw (7.4,0.3) node[right]{$-2$};
    \draw (7.6,1.5) node[right]{$-1$};
    \draw (7.3,2.7) node[right]{$-2$};
    \draw (7.6,2) node[circle, fill, inner sep=1.2pt, black]{};
\end{tikzpicture}\qquad
\begin{tikzpicture}[scale=0.9]
    \node at (0.5,-0.8) {(c)};
        \draw (-2,0)--(3,0) (2.2,-0.2)--(2.9,1) (2.9,1.5)--(2.2,2.7) (2.2,2.4)--(2.9,3.6) (2.9,3.3)--(2.2,4.3) (2.45,3.1)--(3.5,2.5) ;
        \draw (2.7,1.4) node{$\vdots$};
        \draw (-1,0) node[above]{$-(n+1)$};
        \draw (2.5,0.4) node[left]{$-2$};
     \draw (2.5,2) node[left]{$-2$};
     \draw (2.5,2.9) node[left]{$-2$};
     \draw (3.5,2.5) node[right]{$-1$};
     \draw (2.3,4.2) node[right]{$-m$};
    \draw [decorate,decoration={brace,amplitude=3pt},xshift=0pt]
	(1.85,0.4) -- (1.85,2) node [black,midway,xshift=-18pt,yshift=0pt] 
	{$m-1$};
    \end{tikzpicture}
    \caption{Configurations over $m+1$ blow-ups from Hirzebruch surface $\Sigma_n$.}
    \label{fig:example2}
\end{figure}
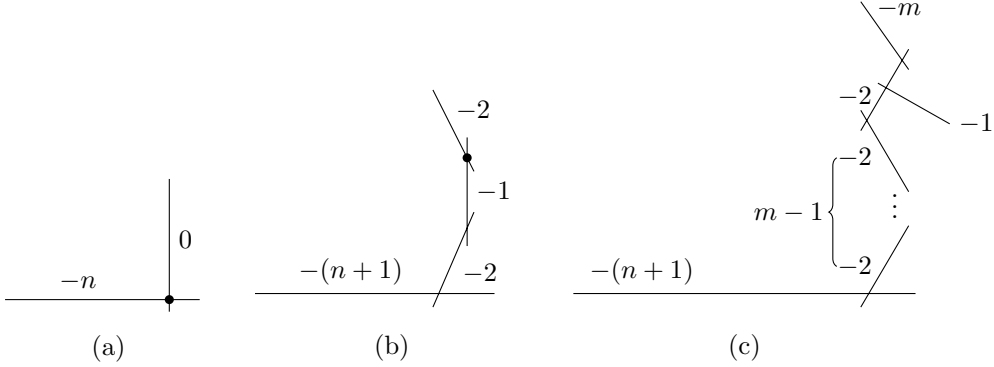

Let $\tilde{S}_{m,n}$ denote the resulting surface, obtained by blowing up $\Sigma_n$ a total of $m+1$ times. Let $D$ denote the union of all rational curves given in Figure \ref{fig:example2} (c), except for the $(-1)$-curve. (Thus, $D$ consists of $m+2$ rational curves.) Let $\pi\colon \tilde{S}_{m,n}\to S_{m,n}$ denote the contraction of $D$. Then the surface $S_{m,n}$ is a $\mathbb{Q}$-homology $\mathbb{CP}^2$ with a unique cyclic singularity of type $(p_{m,n},p_{m,n}-q_{m,n})$, and $\tilde{S}_{m,n}$ is its minimal resolution. Let $W_{m,n}$ denote the complement of the cone neighborhood of the singularity in $S_{m,n}$. Then $W_{m,n}$ is a smooth $4$-manifold with $b_1=0$, $b_2=b_2^+=1$, and $\partial W_{m,n}=L(p_{m,n},q_{m,n})$. 

Moreover, observe that the smooth locus $S_{m,n}^0$ (which is homotopy equivalent to $W_{m,n}$) contains $\Sigma_n \setminus (\textup{section}+\textup{fiber}) \cong \mathbb{C}^2$ as a Zariski open subset. Hence, $W_{m,n}$ is simply-connected.

Thus, we have two smooth $4$-manifolds, $W_{m,n}$ and $W_{p_{m,n}}(K_{m,n})$ (see Notation \ref{notation:knot_surgery}), both with $\pi_1=1$ and $b_2=b_2^+=1$, whose boundaries are the same $L(p_{m,n},q_{m,n})$. Determining whether the two $4$-manifolds, $W_{m,n}$ and $W_{p_{m,n}}(K_{m,n})$, are diffeomorphic would be an intriguing problem.

\begin{question}\label{question:example} Are $W_{m,n}$ and $W_{p_{m,n}}(K_{m,n})$ diffeomorphic?    
\end{question}
\end{example}

\smallskip

\section{Lens Spaces in $\LB\setminus \LS$}
\label{sec:filling1}
In this section, we explore lens spaces contained in $\LB\setminus \LS$, that is, those which bound a smooth 4-manifold with $b_1=0$ and $b_2=b_2^+=1$, but do not bound such a $4$-manifold under the additional constraint $\pi_1=1$. As noted in the introduction, $L(10,1)$ is an example of a lens space in $\LB\setminus \LS$. Here, we present an infinite family.

For integers $m,n\geq 0$, let \[
p_{m,n}:=(m+2)^2(m^2n+4mn+2m+4n+2)
\]
and \[
q_{m,n}:=m^4n+7m^3n+2m^3+17m^2n+8m^2+16mn+8m+4n+1.
\]
We have \[
\frac{p_{m,n}}{q_{m,n}}=\begin{cases}
    \left[ \left[ 2\right]^m, 2m+8, \left[2\right]^m \right] & \textrm{if $n=0$}, \\
    \left[\left[2\right]^m, m+5, \left[2\right]^{n-1},m+5, \left[2\right]^m \right] & \textrm{if $n\geq 1$}.
\end{cases}
\]

We first show that $L(p_{m,n},q_{m,n})\in \LB$, again by constructing appropriate $\mathbb{Q}$-homology $\mathbb{CP}^2$'s.

\begin{proposition}\label{prop:construction1} For each $m,n\geq 0$, the lens space $L(p_{m,n},q_{m,n})$ bounds a compact, oriented, smooth $4$-manifold $W_{m,n}$ such that $\pi_1(W_{m,n})\cong \mathbb{Z}_{m+2}$ and $b_2(W_{m,n})=b_2^+(W_{m,n})=1$.
\end{proposition}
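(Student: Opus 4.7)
The plan is to follow the construction strategy of Proposition \ref{prop:construction2}: build an explicit $\mathbb{Q}$-homology $\mathbb{CP}^2$ $S_{m,n}$ carrying a unique cyclic singularity of type $(p_{m,n},p_{m,n}-q_{m,n})$, and take $W_{m,n}$ to be the complement of a cone neighborhood of that singularity. Because the target resolution chain
\[
\frac{p_{m,n}}{p_{m,n}-q_{m,n}}=[m+2,\,[2]^{m+2},\,n+2,\,[2]^{m+2},\,m+2]
\]
is palindromic with endpoints of weight $-(m+2)$, I will look for a symmetric initial configuration. A natural candidate is the Hirzebruch surface $\Sigma_{m+2}$ equipped with both of its sections $C_0$ (with $C_0^2=-(m+2)$) and $C_\infty$ (with $C_\infty^2=+(m+2)$) together with two fibers $F_1,F_2$; a symmetric sequence of blow-ups at the corners $C_\infty\cap F_i$, enough to drop $C_\infty^2$ to $-(m+2)$ and produce the two $[2]^{m+2}$-chains, together with $n$ further blow-ups near one fiber to create the central vertex of self-intersection $-(n+2)$, should yield a rational surface $\tilde S_{m,n}$ containing the desired linear chain $D$. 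Contracting $D$ then yields $S_{m,n}$, and $W_{m,n}\simeq \tilde S_{m,n}\setminus D$ automatically satisfies $b_1=0$, $b_2=b_2^+=1$, and $\partial W_{m,n}=L(p_{m,n},q_{m,n})$ because $S_{m,n}$ is a rational homology $\mathbb{CP}^2$ with an isolated cyclic singularity.

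The substantive task is the computation of $\pi_1(W_{m,n})\cong \pi_1(S_{m,n}^0)$, where $S_{m,n}^0:=\tilde S_{m,n}\setminus D$ is the smooth locus. The key point, as in Proposition \ref{prop:construction2}, is that $S_{m,n}^0$ contains the Zariski open subset $\Sigma_{m+2}\setminus(C_0\cup C_\infty\cup F_1\cup F_2)\cong \mathbb{C}^*\times\mathbb{C}^*$, so $\pi_1(S_{m,n}^0)$ is a quotient of $\mathbb{Z}^2$; in particular it is abelian and coincides with $H_1(S_{m,n}^0;\mathbb{Z})$. The latter is computed via the Miyanishi--Zhang short exact sequence
\[
0\to H_2(D;\mathbb{Z})\to \mathrm{Pic}(\tilde S_{m,n})\to H^2(S_{m,n}^0;\mathbb{Z})\to 0,
\]
by choosing as basis of $\mathrm{Pic}(\tilde S_{m,n})$ the classes of $C_0$, $F$, and the exceptional divisors of all blow-ups, and then quotienting by the classes of the components of $D$. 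The underlying relation $C_\infty\sim C_0+(m+2)F$ in $\mathrm{Pic}(\Sigma_{m+2})$ is ultimately what forces a $\mathbb{Z}_{m+2}$-factor after the exceptional corrections are absorbed, and I expect all remaining generators to be killed by the chain relations, yielding $H_1(S_{m,n}^0;\mathbb{Z})\cong \mathbb{Z}_{m+2}$.

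The main obstacle is pinning down a precise blow-up sequence that simultaneously realizes exactly the self-intersection sequence $[-(m+2),[-2]^{m+2},-(n+2),[-2]^{m+2},-(m+2)]$ and collapses the Picard quotient to precisely $\mathbb{Z}_{m+2}$, with no spurious free or torsion summands. Unlike in Proposition \ref{prop:construction2}, whose chain endpoints have weight $-2$, here both sections of $\Sigma_{m+2}$ must participate in $D$, which makes the tracking of exceptional divisors and the resulting relations noticeably more delicate. The degenerate case $n=0$, where the central weight drops to $-2$, should either specialize from the general recipe or admit a minor separate treatment; once the configuration is fixed, verifying the continued fraction and finishing the Picard-group computation are essentially routine linear algebra.
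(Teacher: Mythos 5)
Your high-level strategy is the one the paper uses: realize $L(p_{m,n},q_{m,n})\in\LB$ via a rational homology projective plane with a cyclic singularity of type $(p_{m,n},p_{m,n}-q_{m,n})$, obtained by a blow-up of a Hirzebruch surface, and compute $\pi_1(W_{m,n})\cong\pi_1(S^0_{m,n})$ by showing it is abelian and computing $H_1$ through the Miyanishi--Zhang sequence. However, your proposed starting configuration is wrong in a way that is not merely technical, and the key computation is never carried out, so this does not constitute a proof.

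Concretely, the paper starts from $\Sigma_n$, \emph{not} $\Sigma_{m+2}$, and the initial configuration is the zero section (self-intersection $-n$) together with two fibers --- not both sections. After two blow-ups on each side, the zero section becomes the \emph{central} vertex $-(n+2)$ of the chain; the two endpoints of weight $-(m+2)$ are exceptional curves produced near the end of the iterated blow-up sequence, not pre-existing sections. Your guess that the palindrome's endpoint weight $m+2$ should be matched by taking $\Sigma_{m+2}$ and putting its two sections at the ends of the chain runs into a structural obstruction: if $C_0$ and $C_\infty$ are both to remain at self-intersection $-(m+2)$ in $D$, then in particular $C_0$ (already at $-(m+2)$) cannot be blown up at all, but $C_0$ meets both fibers, so it has valence two in the dual graph and cannot be a chain endpoint. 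Worse, the full configuration $C_0\cup C_\infty\cup F_1\cup F_2$ has dual graph a $4$-cycle, and breaking it into a linear chain by omitting some $(-1)$-curves from $D$ changes both the combinatorics and the Picard-quotient computation in ways you have not controlled. Your open subset is then $\mathbb{C}^*\times\mathbb{C}^*$ with $\pi_1\cong\mathbb{Z}^2$ (versus the paper's $\mathbb{C}^*\times\mathbb{C}$ with $\pi_1\cong\mathbb{Z}$), so even abelianity does not immediately pin the answer down to a cyclic group; one must actually exhibit the relations. Finally, the sentence ``I expect all remaining generators to be killed by the chain relations'' is the crux of the proposition and is left unverified: the paper derives the $\mathbb{Z}_{m+2}$ from the explicit fiber-class relation $F\sim D_1+D_{m+3}+2D_{m+2}+\cdots+(m+2)D_2+(m+2)E$ (and its primed analogue), which forces $F=(m+2)E=(m+2)E'$ in $\mathrm{Pic}(\tilde S_{m,n})/\langle D\rangle$, yielding $H^2(S^0_{m,n};\mathbb{Z})\cong\mathbb{Z}\oplus\mathbb{Z}_{m+2}$. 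Until you specify the exact blow-up sequence and carry out this linear algebra, you have an approach but not a proof.
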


\begin{proof} 
We show that there exists a $\mathbb{Q}$-homology $\mathbb{CP}^2$ having a unique cyclic singularity of type $(p_{m,n},p_{m,n}-q_{m,n})$ (Section \ref{subsec:qhcp2}). Note that \[
p_{m,n}-q_{m,n}=m^3n+7m^2n+2m^2+16mn+8m+12n+7
\]
and that \[
\frac{p_{m,n}}{p_{m,n}-q_{m,n}}=\left[m+2, \left[2\right]^{m+2}, n+2, \left[2\right]^{m+2}, m+2\right].
\]

Consider a configuration formed by the union of the zero section and two fibers in the Hirzebruch surface $\Sigma_n$ of degree $n$, as shown in Figure \ref{fig:enter-label2} (a). By blowing up each of the two marked intersection points twice, we obtain the configuration of rational curves depicted in Figure \ref{fig:enter-label2} (b). And then, blow up each of the two marked intersection points $m$ times, followed by a single blow up at each of the final two $(-1)$-curves. This process yields the configuration of rational curves given as in Figure \ref{fig:enter-label2} (c). 

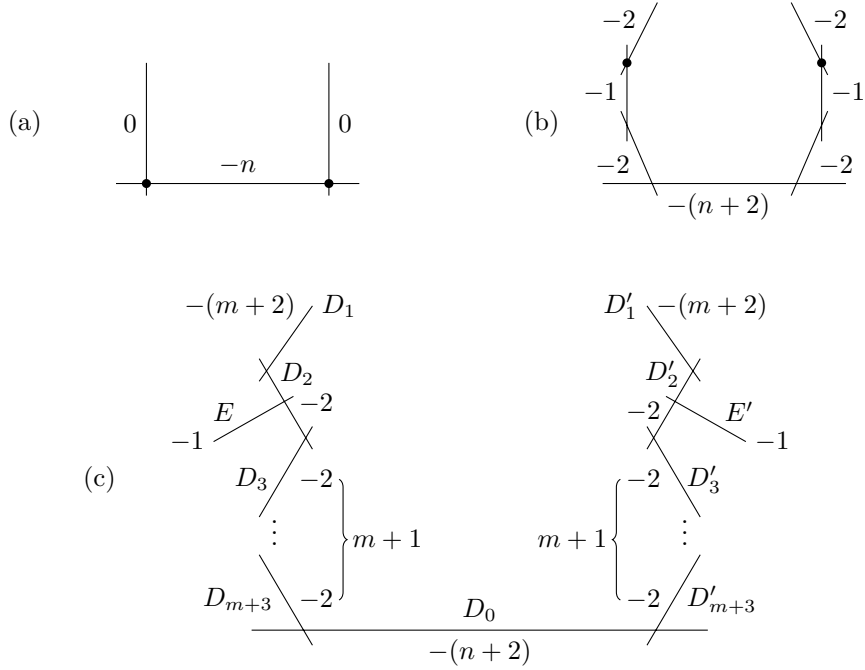
\begin{figure}[h]
\centering
\begin{tikzpicture}[scale=0.8]
    \node at (-3.5, 1) {(a)};
    \draw (-2,0)--(2,0) (1.5,-0.2)--(1.5,2)  (-1.5,-0.2)--(-1.5,2) ;
    \draw (0,0) node[above]{$-n$};
    \draw (-1.5,1) node[left]{$0$};
    \draw (1.5,1) node[right]{$0$};
    \draw (1.5,0) node[circle, fill, inner sep=1.2pt, black]{};
    \draw (-1.5,0) node[circle, fill, inner sep=1.2pt, black]{};
\begin{scope}[shift={(2,0)}]
    \node at (3, 1) {(b)};
    \draw (4,0)--(8,0) (7.1,-0.2)--(7.7,1.2) (4.3,1.2)--(4.9,-0.2) (7.7,1.8)--(7.1,3) (4.3,1.8)--(4.9,3) (4.4,0.7)--(4.4,2.3) (7.6,0.7)--(7.6,2.3);
    \draw (5.9,0) node[below]{$-(n+2)$};
    \draw (4.6,0.3) node[left]{$-2$};
    \draw (7.4,0.3) node[right]{$-2$};
    \draw (4.4,1.5) node[left]{$-1$};
    \draw (7.6,1.5) node[right]{$-1$};
    \draw (4.7,2.7) node[left]{$-2$};
    \draw (7.3,2.7) node[right]{$-2$};
    \draw (7.6,2) node[circle, fill, inner sep=1.2pt, black]{};
    \draw (4.4,2) node[circle, fill, inner sep=1.2pt, black]{};
\end{scope}
\end{tikzpicture}\qquad

\vspace{20pt}
\begin{tikzpicture}[scale=1]
    \node at (-5, 2) {(c)};
    \draw (-3,0)--(3,0) (2.2,-0.2)--(2.9,1) (2.9,1.5)--(2.2,2.7) (2.2,2.4)--(2.9,3.6) (2.9,3.3)--(2.2,4.3) (-2.2,-0.2)--(-2.9,1) (-2.9,1.5)--(-2.2,2.7) (-2.2,2.4)--(-2.9,3.6) (-2.9,3.3)--(-2.2,4.3) (2.45,3.1)--(3.5,2.5) (-2.45,3.1)--(-3.5,2.5);
    \draw (2.7,1.4) node{$\vdots$};
    \draw (-2.7,1.4) node{$\vdots$};
    \draw (0,0) node[below]{$-(n+2)$};
    \draw (2.5,0.4) node[left]{$-2$};
    \draw (2.5,2) node[left]{$-2$};
    \draw (2.5,2.9) node[left]{$-2$};
    \draw (3.5,2.5) node[right]{$-1$};
    \draw (2.2,4.3) node[right]{$-(m+2)$};
    \draw (-2.5,0.4) node[right]{$-2$};
    \draw (-2.5,2) node[right]{$-2$};
    \draw (-2.5,3) node[right]{$-2$};
    \draw (-3.5,2.5) node[left]{$-1$};
    \draw (-2.3,4.3) node[left]{$-(m+2)$};
    \draw [decorate,decoration={brace,amplitude=3pt},xshift=0pt]
    (-1.85,2) -- (-1.85,0.4) node [black,midway,xshift=18pt,yshift=0pt] 
    {$m+1$};
    \draw (-2.2,4.3) node[right]{$D_1$};
    \draw (-2.75,3.4) node[right]{$D_2$};
    \draw (-2.65,2) node[left]{$D_3$};
    \draw (-2.6,0.4) node[left]{$D_{m+3}$};
    \draw (-3.1,2.9) node[left]{$E$};
    \draw (0,0) node[above]{$D_0$};
    \draw (2.2,4.3) node[left]{$D_1'$};
    \draw (2.75,3.4) node[left]{$D_2'$};
    \draw (2.6,2) node[right]{$D_3'$};
    \draw (2.6,0.4) node[right]{$D_{m+3}'$};
    \draw (3.1,2.9) node[right]{$E'$};
    \draw [decorate,decoration={brace,amplitude=3pt},xshift=0pt]
    (1.85,0.4) -- (1.85,2) node [black,midway,xshift=-18pt,yshift=0pt] 
    {$m+1$};
\end{tikzpicture}
\caption{Configurations over $2m+6$ blow-ups from Hirzebruch surface $\Sigma_n$.}
\label{fig:enter-label2}
\end{figure}

Let $\tilde{S}_{m,n}$ denote the resulting surface, obtained by performing $2m+6$ blow-ups from $\Sigma_n$. Label the rational curves as in Figure \ref{fig:enter-label2} (c), and define \[
D:=(D_1+\cdots+D_{m+3})+D_0+(D_1'+\cdots+D_{m+3}').\] 
Let $\pi\colon \tilde{S}_{m,n}\to S_{m,n}$ be the contraction of $D$. Then, $S_{m,n}$ is a $\mathbb{Q}$-homology $\mathbb{CP}^2$ with a unique cyclic singularity of type $(p_{m,n},p_{m,n}-q_{m,n})$, and $\tilde{S}_{m,n}$ is its minimal resolution. 

Let $W_{m,n}$ denote the complement in $S_{m,n}$ of the cone neighborhood of the singularity. It remains to show that $\pi_1(W_{m,n})\cong \mathbb{Z}_{m+2}$, or equivalently, that $\pi_1(S_{m,n}^0)\cong \mathbb{Z}_{m+2}$, where $S_{m,n}^0=S_{m,n}\setminus\textup{Sing}(S_{m,n})=\tilde{S}_{m,n}\setminus D$ is the smooth locus of $S_{m,n}$. As in the proof of Proposition \ref{prop:construction2}, we have $\pi_1(S_{m,n}^0)\cong H_1(S_{m,n}^0;\mathbb{Z})$.

Let $\Phi\colon \tilde{S}_{m,n}\to \mathbb{CP}^1$ denote the vertical $\mathbb{CP}^1$-fibration given in the configuration. Let $F$ be a fiber of $\Phi$. Then, we have the linear equivalences \begin{align*}
    F & \sim D_1+D_{m+3}+2D_{m+2}+3D_{m+1}+\cdots+(m+2)D_2+(m+2)E \\ & \sim D_1'+D_{m+3}'+2D_{m+2}'+3D_{m+1}'+\cdots+(m+2)D_2'+(m+2)E'.
\end{align*}
Note that $\textup{Pic}(\tilde{S}_{m,n})$ is a free abelian group of rank $2m+8$ with a basis: \[
\{D_0, F, D_2,\dots,D_{m+3},E, D_2',\dots,D_{m+3}',E' \}.
\]
In the quotient group $\textup{Pic}(\tilde{S}_{m,n})/\langle D_0,D_1,\dots,D_{m+3},D_1',\dots,D_{m+3}'\rangle$, we have the relation $F=(m+2)E=(m+2)E'$. Thus, we obtain $H^2({S}_{m,n}^0;\mathbb{Z})\cong \mathbb{Z}\oplus  \mathbb{Z}_{m+2}$ and it follows that $H_1(S_{m,n}^0;\mathbb{Z})\cong \mathbb{Z}_{m+2}$ as desired.    
\end{proof}

Next, we analyze the lattice embeddings of $Q_{X(p_{m,n},q_{m,n})}$ into the standard diagonal lattice of one higher rank to show that  $L(p_{m,n},q_{m,n})\notin \mathcal{L}_{H_1}$ and, consequently, $L(p_{m,n},q_{m,n})\notin \LS$. This completes the proof of Theorem \ref{thm:filling1}.

\begin{proposition}\label{prop:orthogonal_application} For each $m\geq 6$ and $n\geq 0$ with $n\neq 2,4,6$, the lens space $L(p_{m,n},q_{m,n})$ does not bound a compact, oriented, smooth $4$-manifold $W$ with $H_1(W;\mathbb{Z})=0$ and $b_2(W)=b_2^+(W)=1$.    
\end{proposition}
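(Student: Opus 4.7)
By Corollary~\ref{cor:orthogonal_complement}, if the lens space $L(p_{m,n},q_{m,n})$ bounded a smooth $4$-manifold $W$ with $H_1(W;\mathbb{Z})=0$ and $b_2(W)=b_2^+(W)=1$, then there would exist a lattice embedding
\[
\iota\colon Q_{X(p_{m,n},q_{m,n})} \hookrightarrow -\mathbb{Z}^{N+1},
\]
where $N := b_2(X(p_{m,n},q_{m,n}))$ equals $2m+1$ when $n=0$ and $2m+n+1$ when $n\geq 1$, whose orthogonal complement is generated by a vector of square $-p_{m,n}$. The plan is to classify all such embeddings up to the action of $\mathrm{Aut}(-\mathbb{Z}^{N+1})$ and to verify that none of the resulting orthogonal complement generators has square $-p_{m,n}$, thereby contradicting the existence of $W$.

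To carry out the classification, I would follow the inductive chain argument already used in the proof of Proposition~\ref{prop:changemaker_application}. Each maximal $(-2)$-chain in the plumbing graph of $X(p_{m,n},q_{m,n})$, after a sequence of sign flips and basis permutations of $-\mathbb{Z}^{N+1}$, embeds as consecutive vectors $e_i-e_{i+1}$ occupying a disjoint block of basis elements. Performing this normalization on the two length-$m$ end chains and, when $n\geq 1$, on the length-$(n-1)$ middle chain exhausts the freedom of basis change, leaving only the images of the heavy vertices -- the one or two spheres of weight $-(m+5)$ (or the single sphere of weight $-(2m+8)$ when $n=0$) -- to be determined.

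For each heavy vertex $w$, write $\iota(w) = \sum_i a_i e_i$. The intersection relations with the neighboring $(-2)$-chains force the coefficients $a_i$ to be locally constant along each chain with a $\pm 1$ jump at the boundary vertex, so $\iota(w)$ is parametrized by a handful of integers together with the contribution from basis vectors outside the chains. The norm condition $\iota(w)^2 = -(m+5)$ (respectively $-(2m+8)$), combined with the hypothesis $m\geq 6$, then tightly bounds these parameters and produces a short explicit case list, in direct analogy with the split on $b\in\{-1,0,1\}$ in Proposition~\ref{prop:changemaker_application}. For each case, I would write the orthogonal complement generator in closed form and compute its square directly.

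The main obstacle, and the apparent source of the hypothesis $n\neq 2,4,6$, should arise in the ``degenerate'' branch where the heavy-vertex coefficients on one of the $(-2)$-chains vanish identically. This branch yields an auxiliary Diophantine condition relating $m$ and $n$ whose solutions govern exactly when the square of the orthogonal complement generator can attain the value $-p_{m,n}$; the excluded values $n=2,4,6$ are expected to be precisely the integers for which that Diophantine condition is satisfiable for some $m\geq 6$. Isolating these exceptions while ruling out every other parameter choice requires careful bookkeeping, and the $n=0$ case -- with a single heavy vertex bridging two symmetric length-$m$ chains -- should be handled analogously but with a simpler one-parameter analysis that exploits the reflection symmetry of the plumbing graph.
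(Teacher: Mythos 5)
Your plan follows the paper's argument exactly: invoke Corollary~\ref{cor:orthogonal_complement} to reduce to a lattice-embedding problem, normalize the maximal $(-2)$-chains to consecutive blocks of $e_i-e_{i+1}$'s, constrain the one or two heavy vertices by the intersection relations and their squared norm, and then check that the orthogonal-complement generator never has square $-p_{m,n}$. The split into the cases $n=0$ (one heavy vertex of weight $-(2m+8)$), $n=1$, and $n\geq 2$ that you anticipate is precisely what the paper does.

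One correction on your diagnosis of the hypothesis $n\neq 2,4,6$, since your guess here is off. The excluded values are \emph{not} those for which the degenerate Diophantine condition $nc^2-2c=m+3$ becomes solvable; that equation has solutions with $m\geq 6$ for many $n$ (e.g.\ $n=3$, $c=-2$, $m=13$). Rather, for $n=2,4,6$ the heavy-vertex norm equation
\[
m+5 = ma^2+(a+1)^2+(m+1)b^2+(c-1)^2+(n-1)c^2
\]
admits extra solutions in the \emph{non-degenerate} branch $a^2+b^2=1$ with $c\neq 0$ -- for instance $(a,b,c)=(0,\pm 1,1)$ when $n=4$ and $(a,b,c)=(-1,0,1)$ when $n=6$ -- and these would give additional candidate embeddings that the paper does not analyze, hence the exclusion. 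The genuinely degenerate branch $a=b=0$ is then ruled out for all $n\geq 3$ not by unsolvability of the Diophantine condition, but by the orthogonality constraint: if both heavy vertices fall into this branch one is forced to $c'=-c$, and then $\iota(z_1)\cdot\iota(z_2)=nc^2-2c=m+3\neq 0$. When you carry out your proposed computation you will see exactly this structure, so the plan is sound; only your prediction of the mechanism needed adjusting.
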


\begin{proof} 
We consider the following three cases and show that, for each case, the lattice embedding $\iota$ of $Q_{X(p_{m,n},q_{m,n})}$ into $-\mathbb{Z}^{2m+n+2}$ is essentially unique. However, the square of a vector generating the orthogonal complement of the image of $\iota$ does not match $-p_{m,n}$. Consequently, these lens spaces cannot be the boundary of a smooth 4-manifold with $H_1(W;\mathbb{Z})=0$ and $b_2(W)=b_2^+(W)=1$, by Corollary \ref{cor:orthogonal_complement}.

\textbf{Case 1:} We first consider the case $n=0$. Note that $p_{m,0}=2(m+1)(m+2)^2$. Suppose $\iota\colon Q_{X(p_{m,0},q_{m,0})}\hookrightarrow -\mathbb{Z}^{2m+2}$ is a lattice embedding. Let $u_1,\dots,u_m,w,v_1,\dots,v_m$ denote the homology classes in $H_2(X(p_{m,0},q_{m,0});\mathbb{Z})$ corresponding to the spheres in the plumbing graph, as shown in Figure \ref{fig:plumbing_graph_of_X(p_m,n,q_m,n)} (a). As in the proof of Proposition \ref{prop:changemaker_application}, we may assume that $\iota(u_\ell)=e_\ell-e_{\ell+1}$ for each $\ell=1,\dots,m$.

Next consider $\iota(v_1)$. Write $\iota(v_1)=\pm e_i\pm e_j$ with $i<j$. If $i\leq m+1$, then $\iota(v_1)\cdot \iota(u_k)\neq 0$ for some $k\in \{i-1,i,i+1\}$, which leads to a contradiction. Thus, we must have $i\geq m+2$. After a change of basis, we may assume that $\iota(v_1)=e_{m+2}-e_{m+3}$. Following the same reasoning as in the preceding paragraph, we can assume that $\iota(v_\ell)=e_{m+1+\ell}-e_{m+2+\ell}$ for each $\ell=1,\dots,m$.

Finally, consider $\iota(w)$. Write $\iota(w)=\sum_{i=1}^{2m+2} a_ie_i$. From the conditions $w\cdot u_\ell=0$ for $\ell<m$, $w\cdot v_\ell=0$ for $\ell>1$, and $w\cdot u_m=1=w\cdot v_1$, we derive the following relations:\[
a:=a_1=\cdots=a_m, \quad a_{m+1}=a+1,\quad b:=a_{m+3}=\cdots=a_{2m+2},\quad a_{m+2}=b-1.\] 
Additionally, we have: \[
2m+8=-\iota(w)^2=ma^2+(a+1)^2+(b-1)^2+mb^2=(a^2+b^2)m+(a+1)^2+(b-1)^2.\] 
This equation is impossible if $a^2+b^2\geq 4$, so we must have $|a|, |b|\leq 1$. It is straightforward to verify that the only solution is $(a,b)=(1,-1)$.

In conclusion, we have shown that there is a unique (up to an automorphism of $-\mathbb{Z}^{2m+2}$) embedding $Q_{X(p_{m,0},q_{m,0})}\hookrightarrow -\mathbb{Z}^{2m+2}$, as illustrated in Figure \ref{fig:embedding_X(p_m,n,q_m,n)} (a). The orthogonal complement is generated by the vector $e_1+\cdots+e_{2m+2}$, which has square $-(2m+2)\neq -p_{m,0}$.

\textbf{Case 2:} Next, we consider the case $n=1$. Note that $p_{m,1}=(m+2)^2(m^2+6m+6)$. Suppose $\iota\colon Q_{X(p_{m,1},q_{m,1})}\hookrightarrow -\mathbb{Z}^{2m+3}$ is a lattice embedding. Let $u_1,\dots,u_m,w_1,w_2,v_1,\dots,v_m\in H_2(X(p_{m,1},q_{m,1});\mathbb{Z})$ denote the homology classes of the spheres of the plumbing graph, as shown in Figure \ref{fig:plumbing_graph_of_X(p_m,n,q_m,n)} (b). As in Case 1, we may assume that $\iota(u_\ell)=e_\ell-e_{\ell+1}$ and $\iota(v_\ell)=e_{m+1+\ell}-e_{m+2+\ell}$ for each $\ell=1,\dots,m$. Next, write $\iota(w_1)=\sum_{i=1}^{2m+3} a_ie_i$.  From the given conditions, we derive: 
\[a:= a_1=\cdots=a_m,\quad a_{m+1}=a+1,\quad b:=a_{m+2}=\cdots=a_{2m+2}\]
Then, the following equation holds:\[
    m+5=ma^2+(a+1)^2+(m+1)b^2+c^2=(a^2+b^2)m+(a+1)^2+b^2+c^2\]
where $c:=a_{2m+3}$. Since $(a^2+b^2)m\geq 2m>m+5$ if $a^2+b^2\geq 2$ and $m\geq 6$, we must have $a^2+b^2\leq 1$. A case-by-case analysis shows that the only solutions are $(a,b,c)=(1,0,\pm 1)$ or $(a,b,c)=(0,0, \pm \sqrt{m+4})$ assuming $m+4$ is a perfect square. Thus, \[
\iota(w_1)=(e_1+\cdots+e_m)+2e_{m+1}\pm e_{2m+3} \text{ or } \iota(w_1)=e_{m+1}\pm \sqrt{m+4}e_{2m+3}.\] 
By a similar argument, we have \[
\iota(w_2)=-2e_{m+2}-(e_{m+3}+\cdots+e_{2m+2})\pm e_{2m+3}\text{ or } \iota(w_2)=-e_{m+3}\pm \sqrt{m+4}e_{2m+3}.\] 
Since $\iota(w_1)\cdot\iota(w_2)=1$, the only valid case is \[
    (\iota(w_1),\iota(w_2))=\left((e_1+\cdots+e_m)+2e_{m+1}\pm e_{2m+3}, -2e_{m+2}-(e_{m+3}+\cdots+e_{2m+2})\mp e_{2m+3} \right)
\]
Thus, for $m\geq 6$, there is a unique (up to an automorphism of $-\mathbb{Z}^{2m+3}$) embedding $Q_{X(p_{m,1},q_{m,1})}\hookrightarrow -\mathbb{Z}^{2m+3}$, as illustrated in Figure \ref{fig:embedding_X(p_m,n,q_m,n)} (b). The orthogonal complement is generated by the vector $e_1+\cdots+e_{2m+2}-(m+2)e_{2m+3}$, which has square $-(m^2+6m+6)\neq -p_{m,1}$.

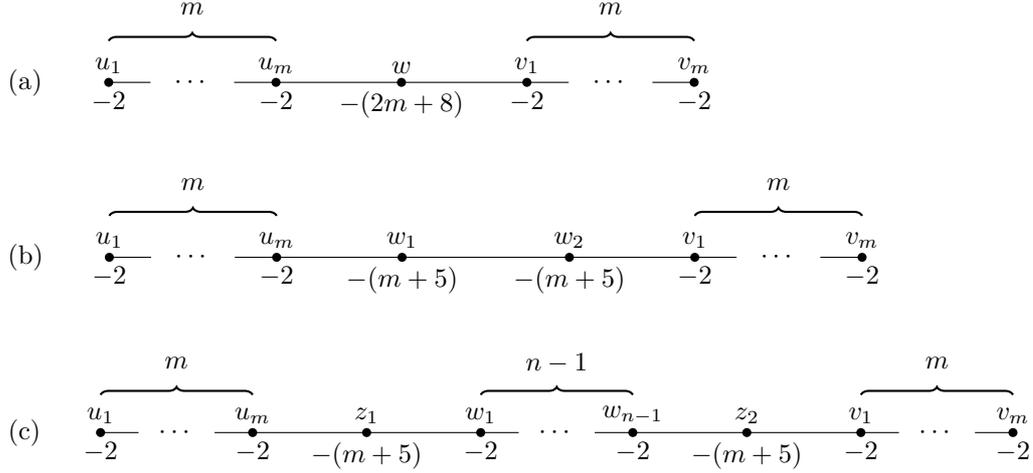
\begin{figure}[h]
\flushleft 
\begin{tikzpicture}[scale=1.1]
\node at (-2, 0) {(a)};
\draw (-1,0) node[circle, fill, inner sep=1.2pt, black]{};
\draw (1,0) node[circle, fill, inner sep=1.2pt, black]{};
\draw (2.5,0) node[circle, fill, inner sep=1.2pt, black]{};
\draw (4,0) node[circle, fill, inner sep=1.2pt, black]{};
\draw (6,0) node[circle, fill, inner sep=1.2pt, black]{};

\draw (-1,0) node[below]{$-2$};
\draw (1,0) node[below]{$-2$};
\draw (2.5,0) node[below]{$-(2m+8)$};
\draw (4,0) node[below]{$-2$};
\draw (6,0) node[below]{$-2$};

\draw (-1,0) node[above]{$u_1$};
\draw (1,0) node[above]{$u_m$};
\draw (2.5,0) node[above]{$w$};
\draw (4,0) node[above]{$v_1$};
\draw (6,0) node[above]{$v_m$};

\draw (0,0) node{$\cdots$};
\draw (5,0) node{$\cdots$};

\draw (-1,0)--(-0.5,0) (0.5,0)--(1,0)  (1,0)--(4.5,0) (5.5,0)--(6,0) ;
    \draw [thick,decorate,decoration={brace,amplitude=3pt},xshift=0pt]
	(-1,0.5) -- (1,0.5) node [black,midway,xshift=0pt,yshift=12pt] 
	{$m$};
 \draw [thick,decorate,decoration={brace,amplitude=3pt},xshift=0pt]
	(4,0.5) -- (6,0.5) node [black,midway,xshift=0pt,yshift=12pt] 
	{$m$};
\end{tikzpicture}
\vspace{15pt}

\begin{tikzpicture}[scale=1.1]
\node at (-2, 0) {(b)};
\draw (-1,0) node[circle, fill, inner sep=1.2pt, black]{};
\draw (1,0) node[circle, fill, inner sep=1.2pt, black]{};
\draw (2.5,0) node[circle, fill, inner sep=1.2pt, black]{};
\draw (4.5,0) node[circle, fill, inner sep=1.2pt, black]{};
\draw (8,0) node[circle, fill, inner sep=1.2pt, black]{};
\draw (6,0) node[circle, fill, inner sep=1.2pt, black]{};

\draw (-1,0) node[below]{$-2$};
\draw (1,0) node[below]{$-2$};
\draw (2.5,0) node[below]{$-(m+5)$};
\draw (4.5,0) node[below]{$-(m+5)$};
\draw (6,0) node[below]{$-2$};
\draw (8,0) node[below]{$-2$};

\draw (-1,0) node[above]{$u_1$};
\draw (1,0) node[above]{$u_m$};
\draw (4.5,0) node[above]{$w_2$};
\draw (2.5,0) node[above]{$w_1$};
\draw (6,0) node[above]{$v_1$};
\draw (8,0) node[above]{$v_m$};

\draw (0,0) node{$\cdots$};
\draw (7,0) node{$\cdots$};

\draw (-1,0)--(-0.5,0) (0.5,0)--(1,0)  (1,0)--(6.5,0) (7.5,0)--(8,0) ;
    \draw [thick,decorate,decoration={brace,amplitude=3pt},xshift=0pt]
	(-1,0.5) -- (1,0.5) node [black,midway,xshift=0pt,yshift=12pt] 
	{$m$};
 \draw [thick,decorate,decoration={brace,amplitude=3pt},xshift=0pt]
	(6,0.5) -- (8,0.5) node [black,midway,xshift=0pt,yshift=12pt] 
	{$m$};
\end{tikzpicture}
\vspace{15pt}

\begin{tikzpicture}[scale=1]
\node at (-2, 0) {(c)};
\draw (-1,0) node[circle, fill, inner sep=1.2pt, black]{};
\draw (1,0) node[circle, fill, inner sep=1.2pt, black]{};
\draw (2.5,0) node[circle, fill, inner sep=1.2pt, black]{};
\draw (4,0) node[circle, fill, inner sep=1.2pt, black]{};
\draw (6,0) node[circle, fill, inner sep=1.2pt, black]{};
\draw (7.5,0) node[circle, fill, inner sep=1.2pt, black]{};
\draw (9,0) node[circle, fill, inner sep=1.2pt, black]{};
\draw (11,0) node[circle, fill, inner sep=1.2pt, black]{};

\draw (-1,0) node[below]{$-2$};
\draw (1,0) node[below]{$-2$};
\draw (2.5,0) node[below]{$-(m+5)$};
\draw (4,0) node[below]{$-2$};
\draw (6,0) node[below]{$-2$};
\draw (7.5,0) node[below]{$-(m+5)$};
\draw (9,0) node[below]{$-2$};
\draw (11,0) node[below]{$-2$};

\draw (-1,0) node[above]{$u_1$};
\draw (1,0) node[above]{$u_m$};
\draw (2.5,0) node[above]{$z_1$};
\draw (4,0) node[above]{$w_1$};
\draw (6,0) node[above]{$w_{n-1}$};
\draw (7.5,0) node[above]{$z_2$};
\draw (9,0) node[above]{$v_1$};
\draw (11,0) node[above]{$v_m$};

\draw (0,0) node{$\cdots$};
\draw (5,0) node{$\cdots$};
\draw (10,0) node{$\cdots$};

\draw (-1,0)--(-0.5,0) (0.5,0)--(1,0)  (1,0)--(4.5,0) (5.5,0)--(9.5,0) (10.5,0)--(11,0) ;
\draw [thick,decorate,decoration={brace,amplitude=3pt},xshift=0pt]
	(-1,0.5) -- (1,0.5) node [black,midway,xshift=0pt,yshift=12pt] 
	{$m$};
 \draw [thick,decorate,decoration={brace,amplitude=3pt},xshift=0pt]
	(4,0.5) -- (6,0.5) node [black,midway,xshift=0pt,yshift=12pt] 
	{$n-1$};
 \draw [thick,decorate,decoration={brace,amplitude=3pt},xshift=0pt]
	(9,0.5) -- (11,0.5) node [black,midway,xshift=0pt,yshift=12pt] 
	{$m$};
\end{tikzpicture}
\caption{The plumbing graph of $X(p_{m,n},q_{m,n})$ for (a) $n=0$, (b) $n=1$, and (c) $n\geq 2$ }
\label{fig:plumbing_graph_of_X(p_m,n,q_m,n)}
\end{figure}

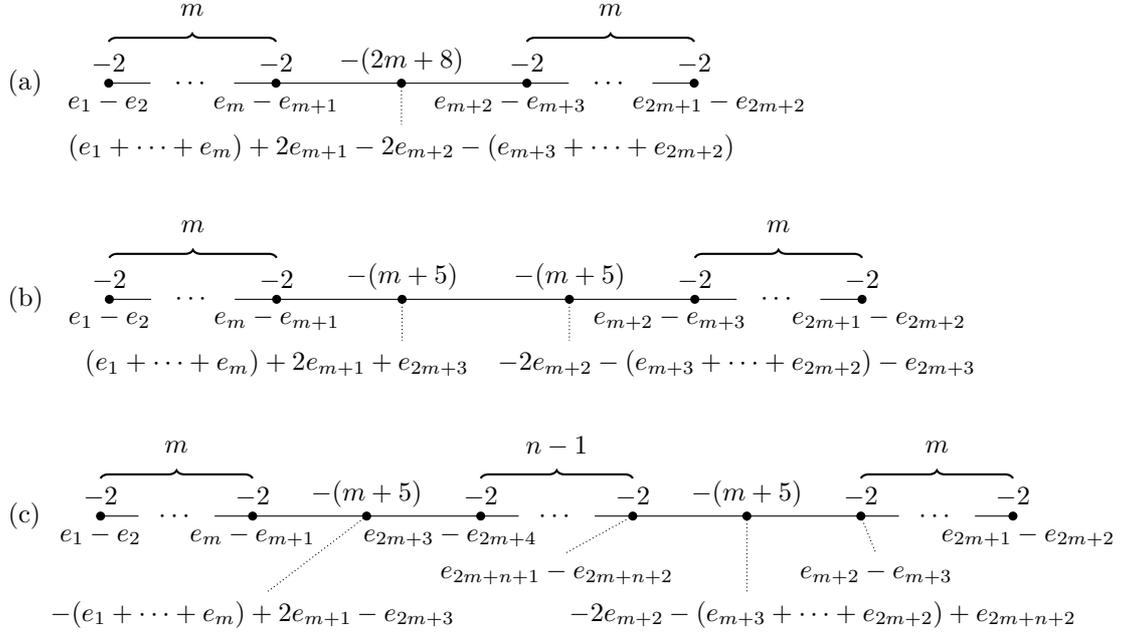
\begin{figure}[h]
\flushleft 
\begin{tikzpicture}[scale=1.1]
    \node at (-2, 0) {(a)};
    \draw (-1,0) node[circle, fill, inner sep=1.2pt, black]{};
    \draw (1,0) node[circle, fill, inner sep=1.2pt, black]{};
    \draw (2.5,0) node[circle, fill, inner sep=1.2pt, black]{};
    \draw (4,0) node[circle, fill, inner sep=1.2pt, black]{};
    \draw (6,0) node[circle, fill, inner sep=1.2pt, black]{};
    
    \draw (-1,0) node[above]{$-2$};
    \draw (1,0) node[above]{$-2$};
    \draw (2.5,0) node[above]{$-(2m+8)$};
    \draw (4,0) node[above]{$-2$};
    \draw (6,0) node[above]{$-2$};
    
    \draw (-1,0) node[below]{$e_1-e_2$};
    \draw (1,0) node[below]{$e_m-e_{m+1}$};
    \draw (2.5,-0.5) node[below]{$(e_1+\cdots+e_m)+2e_{m+1}-2e_{m+2}-(e_{m+3}+\cdots+e_{2m+2})$};
    \draw (3.8,0) node[below]{$e_{m+2}-e_{m+3}$};
    \draw (6.3,0) node[below]{$e_{2m+1}-e_{2m+2}$};
    
    \draw (0,0) node{$\cdots$};
    \draw (5,0) node{$\cdots$};
    \draw[densely dotted] (2.5,0)--(2.5,-0.5);
    \draw (-1,0)--(-0.5,0) (0.5,0)--(1,0)  (1,0)--(4.5,0) (5.5,0)--(6,0) ;
    \draw [thick,decorate,decoration={brace,amplitude=3pt},xshift=0pt]
	(-1,0.5) -- (1,0.5) node [black,midway,xshift=0pt,yshift=12pt] 
	{$m$};
 \draw [thick,decorate,decoration={brace,amplitude=3pt},xshift=0pt]
	(4,0.5) -- (6,0.5) node [black,midway,xshift=0pt,yshift=12pt] 
	{$m$};
\end{tikzpicture}
\vspace{15pt}

\begin{tikzpicture}[scale=1.1]
\node at (-2, 0) {(b)};
\draw (-1,0) node[circle, fill, inner sep=1.2pt, black]{};
\draw (1,0) node[circle, fill, inner sep=1.2pt, black]{};
\draw (2.5,0) node[circle, fill, inner sep=1.2pt, black]{};
\draw (4.5,0) node[circle, fill, inner sep=1.2pt, black]{};
\draw (8,0) node[circle, fill, inner sep=1.2pt, black]{};
\draw (6,0) node[circle, fill, inner sep=1.2pt, black]{};

\draw (-1,0) node[above]{$-2$};
\draw (1,0) node[above]{$-2$};
\draw (2.5,0) node[above]{$-(m+5)$};
\draw (4.5,0) node[above]{$-(m+5)$};
\draw (6,0) node[above]{$-2$};
\draw (8,0) node[above]{$-2$};

\draw (-1,0) node[below]{$e_1-e_2$};
\draw (1,0) node[below]{$e_m-e_{m+1}$};
\draw (1,-0.5) node[below]{$(e_1+\cdots+e_m)+2e_{m+1}+ e_{2m+3}$};
\draw (6.5,-0.5) node[below]{$-2e_{m+2}-(e_{m+3}+\cdots+e_{2m+2})- e_{2m+3}$};
\draw (5.7,0) node[below]{$e_{m+2}-e_{m+3}$};
\draw (8.2,0) node[below]{$e_{2m+1}-e_{2m+2}$};

\draw (0,0) node{$\cdots$};
\draw (7,0) node{$\cdots$};
\draw[densely dotted] (2.5,0)--(2.5,-0.5) (4.5,0)--(4.5,-0.5);
\draw (-1,0)--(-0.5,0) (0.5,0)--(1,0)  (1,0)--(6.5,0) (7.5,0)--(8,0) ;
\draw [thick,decorate,decoration={brace,amplitude=3pt},xshift=0pt]
	(-1,0.5) -- (1,0.5) node [black,midway,xshift=0pt,yshift=12pt] 
	{$m$};
 \draw [thick,decorate,decoration={brace,amplitude=3pt},xshift=0pt]
	(6,0.5) -- (8,0.5) node [black,midway,xshift=0pt,yshift=12pt] 
	{$m$};
\end{tikzpicture}
\vspace{15pt}

\begin{tikzpicture}[scale=1]
\node at (-2, 0) {(c)};
\draw (-1,0) node[circle, fill, inner sep=1.2pt, black]{};
\draw (1,0) node[circle, fill, inner sep=1.2pt, black]{};
\draw (2.5,0) node[circle, fill, inner sep=1.2pt, black]{};
\draw (4,0) node[circle, fill, inner sep=1.2pt, black]{};
\draw (6,0) node[circle, fill, inner sep=1.2pt, black]{};
\draw (7.5,0) node[circle, fill, inner sep=1.2pt, black]{};
\draw (9,0) node[circle, fill, inner sep=1.2pt, black]{};
\draw (11,0) node[circle, fill, inner sep=1.2pt, black]{};

\draw (-1,0) node[below]{$e_1-e_2$};
\draw (1,0) node[below]{$e_m-e_{m+1}$};
\draw (1,-1) node[below]{$-(e_1+\cdots+e_m)+2e_{m+1}-e_{2m+3}$};
\draw (3.6,0) node[below]{$e_{2m+3}-e_{2m+4}$};
\draw (5,-0.5) node[below]{$e_{2m+n+1}-e_{2m+n+2}$};
\draw (8.5,-1) node[below]{$-2e_{m+2}-(e_{m+3}+\cdots+e_{2m+2})+e_{2m+n+2}$};
\draw (9.2,-0.5) node[below]{$e_{m+2}-e_{m+3}$};
\draw (11.2,0) node[below]{$e_{2m+1}-e_{2m+2}$};

\draw (-1,0) node[above]{$-2$};
\draw (1,0) node[above]{$-2$};
\draw (2.5,0) node[above]{$-(m+5)$};
\draw (4,0) node[above]{$-2$};
\draw (6,0) node[above]{$-2$};
\draw (7.5,0) node[above]{$-(m+5)$};
\draw (9,0) node[above]{$-2$};
\draw (11,0) node[above]{$-2$};

\draw (0,0) node{$\cdots$};
\draw (5,0) node{$\cdots$};
\draw (10,0) node{$\cdots$};

\draw [thick,decorate,decoration={brace,amplitude=3pt},xshift=0pt]
	(-1,0.5) -- (1,0.5) node [black,midway,xshift=0pt,yshift=12pt] 
	{$m$};
 \draw [thick,decorate,decoration={brace,amplitude=3pt},xshift=0pt]
	(4,0.5) -- (6,0.5) node [black,midway,xshift=0pt,yshift=12pt] 
	{$n-1$};
 \draw [thick,decorate,decoration={brace,amplitude=3pt},xshift=0pt]
	(9,0.5) -- (11,0.5) node [black,midway,xshift=0pt,yshift=12pt] 
	{$m$};

\draw[densely dotted] (2.5,0)--(1.2,-1) (6,0)--(5.1,-0.5) (7.5,0)--(7.5,-1) (9,0)--(9.2,-0.5);

\draw (-1,0)--(-0.5,0) (0.5,0)--(1,0)  (1,0)--(4.5,0) (5.5,0)--(9.5,0) (10.5,0)--(11,0) ;
\end{tikzpicture}
\caption{An embedding of $Q_{X(p_{m,n},q_{m,n})}$ into $-\mathbb{Z}^{2m+n+2}$ for (a) $n=0$, (b) $n=1$, and (c) $n\geq 2$ }
\label{fig:embedding_X(p_m,n,q_m,n)} 
\end{figure}

\textbf{Case 3:} Finally, we consider the case $n\geq2$ with $n\neq 2,4,6$. Suppose $\iota\colon Q_{X(p_{m,n},q_{m,n})}\hookrightarrow -\mathbb{Z}^{2m+n+2}$ is a lattice embedding. Let $u_1,\dots,u_m,z_1,w_1,\dots,w_{n-1},z_2,v_1,\dots,v_m$ denote the homology classes in $H_2(X(p_{m,n},q_{m,n});\mathbb{Z})$ corresponding to the spheres in the plumbing graph, as shown in Figure \ref{fig:plumbing_graph_of_X(p_m,n,q_m,n)} (c). As in the previous cases, we may assume that $\iota(u_\ell)=e_\ell-e_{\ell+1}$ $(\ell=1,\dots,m)$, $\iota(v_\ell)=e_{m+1+\ell}-e_{m+2+\ell}$ $(\ell=1,\dots,m)$, and $\iota(w_\ell)=e_{2m+2+\ell}-e_{2m+3+\ell}$ $(\ell=1,\dots,n-1)$.

Next, consider $\iota(z_1)$. Write $\iota(z_1)=\sum_{i=1}^{2m+n+2} a_ie_i$. Then we have the following relations: 
\[a:= a_1=\cdots=a_m,\quad a_{m+1}=a+1,\quad b:=a_{m+2}=\cdots=a_{2m+2},\]
\[ \quad c:=a_{2m+4}=\cdots=a_{2m+n+2},\text{ and }a_{2m+3}=c-1, \]
Then we have
\[ m+5=ma^2+(a+1)^2+(m+1)b^2+(c-1)^2+(n-1)c^2.\]

Since $m\geq 6$, we must have $a^2+b^2\leq 1$. Now, assuming that $n\neq 2,4,6$, by a case-by-case analysis shows that the only possibilities are $
(a,b,c)=(1,0,0)$ or $(a,b,c)=(0,0,c)$, where $c$ satisfies the equation $nc^2-2c=m+3$. Thus, $\iota(z_1)=(e_1+\cdots+e_m)+2e_{m+1}-e_{2m+3}$ or $\iota(z_1)=e_{m+1}+(c-1)e_{2m+3}+c(e_{2m+4}+\cdots+e_{2m+n+2})$ with $nc^2-2c=m+3$. Similarly, we have $\iota(z_2)=-2e_{m+2}-(e_{m+3}+\cdots+e_{2m+2})+e_{2m+n+2}$ or $\iota(z_2)=-e_{m+2}+c'(e_{2m+3}+\cdots+e_{2m+n+1})+(c'+1)e_{2m+n+2}$ with $nc'^2+2c'=m+3.$ It is easy to verify that for fixed $m$ and $n$ with $n\neq 2$, there are no integers $c$ and $c'$ satisfying $nc^2-2c=m+3=nc'^2+2c'$. Noting that $\iota(z_1)\cdot \iota(z_2)=0$, it follows that there is a unique (up to an automorphism of $-\mathbb{Z}^{2m+n+2}$) embedding   $Q_{X(p_{m,n},q_{m,n})}\hookrightarrow -\mathbb{Z}^{2m+n+2}$ as shown in Figure \ref{fig:embedding_X(p_m,n,q_m,n)} (c). The orthogonal complement is generated by the vector $(e_1+\cdots+e_{2m+2})+(m+2)(e_{2m+3}+\cdots+e_{2m+n+2})$, which has square $-((m+2)^2n+2m+2) \neq -p_{m,n}$.
\end{proof}

Another infinite family contained in $\LB \setminus \LS$ can also be found using a different construction and a distinct obstruction.

\begin{proposition}\label{prop:construction3}
For each $n>0$, the lens space $L(4n^2,2n-1)$ bounds a compact, oriented, smooth $4$-manifold with $b_1=0$ and $b_2=b_2^+=1$, but does not bound such a manifold with $\pi_1=1$.
\end{proposition}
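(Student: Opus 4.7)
The plan is to follow the two-part strategy used in Theorems \ref{thm:filling2} and \ref{thm:filling1}: explicitly realize $L(4n^2,2n+1)$ as the boundary of a smooth 4-manifold arising from a rational homology projective plane, and then obstruct the existence of a simply-connected filling via the Donaldson lattice embedding of Corollary \ref{cor:orthogonal_complement}, supplemented in exceptional cases by a Heegaard Floer $d$-invariant argument in the spirit of the $L(10,1)$ discussion in the introduction.

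For the construction, the target is a $\mathbb{Q}$-homology $\mathbb{CP}^2$ with a unique cyclic singularity of type $(4n^2,\,4n^2-2n-1)$. A Hirzebruch-Jung computation shows
\[
\frac{4n^2}{4n^2-2n-1} \;=\; \bigl[\underbrace{2,\dots,2}_{2n-2},\,2n+2\bigr],
\]
so the resolution graph is a length-$(2n-1)$ linear chain of $(-2)$-curves capped by a single $-(2n+2)$-curve. I would start from the Hirzebruch surface $\Sigma_{2n+2}$ with its negative section and a fiber, perform $2n-2$ iterated blow-ups at an intersection point to stretch the configuration into a chain of the required shape together with an auxiliary $(-1)$-curve, and then contract the chain to obtain a surface $S_n$. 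Verifying the Betti numbers identifies $S_n$ as a $\mathbb{Q}$-homology $\mathbb{CP}^2$, and the fundamental group of the smooth locus $S_n^0$ is read off, as in the proofs of Propositions \ref{prop:construction2} and \ref{prop:construction1}, from the Miyanishi-Zhang short exact sequence $0\to H_2(D;\mathbb{Z})\to \textup{Pic}(\tilde{S}_n)\to H^2(S_n^0;\mathbb{Z})\to 0$ combined with the fact that $\pi_1(S_n^0)$ is abelian (since $S_n^0$ contains an open subset isomorphic to $\mathbb{C}^*\times\mathbb{C}$). The resulting $W_n := S_n\setminus (\text{cone neighborhood of the singularity})$ then has $b_1(W_n)=0$, $b_2(W_n)=b_2^+(W_n)=1$, $\partial W_n = L(4n^2,2n+1)$, and $\pi_1(W_n)$ a nontrivial finite cyclic group.

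For the obstruction, any hypothetical simply-connected filling has $H_1=0$, so Corollary \ref{cor:orthogonal_complement} yields a lattice embedding $\iota\colon Q_{X(4n^2,2n+1)}\hookrightarrow -\mathbb{Z}^{2n+2}$ whose orthogonal complement is generated by a vector of square $-4n^2$. The plumbing $X(4n^2,2n+1)$ is the chain of weights $[2n,\underbrace{2,\dots,2}_{2n}]$, so normalizing as in the proof of Proposition \ref{prop:changemaker_application} lets us assume $\iota(u_i)=e_i-e_{i+1}$ for the $2n$ classes of square $-2$. The defining relations for the $-2n$-class $w$ then force $\iota(w)=w_1 e_1 + (w_1+1)(e_2+\cdots+e_{2n+1}) + a\,e_{2n+2}$ with $w_1^2+2n(w_1+1)^2+a^2=2n$; the only integer solutions are $(w_1,a)=(0,0)$, giving orthogonal complement of square $-1$, and, whenever $2n-1$ is a perfect square, $(w_1,a)=(-1,\pm\sqrt{2n-1})$, giving orthogonal complement of square $-4n^2$. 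For $n$ with $2n-1$ not a perfect square, only the first case occurs and $-1\neq -4n^2$ contradicts the corollary, proving $L(4n^2,2n+1)\notin\LS$.

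The main obstacle is the sporadic family $n\in\{1,5,13,25,\dots\}$ for which $2n-1$ is a perfect square, since there the Donaldson obstruction alone admits a valid lattice embedding. For such $n$, I expect to replace the lattice argument by an Ozsv\'ath-Szab\'o $d$-invariant computation: a simply-connected filling has intersection form $(4n^2)$, which is even, so the filling is spin, and comparing the two spin-structure $d$-invariants of $L(4n^2,2n+1)$ with the constraint imposed by a spin filling of signature $+1$ should rule out the simply-connected case, analogously to the treatment of $L(10,1)$ recalled in the introduction.
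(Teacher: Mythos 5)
Your obstruction strategy---normalize a lattice embedding $\iota\colon Q_{X(4n^2,2n+1)}\hookrightarrow -\mathbb{Z}^{2n+2}$ as in Proposition~\ref{prop:changemaker_application} and compare the square of the orthogonal-complement generator against $-4n^2$ via Corollary~\ref{cor:orthogonal_complement}---is a genuinely different route from the paper's, and the computation you sketch is correct as far as it goes, but it leaves a genuine gap. As you observe, when $2n-1$ is a perfect square (the infinite set $n=1,5,13,25,\dots$) the embedding with $\iota(w)=-e_1\pm\sqrt{2n-1}\,e_{2n+2}$ makes the orthogonal complement have square exactly $-4n^2$, so Donaldson's theorem is silent; you propose a $d$-invariant fallback for these $n$ but do not carry it out. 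The paper avoids the case split entirely by a single observation: the intersection form $(4n^2)$ is \emph{even}, so any hypothetical simply-connected filling with $b_2=b_2^+=1$ and $H_1=0$ is spin, and the $d$-invariant constraint for spin fillings then rules out all $n\geq 2$ uniformly, rendering the lattice analysis unnecessary. Moreover, your lattice computation surfaces a real problem with the smallest exceptional case: for $n=1$ the exceptional embedding is realized geometrically---$L(4,3)=-L(4,1)$ is the boundary of the $D^2$-bundle over $S^2$ with Euler number $+4$, a simply-connected positive-definite smooth $4$-manifold with $b_2=1$---so the obstruction fails outright at $n=1$, no $d$-invariant argument can rescue it, and the paper's quadratic-residue dismissal of $n=1$ appears to evaluate the wrong residue ($-q=-3\equiv 1\bmod 4$ \emph{is} a square).

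For the existence half, you also take a considerably heavier path than the paper. The paper cites Lisca's classification to produce a rational homology $4$-ball $V_n$ bounded by $L(4n^2,2n+1)$, so that $V_n\#\mathbb{CP}^2$ is immediately the required filling with $b_1=0$, $b_2=b_2^+=1$; no control on $\pi_1$ is needed for this statement. Your plan to build a $\mathbb{Q}$-homology $\mathbb{CP}^2$ with a cyclic singularity of type $(4n^2,4n^2-2n-1)$ is in the spirit of Propositions~\ref{prop:construction2} and \ref{prop:construction1}, but it is not carried out and the specific blow-up scheme is not obviously correct: blowing up at the intersection of the negative section of $\Sigma_{2n+2}$ and a fiber drops the section's self-intersection to $-(2n+3)$, so the weight $-(2n+2)$ needed for the resolution chain $[\underbrace{2,\dots,2}_{2n-2},\,2n+2]$ would not survive. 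Even if a correct configuration were found and the Miyanishi--Zhang computation carried through, this is substantially more work than the one-line connected-sum construction, and it is not clear that such a rational homology projective plane exists for every $n$ in the first place.
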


\begin{proof}  
For each $n$, $L(4n^2,2n-1)$ bounds a rational homology 4-ball $V_n$ \cite{Lisca-2007}. Thus, the connected sum $V_n\# \mathbb{CP}^2$ is a smooth 4-manifold with $b_1=0$, $b_2=b_2^+=1$, and boundary $L(4n^2,2n-1)$. 

However, $L(4n^2,2n-1)$ does not bound a simply-connected smooth 4-manifold with $b_2=b_2^+=1$: Suppose that $L(4n^2,2n-1)$ bounds such a manifold $W_n$. Then $W_n$ must be spin \cite[Corollary 5.7.6]{Gompf-Stipsicz-1999}, as its intersection form is represented by the $1\times 1$ matrix $\left( 4n^2\right)$ by Corollary \ref{cor:linking_form}. Under the identification $\textup{Spin}^c(L(p,q))\cong \mathbb{Z}_p$ \cite{Ozsvath-Szabo-2003} (note that their orientation convention for lens spaces is opposite to ours), the two spin structures of $L(4n^2,2n-1)$ correspond to $n-1$ and $2n^2+n-1$. The corresponding $d$-invariants are $-\frac{2n+1}{4}$ and $\frac{2n-1}{4}$, respectively, which contradicts \cite[Corollary 2.18]{Jo-Park-Park-2024}. 
\end{proof}

\begin{remark}\label{rmk:prop4.3}
The argument in the proof of Proposition \ref{prop:construction3} shows that the collection of all lens spaces bounding a smooth rational homology 4-ball, denoted by $\mathcal{R}$ in \cite{Lisca-2007}, is a subcollection of $\LB$. On the other hand, the lens spaces in Proposition \ref{prop:orthogonal_application} do not bound rational homology balls in general: If a lens space $L(p,q)$ bounds a rational homology ball, then $p$ must be a square number \cite{Lisca-2007}. Thus, the examples in Proposition \ref{prop:orthogonal_application} represent relatively nontrivial elements of $\LB\setminus \LS$.
\end{remark}

\section{Discussion of Tange's and Ballinger's Examples}\label{appendix}
In this section, we discuss the examples of Tange \cite{Tange-2018} and Ballinger \cite{Ballinger-2022} that are expected to lie in $\LS \setminus \LK$.

\subsection{Tange's Examples}
 As mentioned in the introduction, $L(17,2)$ is an example of a lens space contained in $\LS \setminus \LK$. We note that a simply-connected, smooth 4-manifold with $b_2=b_2^+=1$ bounded by $L(17,2)$ can be also obtained from a $\mathbb{Q}$-homology $\mathbb{CP}^2$ as follows: Consider a configuration of rational curves shown in Figure \ref{fig:qhcp2_(17,2)}, where the dotted lines represent $(-1)$-curves, and the solid lines represent $(-2)$-curves, except for a unique $(-3)$-curve. This configuration is obtained by blowing up seven times from the union of a zero section and two fibers in the Hirzebruch surface $\Sigma_2$. Contracting the eight solid lines yields a $\mathbb{Q}$-homology $\mathbb{CP}^2$ with a unique cyclic singularity of type $(17,15)$. Applying the argument in the proof of Proposition \ref{prop:construction2}, it is straightforward to verify that its smooth locus is simply-connected.
\begin{figure}[h]
\begin{tikzpicture}[scale=1]
      \draw (-3.5,0)--(2,0) (1.5,-0.2)--(1.5,1.2) (1.7,1)--(0.3,1) (0.5,0.8)--(0.5,2.2) (1.6,2)--(0.3,2) (-3,-0.2)--(-3,1.2) (-3.2,1)--(-1.8,1) (-2,0.8)--(-2,2.1);
       \draw[densely dashed] (1.2,1.5)--(-0.2,1.5) (-2.5,0.5)--(-2.5,1.5);
        \draw (1,2) node[above]{$-3$};
\end{tikzpicture}
\caption{A configuration after seven blow-ups from Hirzebruch surface $\Sigma_2$.}
\label{fig:qhcp2_(17,2)}
\end{figure}
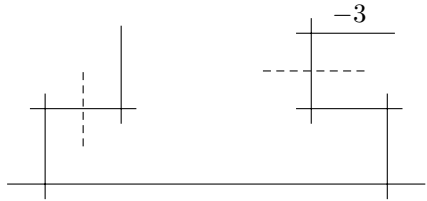

In \cite{Tange-2018}, Tange studied a single 2-handle cobordism from Brieskorn spheres to lens spaces. Since certain Brieskorn spheres are known to bound contractible smooth 4-manifolds, this allows us to construct simply-connected, smooth 4-manifolds with $b_2=1$ and a lens space boundary by taking the union of the cobordism and the contractible 4-manifold along the Brieskorn sphere. Several infinite families of lens spaces contained in $\LS$, but expected not to be contained in $\LK$, are presented in \cite{Tange-2018}. Let us consider one of his families (similar arguments apply to the others), given by 
\[
L(35\ell^2+21\ell+3,21\ell^2+14\ell+2) \quad (\ell=1,2,\dots).
\]
Note that Tange's orientation convention of lens spaces is opposite to ours. We confirm that this family is indeed not contained in $\LK$. Let $p_\ell:=35\ell^2+21\ell+3$ and $q_\ell:=21\ell^2+14\ell+2$, so that 
\[
\frac{p_\ell}{q_\ell}=\left[2,3,\ell+1,8,\left[2\right]^{\ell-1} \right].
\]

\begin{proposition}\label{prop:changemaker_application_2} For each $\ell\geq 1$, the lens space $L(p_\ell,q_\ell)$ does not bound a compact, oriented, smooth $4$-manifold with $b_2=b_2^+=1$ built from a single $0$- and $2$-handle. Consequently, $L(p_\ell,q_\ell)$ cannot be obtained by a positive integer surgery along any knot in $S^3$. 
\end{proposition}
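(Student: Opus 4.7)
The plan is to apply Greene's changemaker criterion. By Theorem~\ref{thm:Greene} and Theorem~\ref{thm:changemaker2}, the statement is equivalent to the assertion that $Q_{X(p_\ell,q_\ell)}$ does not embed into $-\mathbb{Z}^{\ell+4}$ with a changemaker vector of square $-p_\ell$ generating the orthogonal complement. Mirroring the strategy of Proposition~\ref{prop:changemaker_application}, I will classify every lattice embedding $\iota\colon Q_{X(p_\ell,q_\ell)}\hookrightarrow -\mathbb{Z}^{\ell+4}$ up to automorphism of the ambient lattice and then show that in every case the orthogonal-complement generator fails to satisfy Definition~\ref{def:changemaker}.

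Let $u_1,u_2,u_3,u_4,v_1,\dots,v_{\ell-1}\in H_2(X(p_\ell,q_\ell);\mathbb{Z})$ be the basis given by the vertices of the linear plumbing graph, with self-intersections $-2,-3,-(\ell+1),-8,-2,\dots,-2$. Following the sequence of basis changes at the start of the proof of Proposition~\ref{prop:changemaker_application}, I first normalize $\iota(u_1)=e_1-e_2$ and, propagating from the opposite end inward along the $(-2)$-tail, $\iota(v_j)=e_{j+4}-e_{j+5}$ for $j=1,\dots,\ell-1$. The vector $\iota(u_2)$ is a sum of three signed unit vectors, and the relations $\iota(u_2)\cdot \iota(u_1)=1$ and $\iota(u_2)\cdot \iota(v_j)=0$ leave a short list of candidates. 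The main step is the analysis of $\iota(u_3)$: its square is $-(\ell+1)$, and orthogonality to every $\iota(v_j)$ forces the coefficients of $\iota(u_3)$ along $e_5,\dots,e_{\ell+4}$ to all be equal to a common integer $c$, which is then constrained by $\iota(u_3)^2=-(\ell+1)$ together with $\iota(u_3)\cdot \iota(u_2)=1$. Finally, $\iota(u_4)$ has square $-8$, so each of its entries has absolute value at most $2$; combined with $\iota(u_4)\cdot \iota(u_3)=\iota(u_4)\cdot \iota(v_1)=1$ and orthogonality to the remaining basis vectors, this leaves finitely many possibilities.

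For each surviving embedding I compute a generator $\mathbf{w}$ of the rank-one orthogonal complement; the identity $\mathbf{w}^2=-p_\ell$ from Corollary~\ref{cor:orthogonal_complement} serves as a sanity check on the enumeration. To conclude that no $\mathbf{w}$ is a changemaker I expect, as in the proof of Proposition~\ref{prop:changemaker_application}, to argue that either no coordinate of $\mathbf{w}$ has absolute value $1$, or some large entry of $\mathbf{w}$ exceeds the sum of the strictly smaller ones by more than one. The main obstacle will be the case analysis of $\iota(u_3)$ and $\iota(u_4)$: the integer $c$ introduces a one-parameter family of candidate embeddings, and the small cases $\ell=1$ (empty tail, continued fraction $[2,3,2,8]$) and $\ell=2$ (tail of length one) will have to be handled separately. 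A short divisibility argument on $p_\ell=35\ell^2+21\ell+3$ may also be needed to rule out any changemaker candidate that survives the structural classification.
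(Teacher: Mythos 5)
Your outline is in essence the same proof as in the paper: reduce to the changemaker criterion via Theorems~\ref{thm:Greene} and~\ref{thm:changemaker2}, normalize the $(-2)$-chains to adjacent differences of basis vectors, pin down the remaining three vertices, and show the resulting orthogonal-complement generator is never a changemaker. The one tactical divergence is your ordering: you propose analyzing $\iota(u_3)$ (square $-(\ell+1)$) before $\iota(u_4)$ (square $-8$). The paper does the opposite, and this is genuinely more efficient. Because $\iota(u_4)^2=-8$ is a fixed small number, the constraint $(\ell-1)a^2\le 8$ forces the common coefficient $a$ along the $v$-tail to vanish once $\ell\ge 10$, immediately collapsing $\iota(u_4)$ to a short finite list; only then is $\iota(u_3)$ analyzed, and the residual one-parameter families (the $\ell=35\delta^2+48\delta+16$ and $\ell=35\delta^2+8\delta$ cases) emerge in a controlled way. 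With your ordering, the constraint on $u_3$ yields $\ell c^2\le \ell+1$, so $c\in\{0,\pm 1\}$, and for $c=0$ you get $2c_1^2+c_3^2+c_4^2=\ell+1$, a two-parameter Diophantine family that you would have to carry forward and only later prune using $u_4$. This would work, but requires more bookkeeping; you correctly flag it as the main obstacle. Two smaller remarks: the divisibility argument on $p_\ell$ you anticipate is not needed in the paper's version (the obstruction is purely structural, namely that no coefficient of the complement generator has absolute value $1$, or the sizes violate the changemaker inequality); and the small cases the paper treats separately are $\ell\le 9$, not just $\ell=1,2$, since the bound forcing $a=0$ requires $\ell\ge 10$.
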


\begin{proof} 
Similar to the proof of Proposition \ref{prop:changemaker_application}, we show that the lattice $Q_{X(p_\ell,q_\ell)}$ does not embed as the orthogonal complement of a changemaker in $-\mathbb{Z}^{\ell+4}$: Assume that $\iota\colon Q_{X(p_\ell,q_\ell)} \hookrightarrow -\mathbb{Z}^{\ell+4}$ is an embedding. Let $u_1,u_2,u_3,u_4,v_1,\dots,v_{\ell-1} \in H_2(X(p_\ell,q_\ell);\mathbb{Z})$ denote the homology classes of the spheres corresponding to the vertices in the plumbing graph, as shown in Figure \ref{fig:plumbing_graph_of_X(p_l,q_l)}.

\begin{figure}[h]
\centering
\begin{tikzpicture}[scale=1.2]
\draw (-1.5,0) node[circle, fill, inner sep=1.2pt, black]{};
\draw (0.5,0) node[circle, fill, inner sep=1.2pt, black]{};
\draw (2.5,0) node[circle, fill, inner sep=1.2pt, black]{};
\draw (4.5,0) node[circle, fill, inner sep=1.2pt, black]{};
\draw (8,0) node[circle, fill, inner sep=1.2pt, black]{};
\draw (6,0) node[circle, fill, inner sep=1.2pt, black]{};

\draw (-1.5,0) node[below]{$-2$};
\draw (0.5,0) node[below]{$-3$};
\draw (2.5,0) node[below]{$-(\ell+1)$};
\draw (4.5,0) node[below]{$-8$};
\draw (6,0) node[below]{$-2$};
\draw (8,0) node[below]{$-2$};

\draw (-1.5,0) node[above]{$u_1$};
\draw (0.5,0) node[above]{$u_2$};
\draw (2.5,0) node[above]{$u_3$};
\draw (4.5,0) node[above]{$u_4$};

\draw (6,0) node[above]{$v_1$};
\draw (8,0) node[above]{$v_{\ell-1}$};

\draw (7,0) node{$\cdots$};
\draw [thick,decorate,decoration={brace,amplitude=3pt},xshift=0pt]
	(6,0.5) -- (8,0.5) node [black,midway,xshift=0pt,yshift=12pt] 
	{$\ell-1$};
\draw (-1.5,0)--(6.5,0) (7.5,0)--(8,0) ;
\end{tikzpicture}
\caption{The plumbing graph of $X(p_\ell,q_\ell)$ (Proposition \ref{prop:changemaker_application_2}).}
\label{fig:plumbing_graph_of_X(p_l,q_l)}
\end{figure}
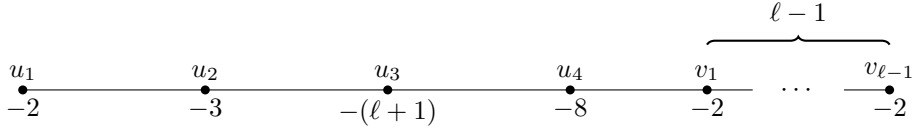

We first consider the case $\ell\geq 10$. As in the proof of Proposition \ref{prop:changemaker_application}, we may assume that $\iota(v_i)=e_i-e_{i+1}$ for $i=1,\dots,\ell-1$ and $\iota(u_1)=e_{\ell+1}-e_{\ell+2}$. Next, consider $\iota(u_2)$. We have $\iota(u_2)=\pm e_i \pm e_j \pm e_k$ with $i<j<k$. From the conditions $u_2\cdot v_t=0$ for $t=1,\dots,\ell-1$ and $u_2\cdot u_1=1$, it easily follows that either $i=\ell+1$ or $i=\ell+2$. By applying an automorphism of $-\mathbb{Z}^{\ell+4}$, we may assume without loss of generality that $(i,j,k)=(\ell+2,\ell+3,\ell+4)$ and that $\iota(u_2)=e_{\ell+2}+e_{\ell+3}+e_{\ell+4}$.

Now consider $\iota(u_4)$. Write $\iota(u_4)=\sum_{i=1}^{\ell+4}a_ie_i$. Then we have $a:=a_1+1=a_2=\cdots=a_\ell$, $b:=a_{\ell+1}=a_{\ell+2}$, $a_{\ell+2}+a_{\ell+3}+a_{\ell+4}=0$ and $\sum_i a_i^2 = 8$. Since $\sum_i a_i^2 \geq (\ell-1)a^2 \geq 9a^2$, we must have $a=0$. This implies $a_1=-1$ and $a_2=\cdots=a_\ell=0$. Letting $c:=a_{\ell+3}$ and $d:=a_{\ell+4}$, it follows that $b+c+d=0$ and $1+2b^2+c^2+d^2=8$. Thus, the possible solutions for $(b,c,d)$ are $(1,-2,1), (1,1,-2), (-1,2,-1)$, or $(-1,-1,2)$. Clearly, the first two cases are equivalent and the last two cases are also equivalent. Therefore, we may assume $(b,c,d)=(1,1,-2)$ or $(-1,-1,2)$.

\textbf{Case 1:} Assume that $(b,c,d)=(-1,-1,2)$, i.e., $\iota(u_4)=-e_1-e_{\ell+1}-e_{\ell+2}-e_{\ell+3}+2e_{\ell+4}$. Now, write $\iota(u_3)=\sum_{i=1}^{\ell+4}b_ie_i$ with $\sum_i b_i^2 = \ell+1$. We have $\alpha:=b_1=\cdots=b_\ell$, $\beta:=b_{\ell+1}=b_{\ell+2}$ and $\beta+\gamma+\delta=-1=-\alpha-2\beta-\gamma+2\delta$, where $\gamma:=b_{\ell+3}$ and $\delta:=b_{\ell+4}$. Since $\ell+1=\sum_i b_i^2\geq \ell \alpha^2$, we must have $\alpha=\pm 1$ or $\alpha=0$. If $\alpha=1$, then we have \[\beta+\gamma+\delta=-1,\quad -2\beta-\gamma+2\delta=0,\quad 2\beta^2+\gamma^2+\delta^2=1,\] but this has no solution. If $\alpha=-1$, then we have \[\beta+\gamma+\delta=-1,\quad -2\beta-\gamma+2\delta=-2, \quad 2\beta^2+\gamma^2+\delta^2=1.\] The unique solution is $(\beta,\gamma,\delta)=(0,0,-1)$. In this case, $\iota(u_3)=-(e_1+\cdots+e_\ell)-e_{\ell+4}$. This corresponds the embedding shown in Figure \ref{fig:embedding_X(p_l,q_l)3} (a). The orthogonal complement is generated by the vector \[
(e_1+\cdots+e_\ell)-(3\ell+1)(e_{\ell+1}+e_{\ell+2})+(4\ell+1)e_{\ell+3}-\ell e_{\ell+4}.
\]
However, this vector cannot be mapped to a changemaker under any automorphism of $-\mathbb{Z}^{\ell+4}$.

If $\alpha=0$, then we have \[\beta+\gamma+\delta=-1=-2\beta-\gamma+2\delta,\quad 2\beta^2+\gamma^2+\delta^2=\ell+1.\] From these equations, we find \[\beta=3\delta+2,\quad \gamma=-4\delta-3,\quad \ell=35\delta^2+48\delta+16.\] Thus, $\ell=35\delta^2+48\delta+16$ must have an integer solution. In this case, $\iota(u_4)=(3\delta+2)(e_{\ell+1}+e_{\ell+2})-(4\delta+3)e_{\ell+3}+\delta e_{\ell+4}$. This corresponds to the embedding shown in Figure \ref{fig:embedding_X(p_l,q_l)3} (b). The orthogonal complement is generated by the vector \[
(35\delta+24)(e_1+\cdots+e_\ell)-(5\delta+3)(e_{\ell+1}+e_{\ell+2})-(5\delta+4)e_{\ell+3}+(10\delta+7)e_{\ell+4},
\]
which also cannot be mapped to a changemaker under any automorphism of $-\mathbb{Z}^{\ell+4}$.

\textbf{Case 2:} Assume that $(b,c,d)=(1,1,-2)$, i.e., $\iota(u_4)=-e_1+e_{\ell+1}+e_{\ell+2}+e_{\ell+3}-2e_{\ell+4}$. Now, write $\iota(u_3)=\sum_{i=1}^{\ell+4}b_ie_i$ with $\sum_i b_i^2 = \ell+1$. We have $\alpha:=b_1=\cdots=b_\ell$, $\beta:=b_{\ell+1}=b_{\ell+2}$ along with the conditions
\[\beta+\gamma+\delta=-1=-\alpha+2\beta+\gamma-2\delta,\] 
where $\gamma:=b_{\ell+3}$ and $\delta:=b_{\ell+4}$. As in Case 1, we must have $\alpha=\pm 1$ or $\alpha=0$. If $\alpha=1$, then we have
\[\beta+\gamma+\delta=-1,\quad 2\beta+\gamma-2\delta=0,\quad 2\beta^2+\gamma^2+\delta^2=1\] 
but this system has no common integer solutions. If $\alpha=-1$, it is easily verified that no solutions exist. If $\alpha=0$, the equations reduce to \[\beta=3\delta,\quad \gamma=-4\delta-1, \quad\ell=35\delta^2+8\delta.\] Thus, $\ell=35\delta^2+8\delta$ must have an integer solution. In this case, we have $\iota(u_4)=3\delta(e_{\ell+1}+e_{\ell+2})-(4\delta+1)e_{\ell+3}+\delta e_{\ell+4}$. This corresponds to the embedding shown in Figure \ref{fig:embedding_X(p_l,q_l)3} (c). The orthogonal complement is generated by the vector \[
(35\delta+4)(e_1+\cdots+e_\ell)-(5\delta+1)(e_{\ell+1}+e_{\ell+2})-5\delta e_{\ell+3}+(10\delta+1)e_{\ell+4},
\]
which also cannot be mapped to a changemaker under any automorphism of $-\mathbb{Z}^{\ell+4}$.
This completes the proof for the case $\ell \geq 10$. 

\vspace{-1 em}

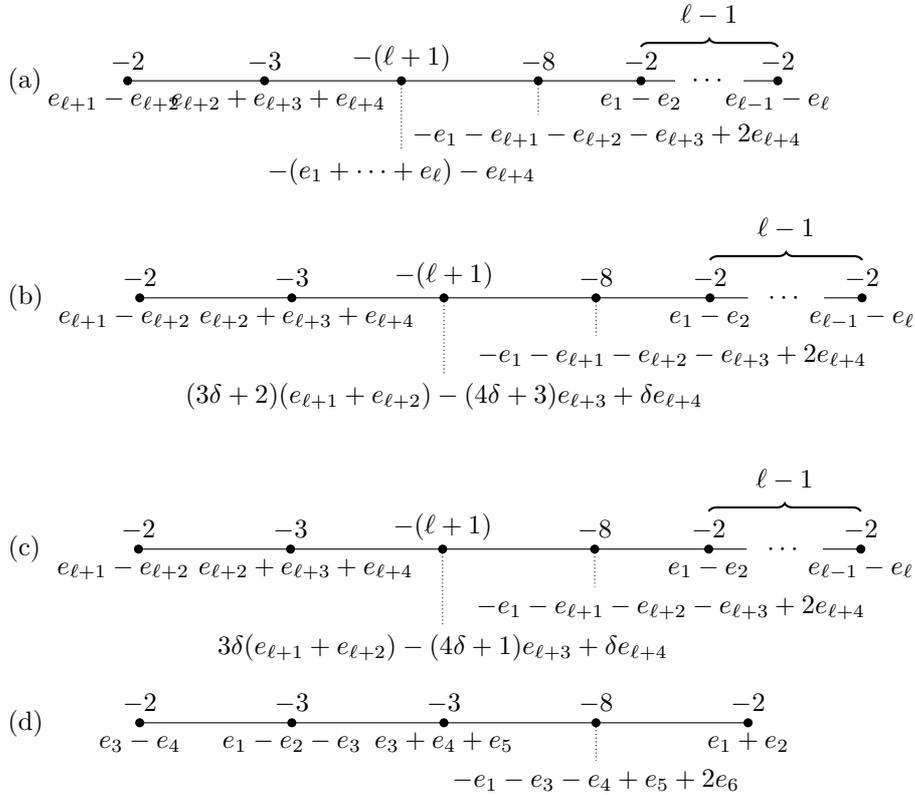
\begin{figure}[h]
\flushleft 
\begin{tikzpicture}[scale=.9]
    \node at (-3, 0) {(a)};
    \draw (-1.5,0) node[circle, fill, inner sep=1.2pt, black]{};
    \draw (0.5,0) node[circle, fill, inner sep=1.2pt, black]{};
    \draw (2.5,0) node[circle, fill, inner sep=1.2pt, black]{};
    \draw (4.5,0) node[circle, fill, inner sep=1.2pt, black]{};
    \draw (8,0) node[circle, fill, inner sep=1.2pt, black]{};
    \draw (6,0) node[circle, fill, inner sep=1.2pt, black]{};
    
    \draw (-1.5,0) node[above]{$-2$};
    \draw (0.5,0) node[above]{$-3$};
    \draw (2.5,0) node[above]{$-(\ell+1)$};
    \draw (4.5,0) node[above]{$-8$};
    \draw (6,0) node[above]{$-2$};
    \draw (8,0) node[above]{$-2$};
    
    \draw (-1.7,0) node[below]{$e_{\ell+1}-e_{\ell+2}$};
    \draw (0.7,0) node[below]{$e_{\ell+2}+e_{\ell+3}+e_{\ell+4}$};
    \draw (2.5,-1) node[below]{$-(e_1+\cdots+e_\ell)-e_{\ell+4}$};
    \draw (5.5,-0.5) node[below]{$-e_1-e_{\ell+1}-e_{\ell+2}-e_{\ell+3}+2e_{\ell+4}$};
    \draw (6,0) node[below]{$e_1-e_2$};
    \draw (8,0) node[below]{$e_{\ell-1}-e_\ell$};
    
    \draw[densely dotted] (2.5,0)--(2.5,-1) (4.5,0)--(4.5,-0.5) ;
    \draw (7,0) node{$\cdots$};
    \draw [thick,decorate,decoration={brace,amplitude=3pt},xshift=0pt]
    	(6,0.5) -- (8,0.5) node [black,midway,xshift=0pt,yshift=12pt] 
    	{$\ell-1$};
    \draw (-1.5,0)--(6.5,0) (7.5,0)--(8,0);
\end{tikzpicture}\vspace{15pt}

\vspace{-1 em}

\begin{tikzpicture}[scale=1]
\node at (-3, 0) {(b)};
    \draw (-1.5,0) node[circle, fill, inner sep=1.2pt, black]{};
    \draw (0.5,0) node[circle, fill, inner sep=1.2pt, black]{};
    \draw (2.5,0) node[circle, fill, inner sep=1.2pt, black]{};
    \draw (4.5,0) node[circle, fill, inner sep=1.2pt, black]{};
    \draw (8,0) node[circle, fill, inner sep=1.2pt, black]{};
    \draw (6,0) node[circle, fill, inner sep=1.2pt, black]{};
    
    \draw (-1.5,0) node[above]{$-2$};
    \draw (0.5,0) node[above]{$-3$};
    \draw (2.5,0) node[above]{$-(\ell+1)$};
    \draw (4.5,0) node[above]{$-8$};
    \draw (6,0) node[above]{$-2$};
    \draw (8,0) node[above]{$-2$};
    
    \draw (-1.7,0) node[below]{$e_{\ell+1}-e_{\ell+2}$};
    \draw (0.7,0) node[below]{$e_{\ell+2}+e_{\ell+3}+e_{\ell+4}$};
    \draw (2.5,-1) node[below]{$(3\delta+2)(e_{\ell+1}+e_{\ell+2})-(4\delta+3)e_{\ell+3}+\delta e_{\ell+4}$};
    \draw (5.5,-0.5) node[below]{$-e_1-e_{\ell+1}-e_{\ell+2}-e_{\ell+3}+2e_{\ell+4}$};
    \draw (6,0) node[below]{$e_1-e_2$};
    \draw (8,0) node[below]{$e_{\ell-1}-e_\ell$};
    
    \draw[densely dotted] (2.5,0)--(2.5,-1) (4.5,0)--(4.5,-0.5) ;
    \draw (7,0) node{$\cdots$};
    \draw [thick,decorate,decoration={brace,amplitude=3pt},xshift=0pt]
    	(6,0.5) -- (8,0.5) node [black,midway,xshift=0pt,yshift=12pt] 
    	{$\ell-1$};
    \draw (-1.5,0)--(6.5,0) (7.5,0)--(8,0) ;
\end{tikzpicture}\vspace{15pt}
\vspace{-1 em}
\begin{tikzpicture}[scale=1]
    \node at (-3, 0) {(c)};
    \draw (-1.5,0) node[circle, fill, inner sep=1.2pt, black]{};
    \draw (0.5,0) node[circle, fill, inner sep=1.2pt, black]{};
    \draw (2.5,0) node[circle, fill, inner sep=1.2pt, black]{};
    \draw (4.5,0) node[circle, fill, inner sep=1.2pt, black]{};
    \draw (8,0) node[circle, fill, inner sep=1.2pt, black]{};
    \draw (6,0) node[circle, fill, inner sep=1.2pt, black]{};
    
    \draw (-1.5,0) node[above]{$-2$};
    \draw (0.5,0) node[above]{$-3$};
    \draw (2.5,0) node[above]{$-(\ell+1)$};
    \draw (4.5,0) node[above]{$-8$};
    \draw (6,0) node[above]{$-2$};
    \draw (8,0) node[above]{$-2$};
    
    \draw (-1.7,0) node[below]{$e_{\ell+1}-e_{\ell+2}$};
    \draw (0.7,0) node[below]{$e_{\ell+2}+e_{\ell+3}+e_{\ell+4}$};
    \draw (2.5,-1) node[below]{$3\delta(e_{\ell+1}+e_{\ell+2})-(4\delta+1)e_{\ell+3}+\delta e_{\ell+4}$};
    \draw (5.5,-0.5) node[below]{$-e_1-e_{\ell+1}-e_{\ell+2}-e_{\ell+3}+2e_{\ell+4}$};
    \draw (6,0) node[below]{$e_1-e_2$};
    \draw (8,0) node[below]{$e_{\ell-1}-e_\ell$};
    
    \draw[densely dotted] (2.5,0)--(2.5,-1) (4.5,0)--(4.5,-0.5) ;
    \draw (7,0) node{$\cdots$};
    \draw [thick,decorate,decoration={brace,amplitude=3pt},xshift=0pt]
    	(6,0.5) -- (8,0.5) node [black,midway,xshift=0pt,yshift=12pt] 
    	{$\ell-1$};
    \draw (-1.5,0)--(6.5,0) (7.5,0)--(8,0) ;
\end{tikzpicture}\vspace{15pt}

\begin{tikzpicture}[scale=1]
    \node at (-3,0) {(d)};
    \draw (-1.5,0) node[circle, fill, inner sep=1.2pt, black]{};
    \draw (0.5,0) node[circle, fill, inner sep=1.2pt, black]{};
    \draw (2.5,0) node[circle, fill, inner sep=1.2pt, black]{};
    \draw (4.5,0) node[circle, fill, inner sep=1.2pt, black]{};
    \draw (6.5,0) node[circle, fill, inner sep=1.2pt, black]{};
    
    \draw (-1.5,0) node[above]{$-2$};
    \draw (0.5,0) node[above]{$-3$};
    \draw (2.5,0) node[above]{$-3$};
    \draw (4.5,0) node[above]{$-8$};
    \draw (6.5,0) node[above]{$-2$};
    
    \draw (-1.5,0) node[below]{$e_3-e_4$};
    \draw (0.5,0) node[below]{$e_1-e_2-e_3$};
    \draw (2.5,0) node[below]{$e_3+e_4+e_5$};
    \draw (4.5,-0.5) node[below]{$-e_1-e_3-e_4+e_5+2e_6$};
    \draw (6.5,0) node[below]{$e_1+e_2$};
    
    \draw (-1.5,0)--(6.5,0) ;
    \draw[densely dotted] (4.5,0)--(4.5,-0.5);
\end{tikzpicture}
\caption{Embeddings of $Q_{X(p_\ell,q_\ell)}$ into $-\mathbb{Z}^{\ell+4}$, (b) for when $\ell=35\delta^2+48\delta+16$ has an integer solution $\delta$, (c) for when $\ell=35\delta^2+8\delta$ has an integer solution $\delta$, and (d) for when $\ell=2$ (Proposition \ref{prop:changemaker_application_2}).}
\label{fig:embedding_X(p_l,q_l)3}
\end{figure}

Note that the embedding given in Figure \ref{fig:embedding_X(p_l,q_l)3} (a) also works for $\ell \leq 9$. For the finitely many cases $\ell=1,\dots,9$, it can be directly verified that the embedding shwon in Figure \ref{fig:embedding_X(p_l,q_l)3} (a) is the unique embedding into $-\mathbb{Z}^{\ell+4}$ up to an automorphism, except when $\ell=2$ or $\ell=3$. For $\ell=3$, the embedding from Figure \ref{fig:embedding_X(p_l,q_l)3} (b) with $\delta=-1$ provides another embedding into $-\mathbb{Z}^{\ell+4}=-\mathbb{Z}^7$. It can be checked that there are no additional embeddings exist. For $\ell=2$, Figure \ref{fig:embedding_X(p_l,q_l)3} (d) shows an embedding into $-\mathbb{Z}^6$ whose orthogonal complement is generated by $2(e_1-e_2)+4(e_3+e_4)-8e_5+9e_9$. It is straightforward to verify that no further embeddings exist. Thus, even for the case $\ell\leq 9$, we conclude that there is no embedding $Q_{X(p_\ell,q_\ell)}\hookrightarrow -\mathbb{Z}^{\ell+4}$ whose orthogonal complement is generated by a changemaker. 
\end{proof}

Finally, we show that most members of this family cannot be obtained from a $\mathbb{Q}$-homology $\mathbb{CP}^2$ construction by proving that no $\mathbb{Q}$-homology $\mathbb{CP}^2$ exists with a unique cyclic singularity of type $(p_\ell,p_\ell-q_\ell)$.

\begin{proposition}
    For each $\ell\geq 1$, let $p_\ell=35\ell^2+21\ell+3$ and $q_\ell=21\ell^2+14\ell+2$ be relative prime integers. Then there is no $\mathbb{Q}$-homology $\mathbb{CP}^2$ with a unique cyclic singularity of type $(p_\ell,p_\ell-q_\ell)$ if $84\ell^2+84\ell+25$ is not a square number.
\end{proposition}

\begin{proof}
Note that \[
\frac{p_\ell}{p_\ell-q_\ell}=\frac{35\ell^2+21\ell+3}{14\ell^2+7\ell+1}=\begin{cases}
    [3,4,2,2,2,2,2,2], & \textrm{if $\ell=1$}, \\
    \left[ 3,3,\left[2\right]^{\ell-2},3,2,2,2,2,2,\ell+1\right], & \textrm{if $\ell\geq 2$}.
\end{cases}
\]
Thus, if $S$ is a $\mathbb{Q}$-homology $\mathbb{CP}^2$ with a unique cyclic singularity of type $(p_\ell,p_\ell-q_\ell)$, then by Proposition \ref{prop:D_square}, \[
D=84\ell^2+84\ell+25
\]
must be a square number.
\end{proof} 

\begin{remark}
Note that $84\ell^2+84\ell+25$ is not a square number for most positive integers $\ell$. Indeed, for $\ell \leq 10^8$, $84\ell^2+84\ell+25$ is a square number for only $8$ values of $\ell$.
\end{remark}
\smallskip

\subsection{Ballinger's Examples}\label{sec:Ballinger}
In \cite{Ballinger-2022}, Ballinger presents an infinite family of lens spaces that bound a simply-connected, smooth $4$-manifold with $b_2=b_2^+=1$ which cannot be constructed using a single $0$- and $2$-handle. Although this alone does not guarantee that these lens spaces are excluded from $\LK$, it is asserted in \cite[below Theorem 3.4]{Ballinger-2022} that the changemaker criterion (Theorem \ref{thm:changemaker}) can be applied to show their exclusion from $\LK$. In this section, we first show that such simply-connected $4$-manifolds with $b_2=b_2^+=1$ bounded by these lens spaces can also be constructed using a $\mathbb{Q}$-homology $\mathbb{CP}^2$ approach. We then confirm Ballinger's assertion that these lens spaces are indeed not contained in $\LK$ by examining the corresponding lattice embeddings.

For integers $n,k$ with $1<k<n-1$, define two relative prime integers $p_{n,k}$ and $q_{n,k}$ by
\[p_{n,k}:=16n^2k-16nk^2-12n^2+4k^2+8n-2 \ \ \mathrm{and} \ \ q_{n,k}:=16nk-16k^2-12n+4k+5, \]
so that \[
\frac{p_{n,k}}{q_{n,k}}=\left[n,5,\left[2\right]^{n-k-2},6,\left[2\right]^{k-2} \right].
\]

\begin{theorem}[{\cite[Theorem 1.2, Theorem 3.4]{Ballinger-2022}}]\label{thm:Ballinger} If $(2k-1,2n-1)=1$, then the lens space $L(p_{n,k},q_{n,k})$ bounds a compact, oriented, simply-connected, smooth $4$-manifold $V_{n,k}$ with $b_2(V_{n,k})=b_2^+(V_{n,k})=1$ which cannot be built from a single $0$- and $2$-handle.     
\end{theorem}

The manifold $V_{n,k}$ is constructed as follows: Start with three generic projective lines in $\mathbb{CP}^2$. Then resolve their intersections repeatedly by taking connected sums with additional copies of $\mathbb{CP}^2$ (see \cite[p.45]{Gompf-Stipsicz-1999}). This process results in an embedding of a ring-shaped plumbing of $n+2$ spheres in the connected sum $\#n\mathbb{CP}^2$. Next remove one of the spheres from this configuration and smooth two of the remaining intersection points (see \cite[p.38]{Gompf-Stipsicz-1999}). This yields a linear plumbing of $n-1$ spheres embedded in $\#n\mathbb{CP}^2$, where the complement of a neighborhood of this new configuration has $b_2=1$ and a lens space boundary. Additionally, under certain conditions, the complement is simply-connected \cite[Proposition 3.1]{Ballinger-2022}. We notice that while this construction bears some resemblance to our $\mathbb{Q}$-homology $\mathbb{CP}^2$ construction, it cannot be achieved in the complex category.

We first show that simply-connected $4$-manifolds with $b_2=b_2^+=1$ bounded by the lens space $L(p_{n,k},q_{n,k})$ can also be obtained from a $\mathbb{Q}$-homology $\mathbb{CP}^2$ construction. 

\begin{proposition}\label{prop:Ballinger_construction} If $(2k-1,2n-1)=1$, then the lens space $L(p_{n,k},q_{n,k})$ bounds a compact, oriented, simply-connected, smooth $4$-manifold $W_{n,k}$ with $b_2(W_{n,k})=b_2^+(W_{n,k})=1$ which is the complement of a cone neighborhood of a singularity in a $\mathbb{Q}$-homology $\mathbb{CP}^2$.
\end{proposition}
\begin{proof} Note that \[
\frac{p_{n,k}}{p_{n,k}-q_{n,k}}=\left[ \left[2\right]^{n-2},3,2,2,n-k+1,2,2,2,k \right].
\]

Consider a configuration formed by the union of two sections and two fibers in the Hirzebruch surface $\Sigma_{n-k+1}$ of degree $n-k+1$, as illustrated in Figure \ref{fig:Ballinger} (a). By blowing up each of the two marked intersection points twice, we obtain a configuration of rational curves depicted in Figure \ref{fig:Ballinger} (b). And then, blow up the marked point $n-1$ times, followed by a single blow up at each of the final two $(-1)$-curves that are disjoint to the sections, so that we get a configuration of rational curves shown in Figure \ref{fig:Ballinger} (c).

\begin{figure}[h]
\centering
\begin{tikzpicture}[scale=.9]
    \node at (-3,1) {(a)};
    \draw (-2,0)--(2,0) (1.5,-0.2)--(1.5,2.2)  (-1.5,-0.2)--(-1.5,2.2) (-2,2)--(2,2) ;
    \draw (0,0) node[above]{$-(n-k+1)$};
    \draw (0,2) node[above]{$n-k+1$};
    \draw (-1.5,1) node[left]{$0$};
    \draw (1.5,1) node[right]{$0$};
    \draw (1.5,2) node[circle, fill, inner sep=1.2pt, black]{};
    \draw (-1.5,2) node[circle, fill, inner sep=1.2pt, black]{};

    \begin{scope}[shift={(2,0)}]
    \node at (3,1) {(b)};
    \draw (4,0)--(8,0) (7.1,-0.2)--(7.7,1.2) (4.3,1.2)--(4.9,-0.2) (7.7,1.8)--(7.1,3.2) (4.3,1.8)--(4.9,3.2) (4.4,0.7)--(4.4,2.3) (7.6,0.7)--(7.6,2.3) (4,3)--(8,3);
    \draw (6,0) node[below]{$-(n-k+1)$};
    \draw (6,3) node[above]{$n-k-1$};
    \draw (4.6,0.3) node[left]{$-2$};
    \draw (7.4,0.3) node[right]{$-2$};
    \draw (4.4,1.5) node[left]{$-1$};
    \draw (7.6,1.5) node[right]{$-1$};
    \draw (4.7,2.7) node[left]{$-2$};
    \draw (7.3,2.7) node[right]{$-2$};
    \draw (7.185714,3) node[circle, fill, inner sep=1.2pt, black]{};
    \end{scope}
\end{tikzpicture}

\begin{tikzpicture}[scale=1.1]
    \node at (-5.5,2) {(c)};
        \draw (-3,0)--(1,0) (0.2,-0.2)--(0.9,0.8) (1,0.3)--(0.2,1.4) (0.2,1)--(0.9,2)        (-2.2,-0.2)--(-2.9,1.6) (-2.8,1.2)--(-2.8,3) (-2.9,2.6)--(-2.2,4.4)  (0.9,1.6)--(0.2,2.6) (0.2,2.8)--(0.9,3.8) (0.8,0.4)--(2.2,0.4) (-2.7,2.4)--(-4.1,2.4) (0.9,3.4)--(0.2,4.4) (-3,4.2)--(1,4.2);
\draw (0.4,2.8) node{$\vdots$};
\draw (-2.5,0.5) node[left]{$-2$};
\draw (-1,0) node[below]{$-(n-k+1)$};
\draw (-1,4.2) node[above]{$-k$};
\draw (2.2,0.4) node[right]{$-1$};
\draw (-4.1,2.4) node[left]{$-1$};
\draw (0.4,0.15) node[right]{$-2$};
\draw (0.45,1) node[right]{$-2$};
\draw (0.45,1.4) node[right]{$-3$};
\draw (-2.8,1.9) node[left]{$-2$};
\draw (-2.5,3.7) node[left]{$-2$};
\draw (0.45,3.2) node[right]{$-2$};
\draw (0.45,2.2) node[right]{$-2$};
\draw (0.4,4.05) node[right]{$-1$};

\draw (-1,4.2) node[below]{$D_0'$};
\draw (-1,0) node[above]{$D_0$};
\draw (-2.5,0.5) node[right]{$D_1'$};
\draw (-2.8,1.9) node[right]{$D_2'$};
\draw (-3.7,2.4) node[above]{$E$};
\draw (-2.5,3.7) node[right]{$D_3'$};
\draw (0.5,0.25) node[left]{$D_1$};
\draw (0.65,0.8) node[left]{$D_2$};
\draw (0.65,1.7) node[left]{$D_3$};
\draw (0.45,2.2) node[left]{$D_4$};
\draw (0.45,3.2) node[left]{$D_{n+1}$};
\draw [decorate,decoration={brace,amplitude=3pt},xshift=3pt]
	(0.95,3.2) -- (0.95,2.2) node [black,midway,xshift=18pt,yshift=0pt] 
	{$n-2$};
    \end{tikzpicture}
    \caption{Configurations over $n+5$ blow-ups from Hirzebruch surface $\Sigma_{n-k+1}$.}
    \label{fig:Ballinger}
\end{figure}
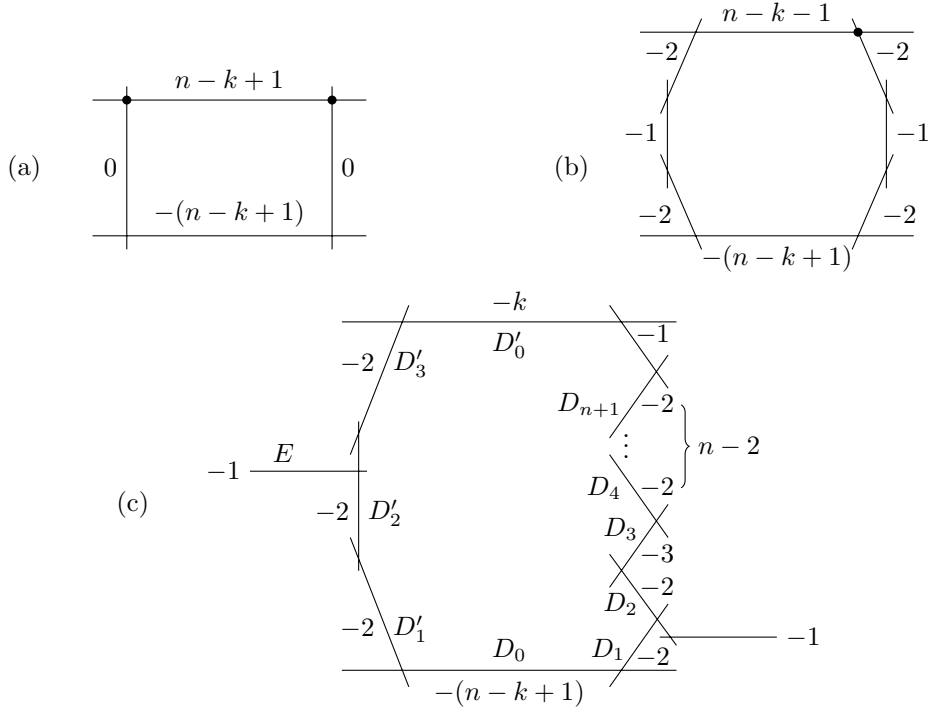

Let $\tilde{S}_{n,k}$ denote the resulting surface obtained by performing a total of $n+5$ blowing-ups on $\Sigma_{n-k+1}$. Label the rational curves as depicted in Figure \ref{fig:Ballinger} (c), and define \[
D:=D_0'+(D_1'+D_2'+D_3')+D_0+(D_1+\cdots+D_{n+1}).\] 
Let $\pi\colon\tilde{S}_{n,k}\to S_{n,k}$ denote the contraction of $D$. Then $S_{n,k}$ is a $\mathbb{Q}$-homology $\mathbb{CP}^2$ with a unique cyclic singularity of type $(p_{n,k},p_{n,k}-q_{n,k})$, and $\tilde{S}_{n,k}$ is its minimal resolution. Let $W_{n,k}$ denote the complement in $S_{n,k}$ of the cone neighborhood of the singularity. This is a smooth $4$-manifold with $b_2=b_2^+=1$ and $\partial W_{n,k}=L(p_{n,k},q_{n,k})$. 

We now show that $\pi_1(W_{n,k})=1$ using the argument provided in the proof of \cite[Theorem 3]{Lee-Park-2007} (see also \cite[Lemma 2.5]{Stipsicz-Szabo-2005}): Note that $\tilde{S}_{n,k}=W_{n,k}\cup_\partial X_{n,k}$, where $X_{n,k}:=X(p_{n,k},p_{n,k}-q_{n,k})$ is the plumbed $4$-manifold described in Section \ref{sec:preliminaries}. Since both $\tilde{S}_{n,k}$ and $X_{n,k}$ are simply-connected, Seifert-Van Kampen's theorem implies that $\pi_1(W_{n,k})/N =1$, where $N$ is the normal subgroup of $\pi_1(W_{n,k})$ generated by the image of the homomorphism $\iota_*\colon\pi_1(\partial W_{n,k})\to \pi_1(W_{n,k})$ induced by the natural inclusion. Observe that $\pi_1(\partial W_{n,k})=\pi_1(L(p_{n,k},p_{n,k}-q_{n,k}))=\mathbb{Z}_{p_{n,k}}$ is cyclic, and that the circle $C:=E\cap \partial W_{n,k}$, which is normal to the sphere $D_2'$, represents $2k-1$ times a generator of $\pi_1(\partial W_{n,k})$. On the other hand, we have \[
p_{n,k}=(2 k-1) (8n^2-8nk-4n+2k+1) -(2n-1)^2,
\]
and it follows that $C$ represents a generator of $\pi_1(\partial W_{n,k})$ assuming $(2k-1,2n-1)=1$. Thus, $N$ is equal to the normal subgroup of $\pi_1(W_{n,k})$ generated by the class of $C$. However, $C$ is homotopically trivial in $W_{n,k}$ since it can be contracted along the hemisphere $E\cap W_{n,k}$. Therefore, we conclude that $\pi_1(W_{n,k})=1$. \end{proof}

\begin{remark} As in the proof of Proposition \ref{prop:construction2}, one can show that \[
H_1(W_{n,k};\mathbb{Z})= \mathbb{Z}_{(2k-1,2n-1)}.
\]
Thus, the manifold $W_{n,k}$ described in Proposition \ref{prop:Ballinger_construction} is simply-connected if and only if $(2k-1,2n-1)=1$.
\end{remark}

Similar to Question \ref{question:example}, we may pose the following question. 
\begin{question} Are the $4$-manifolds $V_{n,k}$ in Theorem \ref{thm:Ballinger} and $W_{n,k}$ in Proposition \ref{prop:Ballinger_construction} diffeomorphic?     
\end{question}

Next, we show that the lens spaces of Theorem \ref{thm:Ballinger} are not contained in $\LK$ by applying the changemaker criterion (Theorem \ref{thm:changemaker}).

\begin{proposition}\label{prop:Ballinger_changemaker} If $(2k-1,2n-1)=1$, then the lens space $L(p_{n,k},q_{n,k})$ does not bound a compact, oriented, smooth $4$-manifold with $b_2=b_2^+=1$ built from a single $0$- and $2$-handle.     
\end{proposition}
\begin{proof} For convenience, let us denote $X(p_{n,k},q_{n,k})$ by $X_{n,k}$. We will present all lattice embeddings of $Q_{X_{n,k}}$ into $-\mathbb{Z}^{b_2(X_{n,k})+1}=-\mathbb{Z}^n$ (up to automorphism) along with the generators of the orthogonal complements. The arguments provided in Proposition \ref{prop:changemaker_application} and Proposition \ref{prop:changemaker_application_2} can be applied to show that these are indeed all embeddings $Q_{X_{n,k}}\hookrightarrow -\mathbb{Z}^n$.

\textbf{Case 1:} $k=2$. If $n=4$, then there are two embeddings as shown in Figure \ref{fig:embedding_Ballinger} (a). The corresponding orthogonal complements are generated by $e_1+3e_2+6e_3-8e_4$ and $2e_1-3e_2+4e_3-9e_4$, respectively. Neither of these vectors can be mapped to a changemaker under any automorphism of $-\mathbb{Z}^4$. For $n\geq 5$, Figure \ref{fig:embedding_Ballinger} (b) illustrates an embedding $Q_{X_{n,2}}\hookrightarrow -\mathbb{Z}^n$. Assuming $n\not\equiv 2 \mod 3$ (which follows from $(2k-1,2n-1)=1$), the orthogonal complement is generated by the vector \[
6(e_1+\cdots+e_{n-3})+3e_{n-2}+(2n-7)e_{n-1}-(4n-8)e_n,
\]
which cannot be mapped to a changemaker under any automorphism of $-\mathbb{Z}^n$. The embedding shown in Figure \ref{fig:embedding_Ballinger} (b) is the only embedding $Q_{X_{n,2}}\hookrightarrow -\mathbb{Z}^n$ unless $n\neq 7$. If $n=7$, then there is exactly one additional embedding as shown in Figure \ref{fig:embedding_Ballinger} (c), and the generator \[
e_1+e_2-e_3+e_4+7e_5-15e_6-18e_7
\]
of the orthogonal complement cannot be mapped to a changemaker under any automorphism of $-\mathbb{Z}^7$.

\textbf{Case 2:} $n=k+2\geq 5$. In this case, Figure \ref{fig:embedding_Ballinger} (d) illustrates an embedding $Q_{X_{n,k}}\hookrightarrow -\mathbb{Z}^n$. The orthogonal complement is generated by the vector \[
8(e_1+\cdots+e_{n-3})+(2n-5)e_{n-2}-(4n-10)e_{n-1}-(2n-9)e_n,
\]
which cannot be mapped to a changemaker under any automorphism of $-\mathbb{Z}^n$. This embedding is the only embedding $Q_{x_{n,k}}\hookrightarrow -\mathbb{Z}^n$ unless $n\neq 7$. If $n=7$, there is exactly one additional embedding as shown in Figure \ref{fig:embedding_Ballinger} (e), and the generator \[
e_5+2e_6-3e_7
\]
of the orthogonal complement cannot be mapped to a changemaker under any automorphism of $-\mathbb{Z}^7$.

\textbf{Case 3:} $k\geq 3$ and $n-k\geq 3$. In this case, Figure \ref{fig:embedding_Ballinger} (f) illustrates an embedding $Q_{X_{n,k}}\hookrightarrow -\mathbb{Z}^n$. Assuming $(2k-1,2n-1)=1$, the orthogonal complement is generated by the vector \[
(4k-2)(e_1+\cdots+e_{n-k-1})+(4n-4k)(e_{n-k}+\cdots+e_{n-2})+(2k-1)e_{n-1}+(2n-4k+1)e_n,
\]
which cannot be mapped to a changemaker under any automorphism of $-\mathbb{Z}^n$. This embedding is the only embedding $Q_{X_{n,k}}\hookrightarrow -\mathbb{Z}^n$ unless $(n,k)\notin \{(7,3), (7,4), (8,3), (8,4)\}$. 

For $(n,k)=(7,3)$, there are exactly two additional embeddings as shown in Figure \ref{fig:embedding_Ballinger} (g), and the corresponding generators are \[
e_1-e_2+e_3+5e_4+11(e_5-e_6)-24e_7 \quad \textrm{and} \quad 3(e_1-e_2+e_3)-e_4+e_5-e_6-8e_7,
\]
respectively. The first vector cannot be mapped to a changemaker under any automorphism of $-\mathbb{Z}^7$. The second vector can actually be mapped to a changemaker, but it has square $-94\neq -846 = p_{7,3}$.

For $(n,k)=(7,4)$, there is exactly one additional embedding as shown in Figure \ref{fig:embedding_Ballinger} (h), and the orthogonal complement is generated by \[
3(e_1+e_2-e_3)-5e_4-13(e_5-e_6)-22e_7,
\]
which cannot be mapped to a changemaker under any automorphism of $-\mathbb{Z}^7$.

For $(n,k)=(8,4)$, there is exactly one additional embedding as shown in Figure \ref{fig:embedding_Ballinger} (i), and the orthogonal complement is generated by \[
4(e_1+e_2-e_3)+5e_4-6(e_5-e_6+e_7)-35e_8,
\]
which cannot be mapped to a changemaker under any automorphism of $-\mathbb{Z}^8$.

Finally, for $(n,k)=(8,3)$, there are exactly two additional embeddings. However, we do not need to consider this case because $(2k-1,2n-1)=(5,15)=5\neq 1$.
\end{proof} 

\begin{figure}[t]
\flushleft
\begin{tikzpicture}[scale=1.1]
\node at (-3, 0) {(a)};
\draw (-1.5,0) node[circle, fill, inner sep=1.2pt, black]{};
\draw (0.5,0) node[circle, fill, inner sep=1.2pt, black]{};
\draw (2.5,0) node[circle, fill, inner sep=1.2pt, black]{};
\draw (5,0) node[circle, fill, inner sep=1.2pt, black]{};
\draw (7,0) node[circle, fill, inner sep=1.2pt, black]{};
\draw (9,0) node[circle, fill, inner sep=1.2pt, black]{};

\draw (-1.5,0) node[above]{$-4$};
\draw (0.5,0) node[above]{$-5$};
\draw (2.5,0) node[above]{$-6$};
\draw (5,0) node[above]{$-4$};
\draw (7,0) node[above]{$-5$};
\draw (9,0) node[above]{$-6$};

\draw (-1,-0.5) node[below]{$e_1-e_2-e_3-e_4$};
\draw (0.3,0) node[below]{$2e_2-e_3$};
\draw (2.7,0) node[below]{$2e_1+e_3+e_4$};

\draw (5.5,-0.5) node[below]{$e_1-e_2+e_3+e_4$};
\draw (6.8,0) node[below]{$-2e_1+e_2$};
\draw (9.2,0) node[below]{$e_1+2e_2+e_3$};

\draw[densely dotted] (-1.5,0)--(-1.5,-0.5) (5,0)--(5,-0.5);
\draw (-1.5,0)--(2.5,0) (5,0)--(9,0) ;
\end{tikzpicture}\vspace{5pt}

\begin{tikzpicture}[scale=1.2]
\node at (-3, 0) {(b)};
\draw (-1.5,0) node[circle, fill, inner sep=1.2pt, black]{};
\draw (0.5,0) node[circle, fill, inner sep=1.2pt, black]{};
\draw (2.5,0) node[circle, fill, inner sep=1.2pt, black]{};
\draw (4.5,0) node[circle, fill, inner sep=1.2pt, black]{};
\draw (6.5,0) node[circle, fill, inner sep=1.2pt, black]{};

\draw (-1.5,0) node[above]{$-n$};
\draw (0.5,0) node[above]{$-5$};
\draw (2.5,0) node[above]{$-2$};
\draw (4.5,0) node[above]{$-2$};
\draw (6.5,0) node[above]{$-6$};

\draw (-0.5,-0.5) node[below]{$-(e_1+\cdots+e_{n-2})+e_{n-1}-e_n$};
\draw (0.5,0) node[below]{$-e_1+2e_{n-2}$};
\draw (2.5,0) node[below]{$e_1-e_2$};
\draw (4.5,0) node[below]{$e_{n-4}-e_{n-3}$};
\draw (7,0) node[below]{$e_{n-3}+2e_{n-1}+e_n$};

\draw[densely dotted] (-1.5,0)--(-1.5,-0.5) ;
\draw (3.5,0) node{$\cdots$};
\draw [thick,decorate,decoration={brace,amplitude=3pt},xshift=0pt]
	(2.5,0.5) -- (4.5,0.5) node [black,midway,xshift=0pt,yshift=12pt] 
	{$n-4$};
\draw (-1.5,0)--(3,0) (4,0)--(6.5,0) ;
\end{tikzpicture}\vspace{10pt}

\begin{tikzpicture}[scale=1.1]
\node at (-3, 0) {(c)};
\draw (-1.5,0) node[circle, fill, inner sep=1.2pt, black]{};
\draw (0.5,0) node[circle, fill, inner sep=1.2pt, black]{};
\draw (2.5,0) node[circle, fill, inner sep=1.2pt, black]{};
\draw (4.5,0) node[circle, fill, inner sep=1.2pt, black]{};
\draw (6.5,0) node[circle, fill, inner sep=1.2pt, black]{};
\draw (8.5,0) node[circle, fill, inner sep=1.2pt, black]{};

\draw (-1.5,0) node[above]{$-7$};
\draw (0.5,0) node[above]{$-5$};
\draw (2.5,0) node[above]{$-2$};
\draw (4.5,0) node[above]{$-2$};
\draw (6.5,0) node[above]{$-2$};
\draw (8.5,0) node[above]{$-6$};

\draw (-0.5,-0.5) node[below]{$-e_1-e_2+e_3-e_4+e_5-e_6+e_7$};
\draw (0.3,0) node[below]{$e_1+e_2-e_3-e_6+e_7$};
\draw (2.8,0) node[below]{$-e_1+e_4$};
\draw (4.5,0) node[below]{$e_1-e_2$};
\draw (6.5,0) node[below]{$e_2+e_3$};
\draw (8.5,0) node[below]{$-e_3+2e_5+e_6$};

\draw[densely dotted] (-1.5,0)--(-1.5,-0.5) ;
\draw (-1.5,0)--(8.5,0) ;
\end{tikzpicture}

\begin{tikzpicture}[scale=1.2]
\node at (-3, 0) {(d)};
\draw (-1.5,0) node[circle, fill, inner sep=1.2pt, black]{};
\draw (0.5,0) node[circle, fill, inner sep=1.2pt, black]{};
\draw (2.5,0) node[circle, fill, inner sep=1.2pt, black]{};
\draw (4.5,0) node[circle, fill, inner sep=1.2pt, black]{};
\draw (6.5,0) node[circle, fill, inner sep=1.2pt, black]{};

\draw (-1.5,0) node[above]{$-n$};
\draw (0.5,0) node[above]{$-5$};
\draw (2.5,0) node[above]{$-6$};
\draw (4.5,0) node[above]{$-2$};
\draw (6.5,0) node[above]{$-2$};

\draw (0,-0.5) node[below]{$(e_1+\cdots+e_{n-3})-e_{n-2}+e_{n-1}+e_n$};
\draw (0.2,0) node[below]{$2e_{n-2}+e_{n-1}$};
\draw (2.6,0) node[below]{$-e_1-e_{n-1}+2e_n$};
\draw (4.5,0) node[below]{$e_1-e_2$};
\draw (6.5,0) node[below]{$e_{n-4}-e_{n-3}$};

\draw[densely dotted] (-1.5,0)--(-1.5,-0.5) ;
\draw (5.5,0) node{$\cdots$};
\draw [thick,decorate,decoration={brace,amplitude=3pt},xshift=0pt]
	(4.5,0.5) -- (6.5,0.5) node [black,midway,xshift=0pt,yshift=12pt] 
	{$n-4$};
\draw (-1.5,0)--(5,0) (6,0)--(6.5,0) ;
 \end{tikzpicture}\vspace{20pt}

\begin{tikzpicture}[scale=1.1]
\node at (-3, 0) {(e)};
\draw (-1.5,0) node[circle, fill, inner sep=1.2pt, black]{};
\draw (0.5,0) node[circle, fill, inner sep=1.2pt, black]{};
\draw (2.5,0) node[circle, fill, inner sep=1.2pt, black]{};
\draw (4.5,0) node[circle, fill, inner sep=1.2pt, black]{};
\draw (6.5,0) node[circle, fill, inner sep=1.2pt, black]{};
\draw (8.5,0) node[circle, fill, inner sep=1.2pt, black]{};

\draw (-1.5,0) node[above]{$-7$};
\draw (0.5,0) node[above]{$-5$};
\draw (2.5,0) node[above]{$-6$};
\draw (4.5,0) node[above]{$-2$};
\draw (6.5,0) node[above]{$-2$};
\draw (8.5,0) node[above]{$-2$};

\draw (-0.5,-0.5) node[below]{$-e_1-e_2+e_3-e_4+e_5+e_6+e_7$};
\draw (0.5,0) node[below]{$-2e_5+e_6$};
\draw (4.5,-0.5) node[below]{$e_1+e_2-e_3+e_5+e_6+e_7$};
\draw (4.5,0) node[below]{$-e_1+e_4$};
\draw (6.5,0) node[below]{$e_1-e_2$};
\draw (8.5,0) node[below]{$e_2+e_3$};

\draw[densely dotted] (-1.5,0)--(-1.5,-0.5) (2.5,0)--(3,-0.5) ;
\draw (-1.5,0)--(8.5,0) ;
\end{tikzpicture}\vspace{10pt}

\begin{tikzpicture}[scale=1]
\node at (-3,0) {(f)};
\draw (-1.5,0) node[circle, fill, inner sep=1.2pt, black]{};
\draw (0.5,0) node[circle, fill, inner sep=1.2pt, black]{};
\draw (2.5,0) node[circle, fill, inner sep=1.2pt, black]{};
\draw (4.5,0) node[circle, fill, inner sep=1.2pt, black]{};
\draw (6.5,0) node[circle, fill, inner sep=1.2pt, black]{};
\draw (8.5,0) node[circle, fill, inner sep=1.2pt, black]{};
\draw (10.5,0) node[circle, fill, inner sep=1.2pt, black]{};

\draw (-1.5,0) node[above]{$-n$};
\draw (0.5,0) node[above]{$-5$};
\draw (2.5,0) node[above]{$-2$};
\draw (4.5,0) node[above]{$-2$};
\draw (6.5,0) node[above]{$-6$};
\draw (8.5,0) node[above]{$-2$};
\draw (10.5,0) node[above]{$-2$};

\draw (1.5,-1) node[below]{$-(e_1+\cdots+e_{n-k-1})+(e_{n-k}+\cdots+e_{n-2})-e_{n-1}+e_n$};
\draw (0.5,0) node[below]{$-e_1+2e_{n-1}$};
\draw (2.5,0) node[below]{$e_1-e_2$};
\draw (4.5,-0.5) node[below]{$e_{n-k-2}-e_{n-k-1}$};
\draw (6.5,0) node[below]{$e_{n-k-1}-e_{n-k}+2e_n$};
\draw (8.5,-0.5) node[below]{$e_{n-k}-e_{n-k+1}$};
\draw (10.5,0) node[below]{$e_{n-3}-e_{n-2}$};

\draw[densely dotted] (-1.5,0)--(-1.5,-1) (4.5,0)--(4.5,-0.5) (8.5,0)--(8.5,-0.5);
\draw (3.5,0) node{$\cdots$};
\draw (9.5,0) node{$\cdots$};
\draw [thick,decorate,decoration={brace,amplitude=3pt},xshift=0pt]
	(2.5,0.5) -- (4.5,0.5) node [black,midway,xshift=0pt,yshift=12pt] 
	{$n-k-2$};
 \draw [thick,decorate,decoration={brace,amplitude=3pt},xshift=0pt]
	(8.5,0.5) -- (10.5,0.5) node [black,midway,xshift=0pt,yshift=12pt] 
	{$k-2$};
\draw (-1.5,0)--(3,0) (4,0)--(9,0) (10,0)--(10.5,0);
\end{tikzpicture}\vspace{15pt}

\begin{tikzpicture}[scale=1.1]
\node at (-3, 0) {(g)};
\draw (-1.5,0) node[circle, fill, inner sep=1.2pt, black]{};
\draw (0.5,0) node[circle, fill, inner sep=1.2pt, black]{};
\draw (2.5,0) node[circle, fill, inner sep=1.2pt, black]{};
\draw (4.5,0) node[circle, fill, inner sep=1.2pt, black]{};
\draw (6.5,0) node[circle, fill, inner sep=1.2pt, black]{};
\draw (8.5,0) node[circle, fill, inner sep=1.2pt, black]{};

\draw (-1.5,0) node[above]{$-7$};
\draw (0.5,0) node[above]{$-5$};
\draw (2.5,0) node[above]{$-2$};
\draw (4.5,0) node[above]{$-2$};
\draw (6.5,0) node[above]{$-6$};
\draw (8.5,0) node[above]{$-2$};

\draw (-0.5,-0.5) node[below]{$e_1-e_2+e_3-e_4-e_5+e_6-e_7$};
\draw (0.5,0) node[below]{$e_1-e_2+e_5-e_6+e_7$};
\draw (3,-0.5) node[below]{$-e_1+e_3$};
\draw (4.5,0) node[below]{$e_1+e_2$};
\draw (6.5,0) node[below]{$-e_2+2e_4-e_5$};
\draw (8.5,0) node[below]{$e_5+e_6$};

\draw[densely dotted] (-1.5,0)--(-1.5,-0.5) (2.5,0)--(3,-0.5) ;
\draw (-1.5,0)--(8.5,0) ;

\draw (-1.5,-1.5) node[circle, fill, inner sep=1.2pt, black]{};
\draw (0.5,-1.5) node[circle, fill, inner sep=1.2pt, black]{};
\draw (2.5,-1.5) node[circle, fill, inner sep=1.2pt, black]{};
\draw (4.5,-1.5) node[circle, fill, inner sep=1.2pt, black]{};
\draw (6.5,-1.5) node[circle, fill, inner sep=1.2pt, black]{};
\draw (8.5,-1.5) node[circle, fill, inner sep=1.2pt, black]{};

\draw (-1.5,-1.5) node[above]{$-7$};
\draw (0.5,-1.5) node[above]{$-5$};
\draw (2.5,-1.5) node[above]{$-2$};
\draw (4.5,-1.5) node[above]{$-2$};
\draw (6.5,-1.5) node[above]{$-6$};
\draw (8.5,-1.5) node[above]{$-2$};

\draw (-0.5,-2) node[below]{$-e_1+e_2-e_3+e_4+e_5-e_6-e_7$};
\draw (0.5,-1.5) node[below]{$e_1-e_2+e_5-e_6+e_7$};
\draw (3,-2) node[below]{$-e_1+e_3$};
\draw (4.5,-1.5) node[below]{$e_1+e_2$};
\draw (6.5,-1.5) node[below]{$-e_2+2e_4-e_5$};
\draw (8.5,-1.5) node[below]{$e_5+e_6$};

\draw[densely dotted] (-1.5,-1.5)--(-1.5,-2) (2.5,-1.5)--(3,-2) ;
\draw (-1.5,-1.5)--(8.5,-1.5) ;
\end{tikzpicture}\vspace{15pt}

\begin{tikzpicture}[scale=1.1]
\node at (-3, 0) {(h)};
\draw (-1.5,0) node[circle, fill, inner sep=1.2pt, black]{};
\draw (0.5,0) node[circle, fill, inner sep=1.2pt, black]{};
\draw (2.5,0) node[circle, fill, inner sep=1.2pt, black]{};
\draw (4.5,0) node[circle, fill, inner sep=1.2pt, black]{};
\draw (6.5,0) node[circle, fill, inner sep=1.2pt, black]{};
\draw (8.5,0) node[circle, fill, inner sep=1.2pt, black]{};

\draw (-1.5,0) node[above]{$-7$};
\draw (0.5,0) node[above]{$-5$};
\draw (2.5,0) node[above]{$-2$};
\draw (4.5,0) node[above]{$-6$};
\draw (6.5,0) node[above]{$-2$};
\draw (8.5,0) node[above]{$-2$};

\draw (-0.5,-0.5) node[below]{$-e_1-e_2+e_3-e_4-e_5+e_6+e_7$};
\draw (0.5,0) node[below]{$e_1+e_2-e_3-e_5+e_7$};
\draw (3,-0.5) node[below]{$e_5+e_6$};
\draw (4.5,0) node[below]{$-e_1+2e_4-e_5$};
\draw (6.5,0) node[below]{$e_1-e_2$};
\draw (8.5,0) node[below]{$e_2+e_3$};

\draw[densely dotted] (-1.5,0)--(-1.5,-0.5) (2.5,0)--(3,-0.5) ;
\draw (-1.5,0)--(8.5,0) ;
\end{tikzpicture}\vspace{18pt}

\begin{tikzpicture}[scale=1]
\node at (-3, 0) {(i)};
\draw (-1.5,0) node[circle, fill, inner sep=1.2pt, black]{};
\draw (0.5,0) node[circle, fill, inner sep=1.2pt, black]{};
\draw (2.5,0) node[circle, fill, inner sep=1.2pt, black]{};
\draw (4.5,0) node[circle, fill, inner sep=1.2pt, black]{};
\draw (6.5,0) node[circle, fill, inner sep=1.2pt, black]{};
\draw (8.5,0) node[circle, fill, inner sep=1.2pt, black]{};
\draw (10.5,0) node[circle, fill, inner sep=1.2pt, black]{};

\draw (-1.5,0) node[above]{$-8$};
\draw (0.5,0) node[above]{$-5$};
\draw (2.5,0) node[above]{$-2$};
\draw (4.5,0) node[above]{$-2$};
\draw (6.5,0) node[above]{$-6$};
\draw (8.5,0) node[above]{$-2$};
\draw (10.5,0) node[above]{$-2$};

\draw (0.5,-0.5) node[below]{$-e_1-e_2+e_3-e_4+e_5-e_6+e_7+e_8$};
\draw (0.3,0) node[below]{$e_1+e_2-e_3+e_5-e_6$};
\draw (4.5,-0.5) node[below]{$-e_5+e_7$};
\draw (4.5,0) node[below]{$e_5+e_6$};
\draw (6.5,0) node[below]{$-e_1+2e_4-e_6$};
\draw (8.5,0) node[below]{$e_1-e_2$};
\draw (10.5,0) node[below]{$e_2+e_3$};

\draw[densely dotted] (-1.5,0)--(-1.5,-0.5) (2.5,0)--(3.7,-0.5) ;
\draw (-1.5,0)--(10.5,0) ;
\end{tikzpicture}
\caption{Embeddings of $Q_{X_{n,k}}$ into $-\mathbb{Z}^n$ in Proposition \ref{prop:Ballinger_changemaker}: (a) for $(n,k)=(4,2)$, (b) for $k=2$ and $n\geq 5$, (c) for $(n,k)=(7,2)$, (d) for $n=k+2\geq 5$, (e) for $(n,k)=(7,5)$, (f) for $k\geq 3$ and $n-k\geq 3$, (g) for $(n,k)=(7,3)$, (h) for $(n,k)=(7,4)$, and (i) for $(n,k)=(8,4)$.}
\label{fig:embedding_Ballinger}
\end{figure}
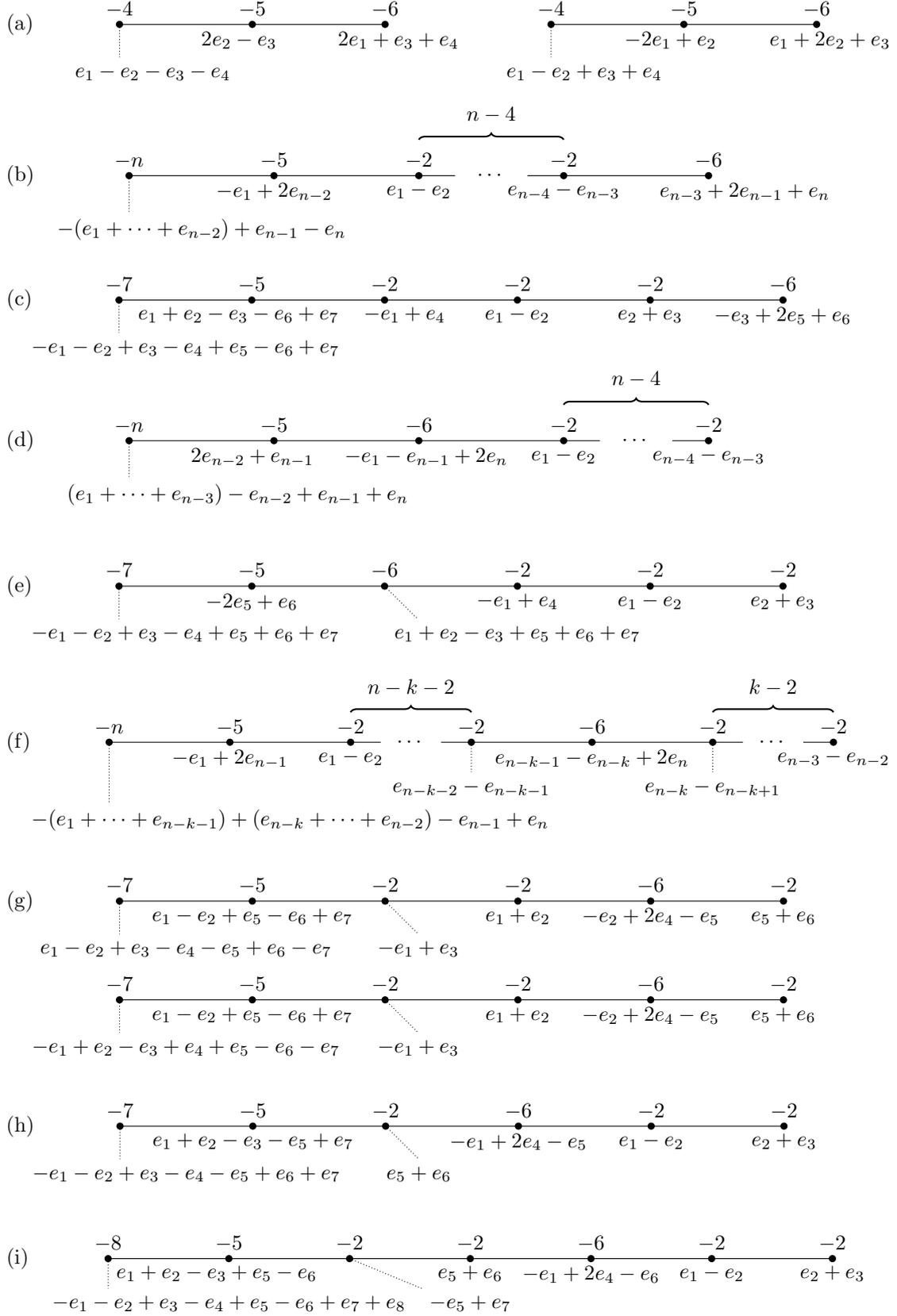

\clearpage
\bibliography{references}{}
\bibliographystyle{alpha}
\end{document}